\newcommand{\Oo}{\mathcal{O}}
\newcommand{\ZZ}{\mathbb{Z}}
\newcommand{\RR}{\mathbb{R}}
\newcommand{\R}{\mathbb{R}}
\newcommand{\QQ}{\mathbb{Q}}
\newcommand{\CC}{\mathbb{C}}
\newcommand{\TT}{\mathbb{T}}
\newcommand{\Z}{\mathbb{Z}}
\newcommand{\NN}{\mathbb{N}}
\newcommand{\vol}{\textup{vol}}
\newcommand{\inte}{\textup{int}\,}
\newcommand{\Ehr}{\textup{Ehr}}
\newcommand{\G}{\mathcal G}
\newcommand{\T}{\mathcal T}
\renewcommand{\S}{\mathcal S}
\DeclareMathOperator{\codim}{codim}
\DeclareMathOperator{\bx}{Box}
\newcommand{\om}{\omega}
\newtheorem{theorem}{Theorem}[section]
\newtheorem{lemma}[theorem]{Lemma}
\newtheorem{thm}[theorem]{Theorem}
\newtheorem{proposition}[theorem]{Proposition}
\newtheorem{prop}[theorem]{Proposition}
\newtheorem{corollary}[theorem]{Corollary}
\newtheorem{cor}[theorem]{Corollary}
\newtheorem{claim}[theorem]{Claim}
\newtheorem{definition}[theorem]{Definition}
\newtheorem{defn}[theorem]{Definition}
\newtheorem{remark}[theorem]{Remark}
\newtheorem{rem}[theorem]{Remark}
\newtheorem{example}[theorem]{Example}
\DeclareMathOperator{\Chi}{{\displaystyle{\chi}}}
\DeclareMathOperator{\Aff}{Aff}
\DeclareMathOperator{\conv}{conv}
\newcommand{\cat}{\mathcal}
\newcommand{\Xx}{{\cat X}}
\newcommand{\teta}{{\tilde{\eta}}}
\begin{document}
\title[Contact  invariants  and  the  Ehrhart  polynomial]{Contact invariants of $\QQ$-Gorenstein
toric contact manifolds, the Ehrhart polynomial and Chen-Ruan cohomology}

\author[M.~Abreu]{Miguel Abreu}
\address{Center for Mathematical Analysis, Geometry and Dynamical Systems,
Instituto Superior T\'ecnico, Universidade de Lisboa, 
Av. Rovisco Pais, 1049-001 Lisboa, Portugal}
\email{mabreu@math.tecnico.ulisboa.pt, macarini@math.tecnico.ulisboa.pt}

\author[L.~Macarini]{Leonardo Macarini}
 
\author[M.~Moreira]{Miguel Moreira}
\address {ETH Z\"urich, Department of Mathematics. Rämistrasse 101, 8092 Zürich, Switzerland.}
\email{miguel.moreira@math.ethz.ch}
\thanks{Partially funded by FCT/Portugal through UID/MAT/04459/2020. MA and LM were 
also funded through project PTDC/MAT-PUR/29447/2017. MM received funding from the European Research Council (ERC)
under the European Unions Horizon 2020 research and innovation programme
(ERC-2017-AdG-786580-MACI)}

\begin{abstract}
$\QQ$-Gorenstein toric contact manifolds provide an interesting class of examples of contact 
manifolds with torsion first Chern class. They are completely determined by certain rational convex 
polytopes, called toric diagrams, and arise both as links of toric isolated singularities and as 
prequantizations of monotone toric symplectic orbifolds. In this paper we show how the cylindrical 
contact homology invariants of a $\QQ$-Gorenstein toric contact manifold are related to
\begin{itemize}
\item the Ehrhart (quasi-)polynomial of its toric diagram;
\item the Chen-Ruan cohomology of any crepant toric orbifold resolution 
of its corresponding toric isolated singularity;
\item the Chen-Ruan cohomology of any monotone toric symplectic orbifold
base that gives rise to it through prequantization.
\end{itemize}
\end{abstract}

\maketitle

\section{Introduction}

$\QQ$-Gorenstein toric contact manifolds are good toric contact manifolds with torsion first Chern class and
can be thought of as the odd dimensional analogues of monotone toric symplectic manifolds.
While the latter are in 1-1 correspondence with reflexive Delzant polytopes (up to translation and scaling), 
the former are in 1-1 correspondence with toric diagrams, i.e. rational simplicial polytopes with unimodular facets
(cf. Definition~\ref{def:diagram}).

Given a toric diagram $D\subset \RR^n$ and corresponding $\QQ$-Gorenstein toric contact manifold
$(M_D^{2n+1}, \xi_D)$, any point $v$ in the interior of $D$ determines a toric contact form
$\alpha_\nu$ and toric Reeb vector field $R_\nu$ (cf. Definition~\ref{def:reeb}).  
If $\nu=(v,1)$ has $\QQ$-independent
coordinates, then the Reeb flow of $R_\nu$ is non-degenerate with finitely many simple closed orbits
which are in 1-1 correspondence with the facets of $D$. The (contact homology) degree of any
closed $R_\nu$-orbit is equal to its Conley-Zehnder index plus $n-2$. Note that in this context of
torsion (not necessarily zero) first Chern class, the Conley-Zehnder index is a well-defined rational
(not necessarily integer) number.

\begin{definition} \label{def:Betti_Euler}
Let $D\subset \RR^n$ be a toric diagram of order $m\in\NN$, i.e. such that $mD$ is an integral simplicial
polytope with unimodular facets.
The \emph{contact Betti numbers} $cb_{j/m} (D, \nu)$, $j\in\ZZ$ are defined by
\[
cb_{j/m} (D, \nu) = \text{number of closed $R_\nu$-orbits with contact homology degree $j/m$}.
\]
\end{definition}

When $D$ is a toric diagram of order $1$, i.e. when $(M_D^{2n+1}, \xi_D)$ is a toric contact manifold
with zero first Chern class (Gorenstein), we proved in~\cite{abreu2012contact, abreu2016mean, abreu2018contact} 
the following relevant results for this paper:
\begin{itemize}
\item An explicit formula to compute the Conley-Zehnder index of each closed $R_\nu$-orbit using only
data contained in the pair $(D,\nu)$.
\item The degree of any closed $R_\nu$-orbit is always even and so $cb_j (D, \nu) = 0$ when $j$ is odd.
\item $\inf\{j\in\ZZ\,:\ cb_j (D,\nu)\ne 0\}$ is finite.
\item The mean Euler characteristic
\[
\Chi (D, \nu) := \lim_{N\to\infty} \frac{1}{N} \sum_{j=-N}^{N} (-1)^j cb_{j} (D,\nu) =
 \lim_{N\to\infty} \frac{1}{2N} \sum_{j=0}^{N} cb_{2j} (D,\nu) 
\]
is given by
\begin{equation} \label{eq:meanEC}
\Chi (D, \nu) = \frac{n! \vol (D)}{2}\,.
\end{equation}
\end{itemize}

Note that~(\ref{eq:meanEC}) shows that the mean Euler characteristic $\Chi (D, \nu)$ depends only on the 
Gorenstein toric contact manifold $(M_D^{2n+1}, \xi_D)$, via the volume of its toric diagram $D$, and not on
the particular choice of $v$ in the interior of $D$. Our first main result in this paper shows that, in fact, for 
a toric diagram $D$ of any order $m\in\NN$ each contact Betti number $cb_{j/m} (D, \nu)$ is also independent 
of $\nu$ and can be combinatorially determined using the Ehrhart quasi-polynomial of $D$.

Recall that the Ehrhart (quasi-)polynomial of $D\subset\RR^n$ counts its number of rational points and is defined by
\[
L_D (t)=\#\left(D\cap \frac{1}{t}\ZZ^n\right)\,,\ t\in\NN\,.
\]
It is well known that $L_D (t)$ is a quasi-polynomial of degree $n$ and period $m$, i.e.
\[
L_D (t) = \sum_{k=0}^n  c_k (D, t) t^k \,,
\]
where each $c_k (D, t+m) = c_k (D, t)$, for all $t\in\NN$ and $k=0,\ldots,n$. Moreover, $c_0 (D,0) =1$ and 
$c_n (D,t) = \vol (D)$ for all $t\in\NN$. A result of Stanley \cite[Theorems 1.6, 1.7]{stanley} says that its coefficients in the
quasi-polynomial basis $\{\binom{\frac{t-k}{m} +n}{n} : k=0,\ldots,m(n+1)-1\}$ are non-negative 
integers, i.e.
\[
L_D (t) = \sum_{k \in [0, m(n+1)-1]\,,\  k\equiv t \mod m}  \delta_k (D) \binom{\frac{t-k}{m}+n}{n} \,, \ \text{with}\ \delta_k (D) \in \NN_0\,.
\]
Defining $\delta_k =0$ for $k<0$ and $k>m(n+1)-1$, we can now state our first main result.
\begin{theorem} \label{thm:invariants}
\[
cb_{2k} (D,\nu)  - cb_{2(k-1)} (D,\nu) = \delta_{m(n-k)} (D)\,.
\]
\end{theorem}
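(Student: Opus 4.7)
The plan is to extend the index-computation framework of \cite{abreu2012contact, abreu2016mean, abreu2018contact} from toric diagrams of order $1$ to arbitrary order $m$, and then to identify the resulting orbit count with Stanley's $\delta$-vector.

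First, I would compute the Conley--Zehnder index of each closed $R_\nu$-orbit explicitly in terms of toric data. Since every closed orbit is an iterate $\gamma_F^\ell$ of a simple orbit attached to a facet $F$ of $D$, adapting the Gorenstein formula of \cite{abreu2018contact} should yield $\mu_{CZ}(\gamma_F^\ell) \in \frac{1}{m}\ZZ$ as a function of $\ell$, of the interior point $v\in\inte D$, and of the primitive inward normals of the facets of $D$ adjacent to $F$. The denominator $m$ reflects the torsion of the first Chern class and underlies the $\frac{1}{m}$-graded structure of the contact Betti numbers.

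Next, I would repackage the orbit count as a lattice-point count in the cone $C(D) = \RR_{\geq 0}\cdot(D \times \{1\}) \subset \RR^{n+1}$. The iterates $\{\gamma_F^\ell\}$ are in natural bijection with the non-zero lattice points of $C(D)$, and the index formula from Step~1 should reduce under this bijection to a combinatorial expression for the contact homology degree of the orbit attached to $p \in \ZZ^{n+1}\cap C(D)$, written in terms of the height $p_{n+1}$ and a face-correction $\epsilon(p)$ that reads off which open face of $D$ contains the radial projection of $p$. Integer-degree orbits correspond to lattice points whose height is divisible by $m$, which is precisely why the theorem involves $\delta_j(D)$ only at indices $j = m(n-k)$. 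Because the bijection and the degree formula depend only on the lattice combinatorics of $C(D)$ and not on $\nu$, the $\nu$-independence of $cb_{2k}(D,\nu)$ falls out for free.

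The final step invokes Stanley's decomposition of the $h^*$-polynomial. For a simplicial rational polytope $D$ with unimodular facets, the standard half-open triangulation of $C(D)$ (one simplicial cone per facet of $D$) gives a decomposition of the cone whose lattice-point count at height $j/m$ is $\delta_j(D)$. Telescoping the count from Step~2 between successive heights then recovers $\delta_{m(n-k)}(D)$, proving the theorem. The main obstacle is Step~1: the order-$m$ Conley--Zehnder formula must be written in a symplectic trivialization adapted to $C(D)$, so that the face-correction $\epsilon(p)$ in the degree expression lines up cleanly with Stanley's half-open parallelepiped decomposition; once this bookkeeping is in place, Step~3 is an elementary application of the $h^*$-decomposition for simplicial polytopes with unimodular facets.
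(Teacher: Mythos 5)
Your overall architecture (order-$m$ Conley--Zehnder formula in toric coordinates, then a lattice-point count over $D$, then identification with Stanley's $\delta$-vector via generating functions) is the same as the paper's. But the load-bearing claim in your Step~2 is false, and it is exactly where the proof lives. The iterates $\{\gamma_F^\ell\}$ are \emph{not} in bijection with the non-zero lattice points of $C(D)=\RR_{\geq 0}\cdot(D\times\{1\})$: the orbits form the set $\{\text{facets of }D\}\times\NN$, which grows linearly in the iterate, while the lattice points of the $(n+1)$-dimensional cone $C(D)$ at height $t$ grow like $t^{n}$. (A bijection does exist between orbit iterates and non-zero lattice points on the \emph{edges of the moment cone} $C=\sigma^\vee$, but that set is too small to see the $\delta$-vector.) What actually happens — and what the paper proves — is a fibered correspondence: star-subdivide $D$ at the Reeb point $v$ into simplices $D_\ell=\conv(v,v_{\ell_1},\ldots,v_{\ell_n})$, one per facet; then for each $t$ the rational points of $\inte D_\ell\cap\frac1t\ZZ^n$ are stratified by a level $N\in\NN$, and the stratum of level $N$ has cardinality $\binom{\frac{t}{m}-\frac12\deg\gamma_\ell^N+n-2}{n-1}$, i.e.\ each orbit iterate owns an entire $(n-1)$-dimensional simplex of lattice points whose size is governed by its contact homology degree. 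Summing and passing to generating functions yields
\[
\Ehr_{\inte D}(z)=\frac{1}{(1-z^m)^{n}}\Bigl(\sum_{j\in\frac1m\ZZ}cb_{2j}\,z^{mj}\Bigr),
\]
and comparing with $\Ehr_{\inte D}(z)=\frac{1}{(1-z^m)^{n+1}}\bigl(\sum_j\delta_{mn-j}z^{j}\bigr)$ (Ehrhart reciprocity) gives precisely the first difference $cb_{2k}-cb_{2(k-1)}=\delta_{m(n-k)}$.

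Note that the first-difference structure of the statement is incompatible with a bijection: it arises because the orbit contributions carry the factor $1/(1-z^m)^{n}$ while the $\delta$-vector sits over $1/(1-z^m)^{n+1}$, so the extra factor $(1-z^m)$ is what your ``telescoping'' has to come from — you cannot get it by merely sorting lattice points by height and degree. Relatedly, the $\nu$-independence does not ``fall out for free'': the decomposition $D=\bigcup_\ell D_\ell$ and the levels $N$ all depend on $v$, and independence is only a corollary of the final identification with $\delta(D)$, which is manifestly $\nu$-independent. Your Step~1 is fine (it is formula~(\ref{eq:CZindexQ}) of the paper), and your Step~3 correctly anticipates that the star subdivision at an interior point is the right combinatorial device; what is missing is the actual counting lemma tying $\deg\gamma_\ell^N$ to the simplex of solutions of $\sum_i m_i=\frac{t}{m}-\frac12\deg\gamma_\ell^N-1$ in non-negative integers, without which Steps~2 and~3 do not connect.
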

\noindent Hence, the contact Betti numbers are indeed independent of $\nu$ and from now on we will simply denote them 
by $cb_j (D)$. 

Note that a toric Reeb vector $\nu$ corresponds to a point $v$ in the interior of $D$, see Proposition \ref{prop:reeb}. The basic idea behind the proof of Theorem \ref{thm:invariants}, which we give in Section \ref{sec: chfromehrhart}, is to use the star subdivision of $D$ centered at $v$ and write the number of rational points in the interior of $D$ as a sum of the numbers of rational points in the interior of each simplex of maximal dimension appearing in the subdivision (the condition that $\nu$ is non-degenerate implies that there are no rational points in lower dimensional simplices). Roughly speaking, the number of rational points in each simplex is related to the contributions to contact homology of the multiples of the simple Reeb orbit associated to the corresponding facet of $D$. 

Theorem~\ref{thm:invariants} implies that, given $j \in \frac{1}{m}\ZZ \cap \left]-1,0\right]$, we have
\begin{equation} \label{eq:invariants}
cb_{2j + 2a} (D) = \sum_{k=0}^{a} \delta_{m(n-j-k)} (D) \,,\ a \in \NN_0\,.
\end{equation}
It also has the following combinatorial consequences (cf. Theorem~\ref{thm:combinatorial} and Remark~\ref{rem:combinatorial}).
\begin{corollary} 
\quad
\begin{itemize}
\item[(1)] 
\[
cb_0 (D) = \#\left(\inte (mD)\cap \ZZ^n\right) \,.
\]
\item[(2)]
\[
cb_{2(n-1)} (D) = n! \vol (mD) -1 \,.
\]
\item[(3)]
\[
cb_{2j+2a} (D) = n! \vol (mD)\,,\ \forall\ j \in \frac{1}{m}\ZZ \cap \left]-1,0\right]\,,\ a\geq n \,.
\]
\end{itemize}
\end{corollary}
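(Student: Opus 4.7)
The plan is to derive all three parts directly from formula~(\ref{eq:invariants}), after recording two elementary identities for Stanley's $\delta$-vector of $D$.

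\emph{Two preliminary identities.} Evaluating Stanley's expansion at $t=0$, only the index $k=0$ contributes, because $\binom{n-\ell}{n}=0$ for $1\leq \ell\leq n$; therefore
\[
\delta_0(D) = L_D(0) = 1.
\]
Second, the leading coefficient $c_n(D,t)=\vol(D)$ is independent of $t$, and the leading term of each basis element $\binom{(t-k)/m+n}{n}$ is $t^n/(n!\,m^n)$. Matching leading coefficients residue-class by residue-class gives, for every $r\in\{0,1,\ldots,m-1\}$,
\[
\sum_{\substack{0\leq k\leq m(n+1)-1\\ k\equiv r \mod m}} \delta_k(D) \;=\; n!\,m^n \vol(D) \;=\; n!\,\vol(mD).
\]

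\emph{Parts (3) and (2).} For part (3), write $j=-r/m$ with $0\leq r<m$; then~(\ref{eq:invariants}) reads $cb_{2j+2a}(D) = \sum_{k=0}^{a}\delta_{mn+r-mk}(D)$. Since $\delta_\ell=0$ for $\ell<0$ and $a\geq n$, only $k=0,\ldots,n$ contribute, producing exactly the $n+1$ representatives $\{r,\,m+r,\ldots,mn+r\}$ of residue $r \mod m$ inside $[0,m(n+1)-1]$; the second identity then gives $n!\,\vol(mD)$. Part (2) is the same sum with $j=0$ and $a=n-1$, which omits only the $\ell=0$ term, so $cb_{2(n-1)}(D) = n!\,\vol(mD) - \delta_0(D) = n!\,\vol(mD) - 1$.

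\emph{Part (1) and main obstacle.} Setting $j=0$, $a=0$ in~(\ref{eq:invariants}) gives $cb_0(D)=\delta_{mn}(D)$, so the problem reduces to the purely Ehrhart-theoretic identity $\delta_{mn}(D) = \#(\inte(mD)\cap\ZZ^n)$. The clean route is to combine Ehrhart--Macdonald reciprocity $\#(\inte(mD)\cap\ZZ^n) = (-1)^n L_D(-m)$ with the polynomial branch of $L_D$ on the residue class $0 \mod m$, evaluated at $t=-m$: every $\binom{n-1-\ell}{n}$ vanishes for $0\leq\ell\leq n-1$, while $\binom{-1}{n}=(-1)^n$, so $L_D(-m) = (-1)^n \delta_{mn}(D)$. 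There is no substantial obstacle; the only delicate point is bookkeeping, namely verifying that Stanley's range $0\leq k\leq m(n+1)-1$ captures exactly $n+1$ representatives of each residue class, so that no $\delta$-coefficient is double-counted or missed in the telescoping sums.
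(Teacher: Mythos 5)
Your proposal is correct and follows essentially the same route as the paper: all three parts are read off from the telescoped formula~(\ref{eq:invariants}), with (3) obtained by matching leading coefficients of the branch polynomials (the paper phrases this as $\frac{1}{n!}\sum_{i=0}^n\delta_{m(n-j-i)}$ being the leading coefficient of $L_\Delta(mt-mj)$), (2) from $\delta_0=1$, and (1) from $cb_0=\delta_{mn}$ together with Ehrhart reciprocity, which is exactly what the paper's equation~(\ref{ehrinterior}) at $z=0$ encodes. The bookkeeping you flag (each residue class having exactly $n+1$ representatives in $[0,m(n+1)-1]$) is handled correctly.
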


\noindent In subsection~\ref{ex:torsion} we give an example of Conley-Zehnder index and Ehrhart quasi-polynomial
computations, illustrating well the non-trivial content of Theorem~\ref{thm:invariants} and its consequences.

Theorem~\ref{thm:invariants} implies that the contact Betti numbers $cb_j (D)$ are toric contact invariants
of $(M_D^{2n+1}, \xi_D)$. In fact, one should be able to remove the word ``toric" and say that each contact Betti 
number $cb_j (D)$ is a contact invariant of $(M_D^{2n+1}, \xi_D)$, the rank of its degree $j$ cylindrical
contact homology $HC_j (M_D, \xi_D)$. Unfortunately, and despite recent foundational developments 
(e.g.~\cite{pardon2016,pardon2019}), cylindrical contact homology has not been proved to be a well defined 
invariant in the presence of contractible closed Reeb orbits, even in this restricted context of $\QQ$-Gorenstein 
toric contact manifolds. Hence, when we write $HC_j (M_D, \xi_D)$ in this paper we are just using a suggestive
notation for $\QQ^{cb_j(D)}$, i.e.
\[
HC_j (M_D, \xi_D) := \QQ^{cb_j(D)}\,.
\]
However, note that there are at least two particular contexts where this is more than suggestive notation:
\begin{itemize}
\item[(ii)] Gorenstein toric contact manifolds that have a non-degenerate toric contact form with all 
of its closed contractible Reeb orbits having Conley-Zehnder index strictly greater than $3-n$, i.e. 
contact homology degree strictly greater than $1$.
\item[(i)] Gorenstein toric contact manifolds that have crepant (i.e. with zero first Chern class) toric 
symplectic fillings.
\end{itemize}
Indeed, in both of these contexts we can use positive equivariant symplectic homology to conclude 
that the contact Betti numbers are contact invariants. For (i), one considers the
positive equivariant symplectic homology of the symplectization and the work of Bourgeois and 
Oancea~\cite[section 4.1.2]{oancea2014linearized}. For (ii), one considers the positive equivariant symplectic homology 
of the filling and recent work of McLean and Ritter~\cite{mclean2018mckay}, which uses previous work 
of Kwon and van Koert~\cite{kwonkoert} (see~\cite[Remark 1.4]{abreu2018contact} and~\cite[section 2]{abreu2019}). 

As we discuss in detail in section~\ref{resolutions}, a crepant toric symplectic filling of a $\QQ$-Gorenstein toric contact
manifold $(M_D, \xi_D)$ is constructed using the following two ingredients:
\begin{itemize}
\item[(i)] A triangulation $\T$ of $D$ and its corresponding fan $\Sigma$.
\item[(ii)] A strictly convex support function $\varphi$ on $\Sigma$ (cf. Definition~\ref{def:support}).
\end{itemize}
This gives rise to a crepant toric symplectic filling $(X_\Sigma, \omega_\varphi)$ of $(M_D, \xi_D)$, where $X_\Sigma$
is the toric variety associated to $\Sigma$ and $\omega_\varphi$ is a toric symplectic form on $X_\Sigma$ determined
by $\varphi$ (cf. Proposition~\ref{prop:symplectic}). $X_\Sigma$ is smooth whenever $\T$ is a unimodular triangulation
of $D$, which necessarily implies that $D$ has order $m=1$, otherwise it has orbifold singularities. In fact, as stated and proved in section~\ref{resolutions} 
(cf. Proposition~\ref{prop:filling}), every compact $\QQ$-Gorenstein toric contact manifold $(M_D, \xi_D)$ admits a not-necessarily
smooth crepant symplectic filling by a toric orbifold $(X_\Sigma, \omega_\varphi)$.

The Chen-Ruan cohomology of $X_\Sigma$ is a $\QQ$-graded ring $H^\ast_\textup{orb}(X_\Sigma)$ (cf.~\cite{chen2004new} 
and appendix~\ref{appendix}). Batyrev-Dais~\cite{batyrev1996strong} (in the smooth case) and Stapledon~\cite{stapledon} (cf. appendix~\ref{appendixB}) showed that
\[
\dim H^{2j}_{\textup{orb}}(X_\Sigma; \QQ)=\delta_{mj} \ \text{for}\ j \in \frac{1}{m} \NN_{0}\,,
\]
and the remaining orbifold cohomology groups are trivial. This can be combined with Theorem~\ref{thm:invariants} to give
the following result (cf. Corollary~\ref{isostapledon}).
\begin{corollary} \label{intro_isostapledon}
\[
cb_{2j} (D) =  \sum_{k\geq 0}  \dim H^{2n-2j+2k}_\textup{orb}(X_\Sigma; \QQ)\,,\ \forall j\in\QQ\,.
\]
\end{corollary}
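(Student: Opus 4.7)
The plan is to combine Theorem~\ref{thm:invariants}, in its telescoped form~(\ref{eq:invariants}), with the Batyrev--Dais--Stapledon identity $\dim H^{2\ell}_{\textup{orb}}(X_\Sigma;\QQ)=\delta_{m\ell}(D)$, which is valid for $\ell\in\frac{1}{m}\NN_0$ and zero otherwise. Both ingredients are already available, so the corollary will reduce to a re-indexing together with a finite-support check.

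First I would reduce to $j\in\frac{1}{m}\ZZ$: for any other $j\in\QQ$ the contact Betti number $cb_{2j}(D)$ is zero by definition, and every term on the right vanishes because $n-j+k$ never lies in $\frac{1}{m}\NN_0$. Writing $j=j_0+a$ with $j_0\in\frac{1}{m}\ZZ\cap\left]-1,0\right]$ and $a\in\ZZ$, the case $a<0$ is also trivial on both sides: on the left by the lower bound on the non-vanishing contact Betti numbers; on the right because the indices $m(n-j+k)$ all exceed $m(n+1)-1$ and hence lie above the support of $\delta$.

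For $a\in\NN_0$, formula~(\ref{eq:invariants}) gives, after the change of variable $\ell=a-k$,
\[
cb_{2j}(D)=\sum_{k=0}^{a}\delta_{m(n-j_0-k)}(D)=\sum_{\ell=0}^{a}\delta_{m(n-j+\ell)}(D).
\]
The key observation is that extending the upper limit to $\infty$ adds only zero terms: for $\ell>a$ one has $n-j+\ell>n-j_0\ge n$, hence $m(n-j+\ell)\ge m(n+1)$, which lies outside the support of $\delta_r(D)$. Applying Stapledon's identity term by term then yields
\[
cb_{2j}(D)=\sum_{\ell\ge 0}\delta_{m(n-j+\ell)}(D)=\sum_{\ell\ge 0}\dim H^{2(n-j+\ell)}_{\textup{orb}}(X_\Sigma;\QQ),
\]
which is the claimed identity after setting $k=\ell$.

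The only potential obstacle is bookkeeping: verifying that the support ranges of $\delta_r(D)$ on the combinatorial side and of $H^{2\ell}_{\textup{orb}}(X_\Sigma;\QQ)$ on the geometric side line up uniformly across the re-indexing, and that the boundary cases $j\notin\frac{1}{m}\ZZ$ and $a<0$ can be disposed of without special treatment. No further geometric or topological input is needed beyond the two theorems being combined.
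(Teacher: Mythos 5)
Your proof is correct and follows essentially the same route as the paper: both sides are identified, via Theorem~\ref{thm:invariants} and Theorem~\ref{stapledon}, with the same finite sum of $\delta$-coefficients, the only difference being that you spell out the re-indexing and the vanishing of terms outside the support of the $\delta$-vector, which the paper leaves implicit.
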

\begin{remark}
For $m=1$ and smooth $X_\Sigma$ we provide in subsection~\ref{ss:isostapledon} a direct symplectic proof of this result
using symplectic (co)homology of $X_\Sigma$ and its positive/negative and $S^1$-equivariant versions.
\end{remark}
\begin{remark}
McLean and Ritter~\cite{mclean2018mckay} have a similar result for isolated finite quotient singularities, which
overlaps with Corollary~\ref{intro_isostapledon} when $M_D$ is a lens space, i.e. when $D$ is a simplex.
\end{remark}

Given an integral toric diagram $D\subset\RR^n$ and corresponding Gorenstein toric contact manifold
$(M_D^{2n+1}, \xi_D)$, take a rational point $v=w/r\in\QQ^n$ in the interior of $D$, with $w\in \ZZ^n$ and $r\in \ZZ^+$. Let $\nu=(w,r)\in \ZZ^{n+1}$ 
and consider the corresponding
contact form $\alpha_\nu$ and toric Reeb vector field $R_\nu$. The Reeb flow of $R_\nu$ is periodic 
and induces an almost free $S^1$-action on $M_D$. The quotient is a monotone compact symplectic
toric orbifold $(B_\nu := M_D/S^1, \omega_\nu)$, where $\pi^\ast \omega_\nu = d\alpha_\nu$ with
$\pi : M_D \to B_\nu$ the quotient projection. In other words, $(M_D, \xi_D)$ is the prequantization of
$(B_\nu, \omega_\nu)$. While monotone compact symplectic toric manifolds are in 1-1 correspondence 
with Delzant polytopes $\Delta$ such that $r\Delta$ is a reflexive polytope (up to translation) for 
some $r\in\RR^+$, monotone compact symplectic toric orbifolds $(B, \omega)$ are in 1-1 correspondence 
with labelled simple rational polytopes $\Delta$ such that $r\Delta$ is an almost-reflexive polytope (up 
to translation) for some $r\in\RR^+$. When $B=B_\nu$ for some $\nu$ as above,
we have that $r=r_\nu \in\NN$ and when $B$ is smooth we have that $c_1 (TB) = r [\omega] \in H^2 (B;\ZZ)$.
See section~\ref{baseorbifold} for more details regarding these facts.

Our third main result in this paper shows how the contact Betti numbers of a Gorenstein $(M_D, \xi_D)$ 
are determined by the Chen-Ruan (orbifold) cohomology of any such quotient $(B_\nu, \omega_\nu)$, once 
one takes into account its decomposition by twisted sectors and the corresponding degree shifting numbers.  
See appendix~\ref{appendix} for details regarding Chen-Ruan cohomology of toric orbifolds, twisted sectors 
and degree shifting numbers. 

Let 
\begin{equation}\label{eq: baseorbifolddecompositionintro}H_{orb}^\ast(B_\nu; \QQ)=\bigoplus_{0<T\leq 1} F_T^\ast(B_\nu)
\end{equation}
be the decomposition of the orbifold cohomology of $B_\nu$ obtained from representing $B_\nu$ as $M/S^1$ 
where the action of $S^1$ is induced by the flow of $-R_\nu$ (note that, although $R_\nu$ and $-R_\nu$ have 
the same periodic orbits up to orientation, they have different degree shifting numbers, with the later being the appropriate ones in 
this context, cf. Lemma~\ref{lem: twistedsectorsbox}). The summand $F_T^\ast(B_\nu)$ is the contribution of 
the fixed points of $e^{2\pi iT}\in S^1$ 
to the orbifold cohomology. Note in particular that $F_1(B_\nu)=H^\ast(B_\nu; \QQ)$ is the singular cohomology of $B_\nu$.

\begin{theorem} \label{thm:quotients}
Let $(M_D^{2n+1}, \xi_D)$ be a Gorenstein toric contact manifold, $w/r\in\QQ^n \cap \inte (D)$ with corresponding toric Reeb vector $\nu=(w,r)$, and
$(B_\nu = M_D/S^1, \omega_\nu)$ be as described above. Consider the Chen-Ruan 
cohomology of $B_\nu$ and its decomposition \eqref{eq: baseorbifolddecompositionintro}.

Then the contact homology of $(M_D, \xi_D)$ is given as a graded vector space by
\begin{equation} \label{eq:quotients}
\textup{HC}_\ast (M_D, \xi_D) = \bigoplus_{k\geq 0} \ \ \bigoplus_{0<T\leq 1} 
F_T^{\ast-2r_\nu T+2-2r_\nu k} (B_\nu)  \,.
\end{equation}
In particular, the right hand side is independent of $w/r\in\QQ^n \cap \inte (D)$.
\end{theorem}

\begin{remark}
Although a result of this type should also hold for $\QQ$-Gorenstein toric contact manifolds, it seems to
be harder to formulate precisely and we will not do it in this paper.
\end{remark}

When $B_\nu$ is smooth there is only the $T=1$ (non-)twisted sector with $(B_\nu)_1 = B_\nu$.
Hence, we get as a corollary the following result:

\begin{corollary} \label{cor:Bourgeois}
When $B_\nu$ is smooth we have that
\[
\textup{HC}_\ast (M_D, \xi_D) = \bigoplus_{k\geq 0} H^{\ast - 2r_\nu k - 2 (r_\nu -1)} (B_\nu ; \QQ)\,.
\]
\end{corollary}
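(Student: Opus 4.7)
The plan is to deduce the corollary directly from Theorem~\ref{thm:quotients} by showing that, under the smoothness hypothesis, only the untwisted sector contributes to the right hand side of~(\ref{eq:quotients}). More precisely, I would first recall that the twisted sectors of $B_\nu = M_D/S^1$ indexed by $T\in (0,1]$ correspond to subsets $(M_D)^T \subset M_D$ fixed by the element $T\in S^1 = \RR/\ZZ$ acting via the Reeb circle action. The sector $T=1$ is always present, since it corresponds to the identity and gives $(B_\nu)_1 = B_\nu$ itself, with degree shifting number $\iota_1 = 0$ and $F_1^\ast(B_\nu) = H^\ast(B_\nu; \QQ)$.

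The key observation is that $B_\nu$ is smooth precisely when the Reeb $S^1$-action on $M_D$ is free (as opposed to only almost free), which is equivalent to all stabilizer groups being trivial. Consequently $(M_D)^T = \emptyset$ for every $T \in (0,1)$, and only the $T=1$ sector survives in the decomposition of the Chen-Ruan cohomology. This reduction is the one nontrivial point in the argument, but it is immediate from the standard description of orbifold twisted sectors recalled in appendix~\ref{appendix}, applied to the quotient presentation of $B_\nu$.

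Given this, the proof is then a direct substitution. Setting $T=1$ and $\iota_1 = 0$ in~(\ref{eq:quotients}) gives
\[
\textup{HC}_\ast (M_D, \xi_D) = \bigoplus_{k \geq 0} F_1^{\ast - 2r_\nu + 2 - 2r_\nu k}(B_\nu) = \bigoplus_{k \geq 0} H^{\ast - 2(r_\nu - 1) - 2r_\nu k}(B_\nu; \QQ)\,,
\]
which is exactly the statement of the corollary. No further work is needed.

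Overall the proof is short, and the only subtlety worth highlighting is the identification of smoothness of $B_\nu$ with freeness of the Reeb $S^1$-action, and hence with the vanishing of all twisted sectors other than the identity sector.
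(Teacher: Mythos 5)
Your proposal is correct and follows essentially the same route as the paper, which likewise notes that smoothness of $B_\nu$ (i.e.\ freeness of the Reeb circle action) leaves only the $T=1$ sector, with $(B_\nu)_1=B_\nu$ and $\iota_1=0$, and then substitutes into~(\ref{eq:quotients}). The degree bookkeeping $\ast-2r_\nu+2-2r_\nu k=\ast-2(r_\nu-1)-2r_\nu k$ is also as in the paper.
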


Since $\textup{HC}_\ast (M_D, \xi_D)$ can only be different from zero when $\ast$ is even, when $r_\nu = 1$
we get from~(\ref{eq:quotients}) that
\[
F_T^q (B_\nu) \ne 0 \quad\Rightarrow\quad q = \,\text{even}\,+ 2 (1-T)\,.
\]
Hence,
\[
\bigoplus_{0<T\leq 1}  F_T^{\ast-2k+2(1-T)} (B_\nu) =
\bigoplus_{\ast -2k \leq q < \ast -2k+2} \ \  \bigoplus_{0<T\leq 1}  F_T^{q} (B_\nu) =
\bigoplus_{\ast -2k \leq q < \ast -2k+2} H_{orb}^q (B_\nu; \QQ)\,.
\]
Considering a modified grading in the orbifold cohomology of $B_\nu$ by rounding the degree down to the
nearest even integer (note that the orbifold cohomology is in general $\QQ$-graded), that is
\[
H_{\lfloor orb \rfloor}^{2j} (B_\nu;\QQ) := \bigoplus_{2j\leq q < 2j+2} H_{orb}^q (B_\nu)\,,
\]
we then get the following corollary of Theorems~\ref{thm:invariants} and~\ref{thm:quotients}.

\begin{corollary} \label{cor:Stapledon}
When $r_\nu =1$ we have that
\[
\textup{HC}_\ast (M_D, \xi_D) = \bigoplus_{k\geq 0} H_{\lfloor orb \rfloor}^{\ast - 2k} (B_\nu ; \QQ)
\]
and (cf. Theorem 4.3 in~\cite{stapledon})
\[
\delta_{n-k} (D) = \dim H_{\lfloor orb \rfloor}^{2k} (B_\nu ; \QQ) =
\sum_{2k\leq q < 2k+2} \dim H_{orb}^q (B_\nu; \QQ)\,,\ k\in\ZZ\,.
\]
In particular, the right hand sides are independent of $\nu$.
\end{corollary}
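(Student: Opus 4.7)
The plan is to obtain both assertions as direct consequences of Theorems~\ref{thm:invariants} and~\ref{thm:quotients}; most of the first displayed equation is in fact already assembled in the paragraphs immediately preceding the statement.

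For the first equation, I would substitute $r_\nu = 1$ into~(\ref{eq:quotients}) of Theorem~\ref{thm:quotients}, obtaining
\[
\textup{HC}_\ast (M_D, \xi_D) = \bigoplus_{k\geq 0} \ \bigoplus_{0<T\leq 1} F_T^{\ast - 2k + 2(1-T)}(B_\nu).
\]
The parity observation recorded just before the corollary, namely that $F_T^q(B_\nu)\neq 0$ forces $q = \text{even} + 2(1-T)$, has the effect that as $T$ ranges over $(0,1]$ the exponent $\ast - 2k + 2(1-T)$ sweeps across the half-open interval $[\ast - 2k,\, \ast - 2k+2)$, and the parity constraint uniquely identifies the value of $T$ contributing at each admissible height. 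Hence the direct sum over $T$ regroups as
\[
\bigoplus_{\ast - 2k \leq q < \ast - 2k + 2} H^q_{orb}(B_\nu;\QQ) = H^{\ast - 2k}_{\lfloor orb \rfloor}(B_\nu;\QQ)
\]
by the definition of $H^\ast_{\lfloor orb \rfloor}$, yielding the first identity.

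For the $\delta$-identity I would combine the first part with Theorem~\ref{thm:invariants}. In the Gorenstein case $m=1$, Theorem~\ref{thm:invariants} specializes to the recursion $cb_{2k}(D) - cb_{2(k-1)}(D) = \delta_{n-k}(D)$. Using the convention $cb_j(D) = \dim \textup{HC}_j(M_D,\xi_D)$ together with the first part gives
\[
cb_{2k}(D) = \sum_{j\geq 0} \dim H^{2k-2j}_{\lfloor orb \rfloor}(B_\nu;\QQ),
\]
and subtracting the analogous expression at level $k-1$ yields a telescoping sum equal to $\dim H^{2k}_{\lfloor orb \rfloor}(B_\nu;\QQ)$. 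Independence of $\nu$ then follows immediately, as $\delta_{n-k}(D)$ depends only on the combinatorics of $D$.

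The only step requiring genuine bookkeeping—rather than a substantial obstacle—is verifying that the reindexing of the direct sum over $T$ produces a bijection with the half-open degree interval $[\ast - 2k,\, \ast - 2k+2)$. This relies on the explicit form of the degree shifting numbers $\iota_T$ for the Chen-Ruan cohomology of the toric orbifold $B_\nu$ (recalled in the appendix) together with the parity constraint coming from the fact that all closed $R_\nu$-orbits have even contact homology degree. Once this identification is confirmed, the rest of the corollary is formal.
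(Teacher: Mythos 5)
Your proposal is correct and follows essentially the same route as the paper: the first identity is exactly the parity-and-regrouping argument spelled out in the paragraph preceding the corollary (and reappearing as the ``alternative proof'' of Theorem~\ref{isostapledon2}), and the $\delta$-identity is the telescoping of $cb_{2k}-cb_{2(k-1)}=\delta_{n-k}$ from Theorem~\ref{thm:invariants}. The only difference is that the paper's primary proof of Theorem~\ref{isostapledon2} runs the logic in the opposite order, first obtaining the $\delta$-identity by citing Stapledon's Theorem~4.3 via stacky fans; your derivation avoids that external input entirely.
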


We remark that Theorem \ref{thm:quotients} and its corollaries should be interpreted as a combinatorial 
Morse-Bott theorem for contact homology \cite{bourgeois2002morse} in our toric setting. When we consider 
a Reeb vector $\nu$ with integer coordinates, the corresponding contact form is Morse-Bott in the sense of 
\cite[Definition 1.7]{bourgeois2002morse}. The critical manifolds are $M^T/S^1$ for every $T>0$, where 
$M^T$ is the fixed point set of $e^{2\pi iT}\in S^1$ . On the other hand, the inertia orbifold of $B$ is precisely 
the union of $M^T/S^1$ with $T\in ]0,1]$, so a relation between the orbifold cohomology of $B$ and the contact 
homology of $M$ is expected. Note that Corollary \ref{cor:Bourgeois} essentially appears in Bourgeois' thesis 
\cite[Proposition 9.1]{bourgeois2002morse}.

To avoid the aforementioned issues of contact homology not being proven to be a well-defined invariant, our proof of Theorem \ref{thm:quotients} will be entirely combinatorial, relying on Theorem~\ref{thm:invariants}. As remarked previously, the fact that we can relate the Ehrhart series of $D$ to the contact Betti numbers $cb_j(D, \nu)$ of a non-degenerate Reeb $\nu$ is explained by summing the counts of rational points in each simplex of maximal dimension in the star subdivision centered at $v$. When $v=w/r$ is a rational point, we can still calculate the Ehrhart series of $D$ by considering the star subdivision centered at $v$, but now counting rational points in every simplex and not just the maximal ones, cf. Lemma \ref{lem: ehrhartstartriangulation}. This leads to a connection between the Ehrhart series of $D$ and the orbifold cohomology of $B$ that we will use in the proof of Theorem \ref{thm:quotients}.
\subsection{Example}

Let us now see how these results apply  to an illustrative Gorenstein ($m=1$) example: the $5$-dimensional lens space
\[
L^5_3 (1,1,1) := (S^5/\ZZ_3, \xi_{std})\,.
\]
Its toric diagram $D\subset\RR^2$ is given by the simplex
\[
D = \text{conv} \, ((1,0), (0,1), (-1,-1)),
\]
with Ehrhart polynomial
\[
L_D (t) = \frac{1}{2} (3t^2 + 3t + 2) = \frac{(t+2)(t+1)}{2} + \frac{(t+1)t}{2} + \frac{t (t-1)}{2}\,.
\]
Hence,
\[
\delta_0 (D) = \delta_1 (D) = \delta_2 (D) = 1
\]
and we get from Theorem~\ref{thm:invariants} that the contact Betti numbers of $L^5_3 (1,1,1)$ are
\[
cb_\ast (L^5_3 (1,1,1))  =
\begin{cases}
1 &\textup{ if }\ast=0,\\
2 &\textup{ if }\ast=2,\\
3 &\textup{ if }\ast=\textup{even}\geq4,\\
0 &\textup{ otherwise.}
\end{cases}
\]

The triangulation of $D$ given by $\T = D$ corresponds to the crepant orbifold filling of
$L^5_3 (1,1,1) = S^5 / \ZZ_3$ by the corresponding quotient of the ball $B^6 / \ZZ_3$,
with an isolated orbifold singularity at the origin. In fact, for this triangulation we have that
$X_\Sigma = \CC^3 / \ZZ_3$ and
\[
H^\ast_{orb} (\CC^3 / \ZZ_3 ; \QQ) = F_1^\ast (\CC^3 / \ZZ_3) \oplus F_{1/3}^\ast (\CC^3 / \ZZ_3) 
\oplus F_{2/3}^\ast (\CC^3 / \ZZ_3)
\]
with
\[
F_1^\ast (\CC^3 / \ZZ_3) = H^\ast (pt)\,,\ F_{1/3}^\ast (\CC^3 / \ZZ_3) = H^{\ast-2} (pt) \ \text{and} \ 
F_{2/3}^\ast (\CC^3 / \ZZ_3) = H^{\ast-4} (pt)\,.
\]
Hence
\[
H_{orb}^\ast (\CC^3 / \ZZ_3; \QQ) =
\begin{cases}
\QQ &\textup{ if }\ast=0,2,4,\\
0 &\textup{ otherwise.}
\end{cases}
\]
Using Corollary~\ref{intro_isostapledon} we then have the following table giving the contributions
of $F_1$, $F_{1/3}$ and $F_{2/3}$ to the rank of $HC_\ast (L^5_3 (1,1,1) )$:
\begin{equation*}
\begin{array}{|c|c|c|c|c|c|cl} \hline
  \ast =  & \quad   0 \quad  & \quad 2 \quad & \quad 4 \quad & \quad 6 \quad & \quad 8 \quad & 
  \cdots
\\ 
\hline
F_1   &     0      &   0    &   1  &  1 & 1 & \cdots
\\ 
\hline 
F_{1/3}   &     0      &    1    &   1  &  1 & 1 & \cdots
\\ 
\hline
F_{2/3}  &     1      &    1    &   1  &  1 & 1 & \cdots
\\ 
\hline 
cb_\ast (L^5_3 (1,1,1)) &   1   & 2 & 3 & 3 & 3 & \cdots
\\
\hline
\end{array}
\end{equation*}

The triangulation of $D$ given by its barycentric subdivision, with $(0,0)$ the barycenter, is unimodular
and corresponds to the crepant smooth filling given by $X_\Sigma =$ total space of the line bundle $\Oo (-3)$ 
over $\CC P^2$. Hence
\[
H_{orb}^\ast (X_\Sigma ; \QQ) = H^\ast (X_\Sigma ; \QQ) = H^\ast (\CC P^2 ; \QQ) = 
\begin{cases}
\QQ &\textup{ if }\ast=0,2,4,\\
0 &\textup{ otherwise,}
\end{cases}
\]
and one can again use Corollary~\ref{intro_isostapledon} to determine $HC_\ast (L^5_3 (1,1,1) )$.
Note that this $X_\Sigma$ is the resolution of $\CC^3 / \ZZ_3$ and, as expected,
\[
H_{orb}^\ast (\CC^3 / \ZZ_3 ; \QQ) \cong H^\ast (X_\Sigma ; \QQ)\,.
\]

The total space of the line bundle $\Oo(-3)$ over $\CC P^2$ also describes how $L^5_3 (1,1,1)$ arises
as the prequantization of $(\CC P^2, 3\omega_{FS})$, corresponding to the choice of $\nu = (0,0)$,
$B_{(0,0)} = \CC P^2$ and $r_{(0,0)}=1$. Hence, in this case, both Corollary~\ref{cor:Bourgeois}  (with $r_\nu = 1$)
and Corollary~\ref{cor:Stapledon} (with $B_\nu$ smooth) determine $HC_\ast (L^5_3 (1,1,1) )$ in exactly the same
way as Corollary~\ref{intro_isostapledon}.

To illustrate an application of Theorem~\ref{thm:quotients} in this example, consider 
\[
\nu = \left(-\frac{1}{2},-\frac{1}{2}\right) \in \inte (D)\,.
\]
As we will see in subsection~\ref{topexample}, we then have that
\[
B_\nu = \text{weighted projective space}\  \CC P^2 (4,1,1)\,,\ r_\nu = 2
\]
and there are twisted sectors for $T=1/4, \,2/4, \,3/4, \,1$, with
\[
(B_\nu)_{1/4} = (B_\nu)_{2/4} = (B_\nu)_{3/4} = \{ pt \} \quad\text{and}\quad
(B_\nu)_1 = \CC P^2 (4,1,1)\,.
\]
The corresponding degree shifting numbers are
\[
\iota_{k/4} = k/2\,,\ k=1,\,2,\,3, \quad\text{and}\quad \iota_1 = 0\,.
\]
Hence, we have that
\[
F_{k/4}^\ast (B_\nu) = 
\begin{cases}
\QQ &\textup{ if }\ast=k,\\
0 &\textup{ otherwise,}
\end{cases}
\quad\text{for} \ k=1,\,2,\,3,\quad
F_1^\ast (B_\nu) = 
\begin{cases}
\QQ &\textup{ if }\ast=0,\,2,\,4,\\
0 &\textup{ otherwise,}
\end{cases}
\]
and
\[
H_{orb}^\ast (B_\nu = \CC P^2 (4,1,1); \QQ) = 
\begin{cases}
\QQ &\textup{ if }\ast=0,\,1,\,3,\,4,\\
\QQ \oplus \QQ & \textup{ if }\ast=2,\\
0 &\textup{ otherwise.}
\end{cases}
\]
Using Theorem~\ref{thm:quotients} we then have the following table giving the contributions of
$F_1$, $F_{1/4}$, $F_{2/4}$ and $F_{3/4}$ to the rank of $HC_\ast (L^5_3 (1,1,1) )$:
\begin{equation*}
\begin{array}{|c|c|c|c|c|c|c|cl} \hline
  \ast =  & \quad   0 \quad  & \quad 2 \quad & \quad 4 \quad & \quad 6 \quad & \quad 8 \quad & \quad 10 \quad & \cdots
\\ 
\hline
F_1   &     0      &   1    &   1  &  2 & 1 & 2 & \cdots
\\ 
\hline 
F_{1/4}   &     1      &    0    &   1  &  0 & 1 & 0 & \cdots
\\ 
\hline
F_{2/4}  &     0      &    1    &   0  &  1 & 0 & 1 & \cdots
\\ 
\hline 
F_{3/4}  &     0      &    0    &   1  &  0 & 1 & 0 & \cdots
\\ 
\hline 
cb_\ast (L^5_3 (1,1,1)) &   1   & 2 & 3 & 3 & 3 & 3 &  \cdots
\\
\hline
\end{array}
\end{equation*}

\section{Polytope geometry}

A ($n$-dimensional) convex polytope $\Delta$ is the convex hull of a finite set of points in $\RR^n$ 
(with non-empty interior). A polyhedral set is a subset of $\RR^n$ given by the intersection of a finite 
number of halfspaces. Note that a polytope is the same thing as a compact polyhedral set. 
We say that $F$ is a face of $\Delta$ if $F\subseteq \partial \Delta$ and $F$ is the intersection of $\Delta$ 
with some hyperplane. We say that a $0$, $1$ or $n-1$ dimensional face is a vertex, an edge or a facet, 
respectively.

\subsection{Ehrhart quasi-polynomial}

The Ehrhart quasi-polynomial of a rational polytope counts the number of rational points in the given polytope. 
Let $\Delta\subseteq \RR^n$ be such a polytope and let $m$ be a positive integer such that $m\Delta$ is 
an integral polytope, that is, $m\Delta$ has vertices in $\ZZ^n$. We call the minimal such $m$ the order of $D$.

We define, for $t\in \NN$,
$$L_\Delta(t)=\#\left(\Delta\cap \frac{1}{t}\ZZ^n\right).$$
Equivalently, $L_\Delta(t)$ is the number of integral points in $t\Delta$. The fact that this is a quasi-polynomial 
function and the following properties are well known.

\begin{theorem}
Given a rational polytope $\Delta\subseteq \RR^n$, $$L_\Delta(t)=\#\left(\Delta\cap \frac{1}{t}\ZZ^n\right)$$ 
is given by a quasi-polynomial function of period $m\in\NN$ for $t\in \NN$, called the Ehrhart quasi-polynomial. 
Moreover the Ehrhart quasi-polynomial has the following properties:
\begin{enumerate}
\item Each branch $t\mapsto L_\Delta(mt+j)$, $j=0,\ldots,m-1$, is a polynomial of degree $n$ and the leading 
term is $\vol(m\Delta)$.
\item The constant term is $L_\Delta(0)=1$ (if $\Delta$ is convex).
\item $L_\Delta(-t)=(-1)^n L_{\inte \Delta}(t)$ (Ehrhart reciprocity).
\end{enumerate}
\label{Ehrhart}
\end{theorem}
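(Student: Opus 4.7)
The plan is to reduce the theorem to the case of a rational $n$-simplex via triangulation and then analyze the Ehrhart series using the generating function of the lattice points in the homogenized cone. For the reduction step, I would triangulate $\Delta$ into a finite rational simplicial complex $\T$ using only the vertices of $\Delta$, so that every simplex has denominator dividing $m$; writing
\[
L_\Delta(t) \; = \; \sum_{\sigma \in \T} L_{\sigma^{\circ}}(t)
\]
(summed over all simplices of $\T$, with $\sigma^\circ$ the relative interior) and then expanding each $L_{\sigma^\circ}$ as a signed sum of Ehrhart functions of closed sub-simplices by inclusion--exclusion, reduces everything to the Ehrhart function of a single closed rational $n$-simplex.

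\emph{Generating function for a simplex.} For a rational $n$-simplex $\sigma$ with vertices $v_0,\ldots,v_n$, I would form the cone
\[
C(\sigma) \; = \; \Bigl\{ \textstyle\sum_i \lambda_i (v_i,1) : \lambda_i \geq 0 \Bigr\} \; \subset \; \RR^{n+1}\,,
\]
so that lattice points of $C(\sigma)$ at height $t\in\NN$ biject with lattice points of $t\sigma$. Using the unique decomposition of each lattice point of $C(\sigma)$ as a nonnegative integer combination of the integral generators $m(v_i,1)$ plus a lattice point of their half-open fundamental parallelepiped $\Pi$, the Ehrhart series rewrites as
\[
\sum_{t \geq 0} L_\sigma(t)\, z^t \; = \; \frac{1}{(1-z^m)^{n+1}} \sum_{q \in \Pi \cap \ZZ^{n+1}} z^{q_{n+1}}\,,
\]
whose partial-fraction expansion shows that $L_\sigma(t)$ is a quasi-polynomial of degree at most $n$ and period dividing $m$.

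\emph{Leading term, constant term, reciprocity.} The leading coefficient of each branch $t \mapsto L_\Delta(mt+j)$ equals $\vol(m\Delta)$ by the Riemann-sum estimate $L_\Delta(T)/T^n \to \vol(\Delta)$ as $T\to\infty$. The constant term $L_\Delta(0) = 1$ reduces, via the face decomposition of the first paragraph, to the alternating sum $\sum_{\sigma \in \T} (-1)^{\dim \sigma}$, i.e. the Euler characteristic of $\Delta$, which is $1$ by convexity. Ehrhart reciprocity for the simplex follows from Stanley's reciprocity for rational cones, which states that the generating function of $\inte C(\sigma) \cap \ZZ^{n+1}$ equals $(-1)^{n+1}$ times the substitution $z \mapsto z^{-1}$ in the series above; height-$t$ coefficient extraction yields $L_{\inte \sigma}(t) = (-1)^n L_\sigma(-t)$, and the identity transfers to $\Delta$ through the same face decomposition.

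The main obstacle is coordinating the triangulation step with the closure/interior distinction required by reciprocity. The standard remedy is to work with a \emph{half-open} triangulation, in which each $\sigma \in \T$ carries a prescribed partition of its boundary faces into ``open'' and ``closed'' pieces so that $\Delta$ is tiled disjointly; this way, the cone-level identities and the simplex reciprocity descend to $\Delta$ directly, without further combinatorial bookkeeping.
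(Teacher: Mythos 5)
The paper does not prove this theorem itself; it cites \cite{haase2012lecture} for these classical facts. Your outline is the standard textbook argument (triangulation into rational simplices, the fundamental-parallelepiped generating function for the cone over a simplex, the Riemann-sum identification of the leading coefficient, the Euler-characteristic computation of the constant term, and Stanley reciprocity descended via a half-open decomposition), and it is correct and essentially the same as the proof in the cited reference.
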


\noindent Proofs of these facts can be found in \cite{continuousdiscrete}: Theorem 3.23 proves quasi-polynomiality, the leading term is provided by Corollary 3.20 (when $m=1$) and Exercise 3.34 (general $m$), the constant term by  Corollary 3.15 ($m=1$) and Exercise 3.32 (general $m$), and Ehrhart reciprocity is Theorem 4.1. The reader can also see \cite{haase2012lecture}.

When $\Delta$ is an integral polytope,
i.e. $m=1$, the quasi-polynomial is indeed a polynomial. Note that the properties of the Ehrhart (quasi-)polynomial 
can be regarded as a generalization of Pick's theorem. Indeed, for an integral (i.e., $m=1$) polygon $\Delta$ in dimension $n=2$, 
the Ehrhart polynomial has degree 2 and since its leading term is the area of the polygon we have that
$$2\vol(\Delta) = L_\Delta(1)+L_\Delta(-1)-2L_\Delta(0)=L_\Delta(1)+L_\Delta(-1)-2\,,$$
with $L_\Delta(1)$ and $L_\Delta(-1)$ being the number of integral points in $\Delta$ and $\inte \Delta$, respectively.

We can also associate to the polytope its Ehrhart series, which is the generating series of the Ehrhart quasi-polynomial:
$$\Ehr_\Delta(z)=\sum_{t=0}^\infty L_\Delta(t)z^t.$$
The fact that $L_\Delta$ is a quasi-polynomial of period $m$ implies that the Ehrhart series can be written in the form
\begin{equation}\Ehr_\Delta(z)=\frac{1}{(1-z^m)^{n+1}}\left(\sum_{j=0}^{m(n+1)-1} \delta_j z^j\right)
\label{ehrseries}
\end{equation}
for some coefficients $\delta_j=\delta_j(\Delta)$, $j=0, \ldots, m(n+1)-1$. The vector $(\delta_0, \ldots, \delta_{m(n+1)-1})$ 
is known as the $\delta$-vector of the polytope $\Delta$ (and it is also called the $h^\ast$-vector by some authors) and the 
numerator of the Ehrhart series is called the $\delta$-polynomial. It is a result by Stanley \cite[Theorems 1.6, 1.7]{stanley} that, in fact, $\delta_j$ are non-negative 
integers. The Ehrhart polynomial can be recovered from the $\delta$-vector by
\begin{equation}L_\Delta(t)=\sum_{j\equiv t \mod m} \delta_j \binom{\frac{t-j}{m}+n}{n}.
\label{Ehrhartbasis}
\end{equation}

Note that by the Ehrhart reciprocity we have:
\begin{align*}L_{\inte \Delta}(t+m)&=(-1)^n L_\Delta(-t-m)=(-1)^n\left(\sum_{j\equiv -t \mod m} \delta_j \binom{\frac{-t-m-j}{m}+n}{n}\right)\\
&=\sum_{j\equiv -t \mod m} \delta_j \binom{\frac{t+j}{m}}{n}=\sum_{j \equiv t \mod m} \delta_{mn-j} \binom{\frac{t-j}{m}+n}{n}. 
\end{align*}

Hence, we define for convenience
\begin{equation}
\Ehr_{\inte\Delta}(z)\equiv \sum_{t=0}^\infty L_{\inte\Delta}(t+m)z^t=\frac{1}{(1-z^m)^{n+1}}\left(\sum_{j=0}^{m(n+1)-1} \delta_{mn-j} z^j\right).
\label{ehrinterior}
\end{equation}

\subsection{Subdivisions of polytopes}

We define here subdivisions of polytopes and some notation associated with them. 

\begin{definition}
A subdivision of a polytope $\Delta$ is a set of polytopes $\T=\{\theta\}$ with the following properties:
\begin{enumerate}
\item If $\theta'$ is a face of $\theta\in \T$ then $\theta'\in \T$.
\item If $\theta_1, \theta_2\in \T$ then their intersection is a (possibly empty) common face of $\theta_1$ and $\theta_2$.
\item The family of polytopes $\T$ covers $\Delta$, that is, $\Delta=\bigcup_{\theta\in \T}\theta$.
\end{enumerate}
Let $\T_d$ denote the set of $d$-dimensional polytopes in $\T$. We say that a subdivision is rational if 
$m\T_0\subseteq \ZZ^n$ where $m$ is the order of $\Delta$; we will always assume that our subdivisions are rational. If every 
polytope in $\T$ is a simplex we say that $\T$ is a triangulation. 
\end{definition}

Rational triangulations are relevant in our context since such triangulations of a toric diagram $D$ correspond to a (toric) crepant resolution of 
the symplectic cone $W$, as we will explain in section \ref{resolutions}. The corresponding resolution is smooth only when $\T$ is a 
unimodular triangulation, that is, a subdivision $\T$ such that $\theta$ 
is an integral, unimodular simplex for every $\theta\in \T$. This implies in particular that $D$ is integral, i.e. $m=1$. Note that for $n>2$ 
not every integral polytope of dimension $n$ admits a unimodular triangulation, but when $n\leq 2$ it always does. 

\subsection{Reflexive polytope}

The dual of a polytope $\Delta\subseteq \RR^n$ (also called polar polytope) is 
$$\Delta^\circ=\{y\in \RR^n: \langle x, y\rangle \geq -1\textup{ for all }x\in \Delta\}.$$
Note that $(\Delta^\circ)^\circ=\Delta$. Note that the vertices of the dual polytope correspond to the vectors normal to the 
facets of the original polytope, and vice-versa. More generally, there is a correspondence between $d$-faces of $\Delta$ 
and $(n-d)$-faces of $\Delta^\circ$. 

\begin{definition}
An integral polytope $\Delta$ is said to be reflexive if $\Delta^\circ$ is also an integral polytope. 
\end{definition}
An integral polytope is reflexive if and only if it can be written in the form
$$\Delta=\{x\in \RR^n: \langle x, \nu_j\rangle\geq -1, \,j=1, \ldots, d\}.$$

Up to $\textup{Aff}(n, \ZZ)$ equivalence, there is a finite number of reflexive polytopes for each $n$. For instance for $n=2$ 
there are precisely 16. 

An interesting result concerning the Ehrhart polynomials of reflexive polytopes is the following:
 
\begin{theorem}[Hibi's Palindromic Theorem, \cite{hibi1990some}]\label{hibbi}
An integral polytope $\Delta$ is reflexive if and only if its $\delta$-vector is palindromic, that is, $\delta_j=\delta_{n-j}$ for 
$j=0, \ldots, n$.
\end{theorem}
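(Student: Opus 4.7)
The plan is to recast palindromicity of the $\delta$-vector as the identity
\[
L_\Delta(t) = L_{\inte \Delta}(t+1) \quad \text{for all } t \in \NN_0,
\]
and then show this identity is equivalent to $\Delta$ being reflexive. Comparing equations (\ref{ehrseries}) and (\ref{ehrinterior}) in the case $m=1$,
\[
\Ehr_\Delta(z) = \frac{\sum_{j=0}^{n} \delta_j z^j}{(1-z)^{n+1}} \quad\text{and}\quad \Ehr_{\inte\Delta}(z) = \frac{\sum_{j=0}^{n} \delta_{n-j} z^j}{(1-z)^{n+1}},
\]
so $\Ehr_\Delta = \Ehr_{\inte\Delta}$ (equivalently, the identity above) holds if and only if $\delta_j = \delta_{n-j}$ for all $j$.

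The implication reflexive $\Rightarrow$ palindromic is then immediate: writing $\Delta = \{x \in \RR^n : \langle x, \nu_i\rangle \geq -1,\ i=1,\ldots,d\}$ with primitive lattice normals $\nu_i \in \ZZ^n$, for any $x \in \ZZ^n$ one has
\[
x \in \inte((t+1)\Delta) \iff \langle x, \nu_i\rangle > -(t+1)\ \forall i \iff \langle x, \nu_i\rangle \geq -t\ \forall i \iff x \in t\Delta.
\]

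For the converse, palindromic $\Rightarrow$ reflexive, I would first set $t=0$ in the identity, obtaining $L_{\inte\Delta}(1) = L_\Delta(0) = 1$, so $\Delta$ has a unique interior lattice point, which I translate to the origin. It remains to show that every facet $F$ of $\Delta$ has lattice distance $d_F = 1$ from $0$. Arguing by contradiction, assume some facet $F$ with primitive lattice normal $\nu_F$ and supporting equation $\langle x, \nu_F\rangle = -d_F$ has $d_F \geq 2$; I would then construct, for $t$ sufficiently large, a lattice point $x \in \inte((t+1)\Delta)\setminus t\Delta$, contradicting the identity. Concretely, pick a vertex $v \in F \cap \ZZ^n$ and a lattice vector $y \in \ZZ^n$ with $\langle y, \nu_F\rangle = d_F - 1$ and $\langle y, \nu_{F'}\rangle \geq 1$ for every facet $F' \ni v$; such $y$ exists because the affine lattice hyperplane $\{\langle\cdot,\nu_F\rangle = d_F-1\}$ meets the (non-empty, full-dimensional) open tangent cone $\{y : \langle y, \nu_{F'}\rangle > 0,\ F' \ni v\}$ in an unbounded, relatively open rational polyhedron, and any such polyhedron contains lattice points (slide a fixed lattice point on the hyperplane by a large multiple of a lattice direction in the relative interior of the cone). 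Setting $x := (t+1)v + y$ gives $\langle x, \nu_F\rangle = -td_F - 1$, which lies strictly between $-(t+1)d_F$ and $-td_F$ precisely because $d_F \geq 2$; meanwhile the conditions on $y$ force $\langle x, \nu_{F'}\rangle \geq -(t+1)d_{F'} + 1$ for other $F' \ni v$, and for facets $F' \not\ni v$ the strict inequality $\langle v, \nu_{F'}\rangle > -d_{F'}$ makes the constraint loose for $t$ large, placing $x \in \inte((t+1)\Delta) \setminus t\Delta$.

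The main obstacle is this last step: producing the auxiliary lattice vector $y$ with the required simultaneous inequalities. It rests on a careful but elementary piece of convex geometry combining primitivity of $\nu_F$, full-dimensionality of the tangent cone to $\Delta$ at the vertex $v$, and the standard fact that a full-dimensional unbounded rational polyhedron in an affine lattice subspace contains lattice points.
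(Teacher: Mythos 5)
The paper does not actually prove this statement: it is quoted as a classical result (Hibi's theorem), and the only thing the paper extracts from it is the remark that palindromicity is equivalent to $\#\left(t\Delta\cap\ZZ^n\right)=\#\left((t+1)\inte\Delta\cap\ZZ^n\right)$ for all $t$. Your proof begins with exactly that reformulation (correctly, via \eqref{ehrseries} and \eqref{ehrinterior} with $m=1$) and then supplies a genuine argument for both implications, which is essentially Hibi's original one. The direction reflexive $\Rightarrow$ palindromic is clean and correct: the rounding $\langle x,\nu_i\rangle>-(t+1)\Leftrightarrow\langle x,\nu_i\rangle\ge -t$ for integral $x$ and integral $\nu_i$ is exactly the point. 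The converse is also structurally correct: $t=0$ pins down the unique interior lattice point, and for a facet at lattice distance $d_F\ge 2$ the point $x=(t+1)v+y$ does land in $\inte\left((t+1)\Delta\right)\setminus t\Delta$ once $y$ exists and $t$ is large, by the computations you indicate.

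The one step you should tighten is the existence of $y$. As stated, two things are off. First, the blanket claim that an unbounded, relatively open rational polyhedron contains lattice points is false (e.g.\ $\{0<x_1<\tfrac12\}\subset\RR^2$). Second, ``a lattice direction in the relative interior of the cone'' cannot mean the relative interior of the full tangent cone $\{u:\langle u,\nu_{F'}\rangle>0,\ F'\ni v\}$: any such direction has $\langle u,\nu_F\rangle>0$, so sliding along it leaves the hyperplane $\langle\cdot,\nu_F\rangle=d_F-1$ on which $y$ must stay. The correct (and easy) fix is to slide along a lattice vector $u$ in the relative interior of the tangent cone of the facet $F$ at $v$ inside $\nu_F^{\perp}$, i.e.\ $u\in\ZZ^n$ with $\langle u,\nu_F\rangle=0$ and $\langle u,\nu_{F'}\rangle>0$ for every facet $F'\ni v$, $F'\ne F$; such $u$ exists because that relative interior is the relative interior of the rational $(n-1)$-dimensional cone generated by $F-v$, hence contains a rational point which can be cleared of denominators. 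Then take any $y_1\in\ZZ^n$ with $\langle y_1,\nu_F\rangle=d_F-1$ (primitivity of $\nu_F$) and set $y=y_1+Nu$ for $N$ large. With this repair the argument is complete; note also that you only need $1\le\langle y,\nu_F\rangle\le d_F-1$, not the exact value $d_F-1$, which is another way to see why $d_F\ge2$ is the crux.
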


We remark that by equations \eqref{ehrseries} and \eqref{ehrinterior} the palindromic condition is equivalent to 
$$\# [t\Delta\cap \ZZ^n]=\#[(t+1)\inte\Delta\cap \ZZ^n] \textup{ for every }t\in \NN.$$

\section{Gorenstein Toric Contact Manifolds}
\label{toriccontact}

In this section we will explain the $1$-$1$ correspondence between $\QQ$-Gorenstein toric contact manifolds, 
i.e. good toric contact manifolds (in the sense of Lerman~\cite{lerman2002contact}) with 
torsion first Chern class, and rational toric diagrams (defined below). We extend the presentation in~\cite{abreu2016mean} 
which is only between Gorenstein contact manifolds and integral toric diagrams.

Via symplectization, there is a $1$-$1$ correspondence between co-oriented contact manifolds
and symplectic cones, i.e. triples $(W,\om,X)$ where $(W,\om)$ is a connected symplectic manifold
and $X$ is a vector field, the Liouville vector field, generating a proper $\R$-action
$\rho_t:W\to W$, $t\in\R$, such that $\rho_t^\ast (\om) = e^{t} \om$. A closed symplectic cone is a 
symplectic cone $(W,\om,X)$ for which the corresponding contact manifold $M = W/\R$ is closed.

A toric contact manifold is a contact manifold of dimension $2n+1$ equipped with an effective Hamiltonian
action of the standard torus of dimension $n+1$: $\TT^{n+1} = \RR^{n+1} / 2\pi\ZZ^{n+1}$. Also via symplectization,
toric contact manifolds are in $1$-$1$ correspondence with toric symplectic cones, i.e. symplectic cones
$(W,\om,X)$ of dimension $2(n+1)$ equipped with an effective $X$-preserving Hamiltonian $\TT^{n+1}$-action,
with moment map $\mu : W \to \R^{n+1}$ such that $\mu (\rho_t (w)) = e^{t} \mu (w)$, for all $w\in W$ and $t\in\R$.
Its moment cone is defined to be $C:= \mu(W) \cup \{ 0\} \subset \R^{n+1}$.

A toric contact manifold is {\it good} if its toric symplectic cone has a moment cone with the following properties.
\begin{definition} \label{def:good}
A cone $C\subset\R^{n+1}$ is \emph{good} if it is strictly convex and there exists a minimal set 
of primitive vectors $\nu_1, \ldots, \nu_d \in \ZZ^{n+1}$, with 
$d\geq n+1$, such that
\begin{itemize}
\item[(i)] $C = \bigcap_{j=1}^d \{x\in\R^{n+1}\mid 
\ell_j (x) := \langle x, \nu_j \rangle \geq 0\}$.
\item[(ii)] Any codimension-$k$ face of $C$, $1\leq k\leq n$, 
is the intersection of exactly $k$ facets whose set of normals can be 
completed to an integral basis of $\ZZ^{n+1}$.
\end{itemize}
The primitive vectors $\nu_1, \ldots, \nu_d \in \Z^{n+1}$ are called the defining normals of the good cone $C\subset\R^{n+1}$.
\end{definition}
The analogue for good toric contact manifolds of Delzant's classification theorem for closed toric
symplectic manifolds is the following result (see~\cite{lerman2002contact}).
\begin{theorem} \label{thm:good}
For each good cone $C\subset\R^{n+1}$ there exists a unique closed toric symplectic cone
$(W_C, \om_C, X_C, \mu_C)$ with moment cone $C$.
\end{theorem}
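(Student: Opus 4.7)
The plan is to adapt Delzant's classification of compact symplectic toric manifolds to the conical setting, with the goodness hypotheses playing exactly the role needed to guarantee smoothness of a reduction and uniqueness of local normal forms.

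For existence, I would realise $W_C$ as a symplectic quotient. Let $\beta:\ZZ^d\to\ZZ^{n+1}$ send the standard basis to the defining normals $\nu_1,\ldots,\nu_d$; condition (ii) of Definition~\ref{def:good} applied at a vertex of $C$ shows that some $n+1$ of the $\nu_j$ form an integral basis, so $\beta$ is surjective. Let $K=\ker(\beta\otimes\RR)/\ker\beta$, a torus of dimension $d-n-1$ acting on $\CC^d$ through the standard $\TT^d$-action with moment map $\mu_{std}(z)=\tfrac12(|z_1|^2,\ldots,|z_d|^2)$, and define
\[
W_C := \bigl(\mu_K^{-1}(0)\setminus\{0\}\bigr)/K,
\]
where $\mu_K$ is the composition of $\mu_{std}$ with the dual of the inclusion $\mathrm{Lie}(K)\hookrightarrow\RR^d$. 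Conditions (i) and (ii) together force $K$ to act freely on $\mu_K^{-1}(0)\setminus\{0\}$: at a point with vanishing coordinates indexed by $I\subseteq\{1,\ldots,d\}$, the corresponding face of $C$ has codimension $|I|$, and the basis condition in Definition~\ref{def:good}(ii) is precisely what makes the stabiliser in $K$ trivial. Thus $W_C$ is a smooth symplectic manifold with reduced form $\omega_C$.

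Next, the radial Liouville field $X_{std}=\tfrac12\sum_j(x_j\partial_{x_j}+y_j\partial_{y_j})$ on $\CC^d$ is $\TT^d$-invariant and satisfies $\mathcal{L}_{X_{std}}\mu_{std}=\mu_{std}$, so it preserves $\mu_K^{-1}(0)$ and descends to a Liouville field $X_C$ on $W_C$. The residual torus $\TT^{n+1}=\TT^d/K$ acts on $(W_C,\omega_C)$ in an $X_C$-preserving Hamiltonian way, and a direct computation identifies the image of its moment map $\mu_C$ with the image of the positive orthant under the transpose of $\beta$, which is exactly $C$. Properness of the $\RR$-action generated by $X_C$ and closedness of $W_C/\RR$ follow from the construction. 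For uniqueness, given any second toric symplectic cone $(W,\omega,X,\mu)$ with moment cone $C$, I would first use action-angle coordinates over $\inte C$ to identify $\mu^{-1}(\inte C)$ with $\mu_C^{-1}(\inte C)$ as $\TT^{n+1}$-equivariant symplectic cones (the principal bundle is trivial and the homogeneity $\mu(\rho_t w)=e^t\mu(w)$ pins down $X$ uniquely), and then extend across the boundary strata using the Guillemin-Marle-Sternberg equivariant local normal form: condition (ii) makes the local model near a codimension-$k$ face the standard slice $\CC^k\times T^*\TT^{n+1-k}$, independent of any further choices.

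The main obstacle is this extension across the boundary strata. The interior identification is essentially forced by action-angle theory, and the local model near each face is combinatorially determined, but gluing these local equivariant symplectomorphisms into a single global equivalence that additionally intertwines the Liouville vector fields requires a Moser-type homotopy that simultaneously preserves $\omega$, the $\TT^{n+1}$-action, and the degree-one homogeneity under the respective $X$'s. This is the technical heart of the argument, and is the same delicate step that appears in Delzant's original proof, with the added wrinkle that every deformation must be compatible with the conical structure.
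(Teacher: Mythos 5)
This theorem is not proved in the paper: it is Lerman's classification of closed toric symplectic cones, cited from \cite{lerman2002contact}, and the paper only recalls the existence construction, namely symplectic reduction of $(\RR^{2d}\setminus\{0\},\omega_{\rm st},X_{\rm st})$ by the subgroup $K\subset\TT^d$ of~(\ref{eq:defK}). Measured against that construction, your existence argument contains a genuine error: you take $K$ to be the \emph{connected} torus $\ker(\beta\otimes\RR)/\ker\beta$ of dimension $d-n-1$, justified by the claim that condition (ii) of Definition~\ref{def:good} ``applied at a vertex'' makes $\beta:\ZZ^d\to\ZZ^{n+1}$ surjective. That claim fails twice: condition (ii) is only imposed for codimension-$k$ faces with $1\leq k\leq n$, so it says nothing about the apex, and even where it applies it only asks that the normals \emph{can be} completed to an integral basis, not that they are completed by other normals. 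Consequently $\beta(\ZZ^d)$ may be a proper sublattice of $\ZZ^{n+1}$, and the correct $K$ is the full kernel of $\TT^d\to\TT^{n+1}$, which is then disconnected (an extension of the finite group $\ZZ^{n+1}/\beta(\ZZ^d)$ by your torus). With your connected $K$ the reduction produces a finite cover of $W_C$ on which $\TT^{n+1}=\RR^{n+1}/\ZZ^{n+1}$ does not act effectively, and the moment cone comes out wrong. Concretely, for $n+1=2$, $d=2$, $\nu_1=(1,0)$, $\nu_2=(1,p)$ with $p\geq 2$ (a good cone whose link is a lens space), your $K$ is trivial and your construction returns $\CC^2\setminus\{0\}$ with moment cone the standard orthant, whereas the paper's $K\cong\ZZ_p$ yields the correct quotient.

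The rest of the outline is sound in spirit but incomplete. The freeness argument is correct once $K$ is fixed (points over the apex are excluded by strict convexity, and condition (ii) kills stabilisers over all faces of codimension at most $n$), and the descent of the radial Liouville field is fine since $\mu_{\rm std}$ is homogeneous of degree one under $X_{\rm st}$, which is why one must reduce at level $0$. For uniqueness you correctly identify the strategy (action--angle coordinates over $\inte C$ plus equivariant local normal forms near the strata) and honestly flag that the global gluing compatible with $\omega$, the torus action, \emph{and} the homogeneity is the missing step; since the paper defers the entire proof to \cite{lerman2002contact}, that part of your proposal is an acceptable sketch rather than a complete argument, but the definition of $K$ must be repaired before the existence half is correct.
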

The existence part of this theorem follows from an explicit symplectic reduction of the standard euclidean
symplectic cone $(\R^{2d}\setminus\{0\}, \omega_{\rm st}, X_{\rm st})$, where $d$ is the number of defining normals of the
good cone $C\subset\R^{n+1}$, with respect to the action of a subgroup $K\subset\TT^d$ induced by the standard
action of $\TT^d$ on $\R^{2d}\setminus\{0\} \cong \CC^d \setminus\{0\}$. More precisely,
\begin{equation} \label{eq:defK}
K := \left\{[y]\in\TT^d \mid \sum_{j=1}^d y_j \nu_j \in 2\pi\Z^{n+1}   \right\}\,,
\end{equation}
where $\nu_1, \ldots, \nu_d \in \Z^{n+1}$ are the defining normals of $C$, i.e. $K:= \ker (\beta)$ where $\beta :\TT^d \to \TT^{n+1}$
is represented by the matrix
\begin{equation} \label{eq:defBeta}
\left[\ \nu_1 \ | \ \cdots \ | \ \nu_d \  \right]\,.
\end{equation}
Depending on the context, which will be clear in each case, we will also denote by $\beta$ the map from $\Z^d$ to $\Z^{n+1}$  
represented by this matrix.

The Chern classes of a co-oriented contact manifold can be canonically identified with 
the Chern classes of the tangent bundle of the associated symplectic cone.
The following proposition gives a moment cone characterization for whether the first Chern class is torsion; this result is commonly 
used in toric Algebraic Geometry (see, e.g., section $4$ of~\cite{batyrev1996strong}).
\begin{prop} \label{prop:c_1}
Let $(W_C, \omega_C,X_C)$ be a good toric symplectic cone.
Let $\nu_1,\ldots,\nu_d \in \Z^{n+1}$ be the defining normals of the corresponding moment cone 
$C\subset\R^{n+1}$. Then $m \,c_1 (TW_C) = 0$ if and only if there exists $\nu^\ast \in (\Z^{n+1})^\ast$ 
such that
\[
\nu^\ast (\nu_j) = m\,,\ \forall\ j=1,\ldots,d\,.
\]
\end{prop}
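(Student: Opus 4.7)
The plan is to compute $c_1(TW_C)$ explicitly via the symplectic reduction construction of $W_C$ and then convert the $m$-torsion condition on $c_1$ into the stated combinatorial condition on the defining normals.

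First I would dualize the short exact sequence of tori
\[
0 \to K \to \mathbb{T}^d \xrightarrow{\beta} \mathbb{T}^{n+1} \to 0
\]
arising from (\ref{eq:defK}) to obtain an exact sequence of character lattices
\[
0 \to (\ZZ^{n+1})^\ast \xrightarrow{\beta^\ast} (\ZZ^d)^\ast \to \hat{K} \to 0,
\]
where, since $\beta$ is represented by the matrix (\ref{eq:defBeta}), the map $\beta^\ast$ sends $\nu^\ast$ to $(\nu^\ast(\nu_1), \ldots, \nu^\ast(\nu_d))$. Injectivity of $\beta^\ast$ uses that the good-cone condition forces the $\nu_j$ to span $\RR^{n+1}$.

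Next, I would express $c_1(TW_C)$ equivariantly via the reduction. On $Z := \mu_K^{-1}(0) \subset \CC^d\setminus\{0\}$ there is a natural $K$-equivariant isomorphism $T\CC^d|_Z \cong \pi^\ast TW_C \oplus \underline{\mathfrak{k}} \oplus \underline{\mathfrak{k}^\ast}$, with $\pi: Z \to W_C$ the quotient, the summand $\underline{\mathfrak{k}}$ accounting for tangents to the $K$-orbits and $\underline{\mathfrak{k}^\ast}$ for the symplectic-normal directions coming from the moment map. Since $K$ is abelian it acts trivially on $\mathfrak{k}$ and $\mathfrak{k}^\ast$, so these two summands contribute trivially to $K$-equivariant Chern classes. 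Taking $c_1^K$ on both sides and passing through the Kirwan map $\hat{K} \cong H^2_K(\mathrm{pt}) \to H^2(W_C;\ZZ)$ yields
\[
c_1(TW_C) = \sum_{j=1}^d \chi_j|_K \in \hat{K} \hookrightarrow H^2(W_C;\ZZ),
\]
where $\chi_j$ is the $j$-th standard character of $\mathbb{T}^d$. Equivalently, this is the familiar toric-divisor identity $c_1(TW_C) = [D_1 + \cdots + D_d]$ combined with the standard toric divisor exact sequence, the $\chi_j|_K$ being the images of the standard generators of $(\ZZ^d)^\ast$ in $\hat{K}$. The condition $m c_1(TW_C) = 0$ then becomes the assertion that $(m,\ldots,m) \in (\ZZ^d)^\ast$ lies in the image of $\beta^\ast$, which by the explicit description of $\beta^\ast$ is exactly the existence of $\nu^\ast \in (\ZZ^{n+1})^\ast$ with $\nu^\ast(\nu_j) = m$ for all $j$.

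The main technical point is the Kirwan step: one must check that the embedding $\hat{K} \hookrightarrow H^2(W_C;\ZZ)$ is actually injective, so that vanishing of $m c_1$ in $H^2(W_C;\ZZ)$ is equivalent to the vanishing of the corresponding element of $\hat{K}$. This follows from a Leray-Serre argument for the principal $K$-bundle $Z \to W_C$, using that $Z$ has trivial $H^1$ and $H^2$ in the relevant range of $d$, or, more directly, from the standard divisor class exact sequence applied to the affine toric variety underlying $W_C$.
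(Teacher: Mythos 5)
Your proposal is correct and follows essentially the same route as the paper: both arguments reduce the statement to the exactness of $0 \to \ZZ^{n+1} \xrightarrow{\beta^t} \ZZ^d \to H^2(W_C;\ZZ) \to 0$ together with the identification of $c_1(TW_C)$ with the image of $(1,\dots,1)$, so that $m\,c_1(TW_C)=0$ becomes $(m,\dots,m)\in\operatorname{im}\beta^t$. The only difference is cosmetic: the paper simply cites the toric facts ($c_1=\sum_i[D_i]$ and the divisor class exact sequence from Cox--Little--Schenck), whereas you re-derive the first via the equivariant splitting of $T\CC^d|_Z$ under reduction and correctly flag the injectivity of $\hat K\hookrightarrow H^2(W_C;\ZZ)$ as the point that the cited exact sequence supplies.
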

\begin{proof}
Let $D_i$ be the toric divisor associated to the facet of $C$ with normal $\nu_j$. The first Chern class is well-known to be given 
as the sum of the toric divisors \cite[Theorem 4.1.3]{cox}
$$c_1(TW_C)=\sum_{i=1}^d [D_i]\in H^2(W_C; \ZZ).$$
The second cohomology group is the cokernel of the map $\beta^t$
$$0\rightarrow \ZZ^{n+1} \overset{\beta^t}{\longrightarrow} \ZZ^d \longrightarrow H^2(W_C; \ZZ)\rightarrow 0$$
(see \cite[Theorem 8.2.3]{cox}). The map from $\ZZ^d$ to $H^2(W_C;\ZZ)$ sends a basis vector $e_i$ to the class of the divisor $D_i$. 
By the formula above, $c_1(TW_C)$ is the image of $(1,\ldots, 1)\in \ZZ^d.$ Hence $m \,c_1 (TW_C) = 0$ if and only if
$$(m, \ldots, m)^t =\beta^t \nu$$ for some $\nu \in \ZZ^{n+1}$. Under the identification $\ZZ^{n+1}\cong (\ZZ^{n+1})^\ast$, the previous
 condition translates to 
$\nu^\ast(\nu_j) = m\,,\ \forall\ j=1,\ldots,d$. \qedhere
\end{proof}

By an appropriate change of basis of the torus $\TT^{n+1}$, i.e. an appropriate $SL(n+1,\Z)$
transformation of $\R^{n+1}$, this implies the following.
\begin{cor} \label{cor:c_1}
Let $(W_C, \omega_C,X_C)$ be a good toric symplectic cone with $c_1 (TW_C)$ torsion. 
Let $m\in\NN$ be the order of $c_1 (TW_C)$, i.e. the minimal positive integer such that $m \,c_1 (TW_C) = 0$. 
Then there exists an integral basis of $\TT^{n+1}$ for which the defining normals $\nu_1,\ldots,\nu_d \in \Z^{n+1}$ 
of the corresponding moment cone $C\subset\R^{n+1}$ are of the form
\[
\nu_j = (\tilde v_j, m)\,,\ \tilde v_j \in \Z^n\,,\ j=1,\ldots,d\,.
\]
\end{cor}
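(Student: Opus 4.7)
The plan is to leverage Proposition~\ref{prop:c_1} to produce a suitable linear functional and then change basis so that this functional becomes the projection onto the last coordinate.

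First, apply Proposition~\ref{prop:c_1} with the order $m$ of $c_1(TW_C)$ to obtain an element $\nu^\ast \in (\ZZ^{n+1})^\ast$ satisfying $\nu^\ast(\nu_j) = m$ for all $j = 1,\ldots,d$. The key preliminary claim is that $\nu^\ast$ is a \emph{primitive} element of the lattice $(\ZZ^{n+1})^\ast$. Indeed, suppose $\nu^\ast = k w^\ast$ for some $w^\ast \in (\ZZ^{n+1})^\ast$ and $k \in \NN$; then $w^\ast(\nu_j) = m/k$ for every $j$, and since the $\nu_j$ generate a sublattice of $\ZZ^{n+1}$ (at least they must, because $C$ has non-empty interior and $\beta$ is surjective after tensoring with $\QQ$) we conclude $m/k \in \ZZ$. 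Applying Proposition~\ref{prop:c_1} in the reverse direction with $w^\ast$ in place of $\nu^\ast$, we deduce $(m/k)\,c_1(TW_C) = 0$, and minimality of $m$ forces $k = 1$.

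Next, since $\nu^\ast$ is primitive, it can be extended to an integral basis $\{e_1^\ast, \ldots, e_n^\ast, \nu^\ast\}$ of $(\ZZ^{n+1})^\ast$. Dualizing, this corresponds to a basis $\{f_1, \ldots, f_{n+1}\}$ of $\ZZ^{n+1}$ in which the linear form $\nu^\ast$ reads as the projection onto the last coordinate. The change-of-basis matrix lies in $GL(n+1, \ZZ)$, and (by possibly replacing one basis vector by its negative) we may assume it lies in $SL(n+1, \ZZ)$; this induces the desired change of basis of $\TT^{n+1}$. In this new basis each defining normal $\nu_j$ has last coordinate equal to $\nu^\ast(\nu_j) = m$, so $\nu_j = (\tilde v_j, m)$ with $\tilde v_j \in \ZZ^n$, as required.

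The only subtle point — and essentially the whole content of the proof — is the primitivity of $\nu^\ast$, since without it we would only be able to arrange the last coordinate to be a divisor of $m$ rather than $m$ itself. Once primitivity is established, the change of basis is a completely standard fact about unimodular lattices, and the rest is bookkeeping. I expect no serious obstacle beyond verifying that the minimality of $m$ really does force primitivity via the converse direction of Proposition~\ref{prop:c_1}.
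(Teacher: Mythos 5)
Your proof is correct and follows the same route the paper intends: apply Proposition~\ref{prop:c_1} to get $\nu^\ast$ with $\nu^\ast(\nu_j)=m$, check that minimality of $m$ forces $\nu^\ast$ to be primitive (the converse direction of the proposition applied to $\nu^\ast/k$ does this, since $w^\ast(\nu_j)\in\ZZ$ automatically), and then extend $\nu^\ast$ to a dual basis. The paper states the corollary as following from ``an appropriate $SL(n+1,\ZZ)$ change of basis'' without spelling out the primitivity step, which you correctly identify and supply as the only nontrivial content.
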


When we are in the conditions of the last corollary, we will encode the $\QQ$-Gorenstein toric contact manifold 
whose symplectization is $(W_C, \omega_C, X_C)$ by the rational polytope
$$D=\conv(v_1, \ldots, v_d)\subseteq \RR^n$$
where $v_i=\tilde v_i/m\in \QQ^n$. The integer $m$ will be called the order of $D$ or the order of $M_D$.

The next definition and theorem are then the natural analogues for $\QQ$-Gorenstein toric contact
manifolds of Definition~\ref{def:good} and Theorem~\ref{thm:good}.
\begin{defn} \label{def:diagram}
A \emph{(rational) toric diagram} $D\subset\R^n$ of order $m$ is a rational simplicial polytope with
all of its facets $\Aff(n,\Z)$-equivalent to $\conv\left(\frac{1}{m}e_1, \ldots, \frac{1}{m}e_n\right)$, where
$\{e_1,\ldots,e_n\}$ is the canonical basis of $\R^n$.
\end{defn}
\begin{rem} \label{rem:diagram}
The group $\Aff(n,\Z)$ of integral affine transformations of $\R^n$ can be naturally identified
with the elements of $SL(n+1,\Z)$ that preserve the hyperplane $\left\{ (mv,m)\mid v\in\R^n \right\} \subset\R^{n+1}$. 
The conditions imposed on toric diagrams $D=\conv(v_1, \ldots, v_d)$
are equivalent to the corresponding cone with normals $\nu_j=(mv_j, m)$, $j=1,\ldots,d$, being smooth.
\end{rem}
\begin{thm} \label{thm:diagram}
For each (rational) toric diagram $D\subset\R^n$ there exists a unique $\QQ$-Gorenstein toric contact
manifold $(M_D, \xi_D)$ of dimension $2n+1$.
\end{thm}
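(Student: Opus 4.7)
The plan is to establish this correspondence by factoring through the already-known bijection between good cones $C\subset\R^{n+1}$ and closed toric symplectic cones (Theorem \ref{thm:good}), combined with the first Chern class criteria of Proposition \ref{prop:c_1} and Corollary \ref{cor:c_1}. For \emph{existence}, I would start with a toric diagram $D=\conv(v_1,\ldots,v_d)$ of order $m$, set $\nu_j:=(mv_j,m)\in\Z^{n+1}$, and form the cone $C:=\bigcap_{j=1}^d\{x\in\R^{n+1}:\langle x,\nu_j\rangle\geq 0\}$. Strict convexity is immediate because every defining normal lies in the affine hyperplane $\{x_{n+1}=m\}$, which is disjoint from the origin. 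Once the goodness of $C$ is verified, Theorem \ref{thm:good} yields a unique closed toric symplectic cone $(W_C,\omega_C,X_C,\mu_C)$, and the linear functional $\nu^\ast=e_{n+1}^\ast$ satisfies $\nu^\ast(\nu_j)=m$ for all $j$, so Proposition \ref{prop:c_1} gives $m\,c_1(TW_C)=0$. Hence $(M_D:=W_C/\R,\xi_D)$ is a $\QQ$-Gorenstein toric contact manifold.

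For \emph{uniqueness}, I would run this in reverse: given any $\QQ$-Gorenstein toric contact manifold of dimension $2n+1$, Corollary \ref{cor:c_1} supplies an integral basis of $\TT^{n+1}$ in which the defining normals of the moment cone take the form $\nu_j=(\tilde v_j,m)$ with $\tilde v_j\in\Z^n$, and one sets $D:=\conv(\tilde v_1/m,\ldots,\tilde v_d/m)$. The residual freedom in the choice of integral basis is precisely the subgroup of $SL(n+1,\Z)$ preserving the hyperplane $\{x_{n+1}=m\}$, which by Remark \ref{rem:diagram} is $\Aff(n,\Z)$ acting on $\R^n$, so $D$ is well-defined up to the equivalence under which toric diagrams are identified.

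The main technical step, and the chief obstacle, is the bidirectional translation between the unimodular-facet condition on $D$ and the smoothness condition (ii) of Definition \ref{def:good} for $C$. The edges of $C$ correspond bijectively to the facets of $D$: an edge is cut out by $n$ facets of $C$ with normals $\nu_{j_1},\ldots,\nu_{j_n}$, indexed by vertices $v_{j_1},\ldots,v_{j_n}$ that span a facet of $D$. Given $g\in\Aff(n,\Z)$ carrying this facet of $D$ to $\conv(e_1/m,\ldots,e_n/m)$, the lift $\tilde g\in SL(n+1,\Z)$ supplied by Remark \ref{rem:diagram} maps each $\nu_{j_k}$ to $(e_k,m)$, and appending $e_{n+1}$ yields an integral basis of $\Z^{n+1}$ (the resulting matrix is triangular with determinant $1$). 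Conversely, any completion to an integral basis at an edge of $C$ produces such an $\Aff(n,\Z)$-equivalence for the corresponding facet of $D$. Smoothness at higher-codimension faces then follows automatically, since any subset of an integral basis extends to one, closing both the existence and the uniqueness arguments.
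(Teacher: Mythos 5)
Your argument follows the same route the paper (implicitly) takes: the paper offers no proof of Theorem \ref{thm:diagram} beyond Remark \ref{rem:diagram} and the citation of the $m=1$ case of \cite{abreu2016mean}, and the intended argument is precisely the combination of Theorem \ref{thm:good}, Proposition \ref{prop:c_1}/Corollary \ref{cor:c_1}, and the facet-of-$D$ / edge-of-$C$ dictionary that you spell out. One justification is off, though the conclusion is fine: the normals lying in the affine hyperplane $\{x_{n+1}=m\}$ shows that the cone they \emph{generate} is pointed (equivalently, that $C$ has non-empty interior), whereas strict convexity of $C$ itself is the statement that the $\nu_j$ span $\R^{n+1}$, which holds because $D$ has non-empty interior in $\R^n$. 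You should also record that the ``intersection of exactly $k$ facets'' clause in Definition \ref{def:good}(ii) is exactly where simpliciality of $D$ enters.
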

\noindent This correspondence specializes to \cite[Theorem 2.7]{abreu2016mean} when $m=1$. 

The $\TT^{n+1}$-action associates to every vector $\nu \in \R^{n+1}$ a contact vector field $R_\nu \in \Xx (M_D, \xi_D)$. 

\begin{defn} \label{def:reeb}
We will say that a contact form $\alpha_\nu \in \Omega^1 (M_D,\xi_D)$ is \emph{toric} if its Reeb vector field $R_{\alpha_\nu}$ 
satisfies
\[
R_{\alpha_\nu} = R_\nu \quad\text{for some $\nu\in\R^{n+1}$.}
\]
In this case we will say that $\nu\in\R^{n+1}$ is a \emph{toric Reeb vector} and that $R_\nu$ is a \emph{toric Reeb vector field}.

A \emph{normalized} toric Reeb vector is a toric Reeb vector $\nu\in\R^{n+1}$ of the form
\[
\nu = (mv,m) \quad\text{with $v\in\R^n$.}
\]
\end{defn}
\begin{prop}[\!{\!\!\cite{MSY} or \cite[Corollary 2.15]{abreu2012contact}}] \label{prop:reeb}
The interior of a toric diagram $D \subset \R^n$ parametrizes the set of normalized toric 
Reeb vectors on the $\QQ$-Gorenstein toric contact manifold $(M_D, \xi_D)$, i.e. $\nu = (mv,m)$ is
a normalized toric Reeb vector iff $v\in\textup{int} (D)$.
\end{prop}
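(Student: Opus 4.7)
The plan is to reduce the claim to the well-known characterization of toric Reeb vectors as the interior of the dual of the moment cone, and then observe that the $\QQ$-Gorenstein normalization $\nu=(mv,m)$ identifies this cone-theoretic condition with a polytope-theoretic one on $v\in\R^n$.

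First I would recall (as in \cite{MSY} and \cite[Corollary 2.15]{abreu2012contact}) that a vector $\nu\in\R^{n+1}$ generates a toric Reeb vector field on $(M_D,\xi_D)$ if and only if the pairing $\langle\mu_C(w),\nu\rangle$ is strictly positive for every nonzero $w\in W_C$. Indeed, on the symplectization $(W_C,\om_C,X_C)$ the candidate contact form with Reeb vector field $R_\nu$ is $\alpha_\nu=\iota_{X_C}\om_C/\langle\mu_C,\nu\rangle$, so it is well defined and positive precisely when the denominator does not vanish. Since $\mu_C(W_C)\cup\{0\}=C$, this translates to $\langle x,\nu\rangle>0$ for every $x\in C\setminus\{0\}$, i.e.\ to $\nu$ lying in the interior of the dual cone $C^\vee\subset\R^{n+1}$.

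Next I would identify $C^\vee$ explicitly. By the defining condition $C=\bigcap_j\{x:\langle x,\nu_j\rangle\ge 0\}$ of Definition~\ref{def:good}, duality yields $C^\vee=\mathrm{cone}(\nu_1,\ldots,\nu_d)$, the convex conic hull of the defining normals. Under the $\QQ$-Gorenstein normalization of Corollary~\ref{cor:c_1}, all the normals have the form $\nu_j=(mv_j,m)$, so they all lie on the affine hyperplane $H=\R^n\times\{m\}\subset\R^{n+1}$, and therefore the slice $C^\vee\cap H$ equals $\mathrm{conv}(\nu_1,\ldots,\nu_d)=mD\times\{m\}$.

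Finally, any normalized toric Reeb candidate $\nu=(mv,m)$ automatically lies in $H$. Since $H$ is transverse to the cone $C^\vee$ and passes through all its generators, $\nu$ belongs to the interior of $C^\vee$ if and only if its projection $mv$ belongs to the relative interior of $mD$, i.e.\ if and only if $v\in\inte(D)$. The only delicate point is the correspondence between the interior of the cone $C^\vee$ and the relative interior of its hyperplane cross-section, but this is standard for affine slices of a convex cone by a hyperplane transverse to the cone, which is the situation here because $H$ is cut out by the last coordinate and every generator $\nu_j$ has last coordinate $m>0$. Combining the three steps yields the stated parametrization.
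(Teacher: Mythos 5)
Your argument is correct and is essentially the intended one: the paper offers no proof of this proposition, only the citations, and those references establish precisely your first step (toric Reeb vectors $=$ interior of the dual cone $C^\vee$). Your remaining steps --- identifying $C^\vee=\mathrm{cone}(\nu_1,\ldots,\nu_d)$ and slicing by the hyperplane $\{x_{n+1}=m\}$, which meets the full-dimensional cone transversally so that $\inte(C^\vee)\cap H$ is the relative interior of $mD\times\{m\}$ --- are the standard translation to the toric-diagram picture, so nothing further is needed.
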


\section{Conley-Zehnder index}

In this section we describe how the explicit method to compute the Conley-Zehnder index of any 
closed toric Reeb orbit on a Gorenstein toric contact manifold, described in~\cite[section 5]{abreu2012contact} 
and~\cite[section 3]{abreu2018contact}, also applies to $\QQ$-Gorenstein toric contact manifolds.

Given a toric diagram $D = \conv (v_1, \ldots, v_d) \subset \R^n$ of order $m\in\NN$ and corresponding 
$\QQ$-Gorenstein toric contact manifold $(M_D, \xi_D)$, consider a toric Reeb vector field $R_\nu \in \Xx (M_D, \xi_D)$ 
determined by the normalized toric Reeb vector (cf. Proposition~\ref{prop:reeb})
\[
\nu = (mv,m) \quad\text{with}\quad v = \sum_{j=1}^d a_j v_j\,,\ a_j\in\R^+\,,\ j=1, \ldots, d\,,\ \text{and}\ 
\sum_{j=1}^d a_j = 1\,.
\]
By a small abuse of notation, we will also write
\[
R_\nu = \sum_{j=1}^d a_j \nu_j \,,
\]
where $\nu_j = (mv_j,m)$, $j=1,\ldots,d$, are the defining normals of the associated good moment
cone $C\subset\R^{n+1}$.
Making a small perturbation of $\nu$ if necessary, we can assume that
\[
\text{the $1$-parameter subgroup generated by $R_\nu$ is dense in $\TT^{n+1}$,}
\]
which means that if $mv = (r_1, \ldots,  r_n)$ then $m,r_1,\ldots,r_n$'s are $\QQ$-independent.
This is equivalent to the corresponding toric contact form being non-degenerate. 
In fact, the toric Reeb flow of $R_\nu$ on $(M_D,\xi_D)$ has exactly $m$ simple closed
orbits $\gamma_1, \ldots,\gamma_m$, all non-degenerate, corresponding to the $m$ edges
$E_1,\ldots,E_m$ of the cone $C$, i.e. one non-degenerate closed simple toric $R_\nu$-orbit
for each $S^1$-orbit of the $\TT^{n+1}$-action on $(M_D,\xi_D)$. Equivalently, there is
\[
\text{one non-degenerate closed simple toric $R_\nu$-orbit for each facet of the toric diagram $D$.}
\]
Let $\gamma$ denote one of those non-degenerate closed simple toric $R_\nu$-orbits and assume 
without loss of generality that the vertices of the corresponding facet, necessarily a simplex, are 
$v_1, \ldots, v_n$. Let $h\in\R^n$ and $k\in\Z$ be such that
\[
\{ \nu_1 = (mv_1,m), \ldots, \nu_n = (mv_n,m), \eta = (mh, k)\} \ \text{is a $\Z$-basis of $\Z^{n+1}$.}
\]
Then $R_\nu$ can be uniquely written as
\[
R_\nu = \sum_{j=1}^n b_j \nu_j + b \eta\,,\ \text{with}\ b_1,\ldots, b_n\in \R\ \text{and}\ 
b = \frac{m}{k} \left(1 - \sum_{j=1}^n b_j \right) \ne 0\,.
\]

When $m=k=1$, as shown in~\cite[section 5]{abreu2012contact} and~\cite[section 3]{abreu2018contact},
the Conley-Zehnder index of $\gamma^{N}$, for any $N\in\NN$, is given by
\begin{equation} \label{eq:CZindex0}
\mu_{\rm CZ} (\gamma^{N}) = 2 \left( \sum_{j=1}^n \left\lfloor N \frac{b_j}{|b|}\right\rfloor
 + N \frac{b}{|b|} \sum_{j=1}^d \teta_j \right) + n\,,
\end{equation}
where $\teta\in\Z^d$ is such that
\[
\beta (\teta) = \eta - \frac{\beta (g)}{2\pi}\,,\ \text{for some $g\in K\cap SU(d)$,}
\]
where $K$ is given by~(\ref{eq:defK}) and the map $\beta: \Z^{d} \to \Z^{n+1}$ is defined 
by~(\ref{eq:defBeta}). This implies in particular that
\[
\sum_{j=1}^d \teta_j  = 1
\]
and so
\[
\mu_{\rm CZ} (\gamma^{N}) = 2 \left( \sum_{j=1}^n \left\lfloor N \frac{b_j}{|b|}\right\rfloor
 + N \frac{b}{|b|} \right) + n\,.
 \]

When $m>1$ we can still use formula~(\ref{eq:CZindex0}) to compute $\mu_{\rm CZ} (\gamma^{N})$.
In this case, we have that $\teta\in\Z^d$ is such that
\[
\beta (\teta) = \eta - \frac{\beta (g)}{2\pi}\,,\ \text{for some $g\in K\cap U(d)$,}
\]
but we might not be able to choose $g\in SU(d)$. This means that
\[
m \sum_{j=1}^d \teta_j  = k - r\,,
\]
where $r\in\Z$ is only defined up to a multiple of $m$. This ambiguity, coming from the ambiguity in
the choice of $g\in K$ or, equivalently, in the choice of closing path for the lifted orbit in $\CC^d$, can
be fixed as explained in~\cite{convex}. This implies that formula~(\ref{eq:CZindex0}) remains valid for any 
$m\in\NN$ by considering that
\[
\sum_{j=1}^d \teta_j  = \frac{k}{m}\,.
\]
Hence, we have that
\begin{equation} \label{eq:CZindexQ}
\mu_{\rm CZ} (\gamma^{N}) = 2 \left( \sum_{j=1}^n \left\lfloor N \frac{b_j}{|b|}\right\rfloor
 + N \frac{b}{|b|} \frac{k}{m} \right) + n\,.
\end{equation}

\subsection{Example}
\label{ex:torsion}

Consider the unit cosphere bundle of $S^3$ with its standard contact structure. This is a
Gorenstein toric contact manifold that can also be seen as the prequantization of $S^2 \times S^2$
with split symplectic form with area $2\pi$ on each $S^2$-factor. By considering prequantizations
of $S^2 \times S^2$ with split symplectic form with area $2\pi k$ on each $S^2$-factor, $k\in\NN$, 
we are looking at toric contact manifolds obtained as $\ZZ_k$ quotients of the unit 
cosphere bundle of $S^3$ with its standard contact structure. 

The first such quotient which is not Gorenstein is for $k=3$, i.e. the prequantization of 
$S^2 \times S^2$ with split symplectic form with area $6\pi$ on each $S^2$-factor. Viewed
this way, this toric contact manifold has moment cone with normals
\[
(1, 0, 0)\,,\ (0, 1, 0)\,,\ (-1, 0, 3)\ \text{and}\ (0, -1, 3)\,.
\]
The linear map given by the matrix
$$
\begin{bmatrix}
1 & 1 & 1\\ 
2 & 1 & 1 \\
3 & 3 & 2
\end{bmatrix}
\in SL(3,\ZZ)
$$
sends these normals to (up to ordering)
\[
\nu_1 = (1, 1, 3)\,,\ \nu_2 = (1, 2, 3)\,,\ \nu_3 = (2, 2, 3)\ \text{and}\ \nu_4 = (2, 1, 3)\,.
\]
Hence, we have the toric diagram $D = \conv (v_1, v_2, v_3, v_4)$ of order $m=3$, where
\[
v_1 = (1/3, 1/3)\,,\ v_2 = (1/3, 2/3)\,,\ v_3 = (2/3, 2/3)\ \text{and}\ v_4 = (2/3, 1/3)\,.
\]

Consider the normalized toric Reeb vector
\[
\nu = (1 + \varepsilon_1, 1 + \varepsilon_2, 3) \,,\  \text{with}\ 0 < \varepsilon_1 < \varepsilon_2\,,
\]
and denote by $\gamma_1, \gamma_2,\gamma_3$ and $\gamma_4$ its simple closed orbits corresponding to the facets
of $D$ with vertices $(v_1, v_2), (v_2,v_3), (v_3, v_4)$ and $(v_4,v_1)$, respectively.
With this $\nu$ and arbitrarily small $0 < \varepsilon_1 < \varepsilon_2$, we have that $\mu_{CZ} (\gamma_1^N)$ and
$\mu_{CZ} (\gamma_4^N)$ can be made arbitrarily large for any $N\in\NN$, hence the contact Betti numbers can
be determined by computing $\mu_{CZ} (\gamma_2^N)$ and $\mu_{CZ} (\gamma_3^N)$.

To compute $\mu_{CZ} (\gamma_2^N)$, note that $\nu_2$ and $\nu_3$ can be completed to a $\ZZ$-basis
with $\eta = (0, 1, k=1)$. We then have that
\[
\nu = (1 + \varepsilon_1, 1 + \varepsilon_2, 3) = (3 - \varepsilon_1 - 2\varepsilon_2) \nu_2 +
(-1 + \varepsilon_1 + \varepsilon_2) \nu_3 + (-3 + 3\varepsilon_2) \eta\,.
\]
It follows from~(\ref{eq:CZindexQ}) that
\begin{align*}
\mu_{CZ} (\gamma_2^N) & = 2 \left( \left\lfloor N \cdot \frac{3 - \varepsilon_1 - 2\varepsilon_2}{3 -3\varepsilon_2} \right\rfloor  
+  \left\lfloor N \cdot \frac{-1 + \varepsilon_1 + \varepsilon_2}{3 - 3\varepsilon_2} \right\rfloor - \frac{N}{3}\right) + 2  \\
& = 2 \left( \left\lfloor N \left(1 + \delta_1 \right) \right\rfloor  + \left\lfloor N \left(-\frac{1}{3} + \delta_2  \right)\right\rfloor - \frac{N}{3} \right) + 2\,,\ 
\text{for arbitrarily small $\delta_1, \delta_2 > 0$,}
\end{align*}
which means that the contribution of $\gamma_2$ and its iterates to the contact Betti numbers is
\[
1\ \text{in degrees} \ \frac{4}{3} \ \text{and}\ \frac{8+2k}{3}\,,\ k\in\NN_0\,.
\]

To compute $\mu_{CZ} (\gamma_3^N)$, note that $\nu_3$ and $\nu_4$ can be completed to a $\ZZ$-basis
with $\eta = (1, 0, k=1)$. We then have that
\[
\nu = (1 + \varepsilon_1, 1 + \varepsilon_2, 3) = (-1 + \varepsilon_1 + \varepsilon_2) \nu_3 +
(3 - 2 \varepsilon_1 - \varepsilon_2) \nu_4 + (-3 + 3\varepsilon_1) \eta\,.
\]
It follows from~(\ref{eq:CZindexQ}) that
\begin{align*}
\mu_{CZ} (\gamma_3^N) & = 2 \left( \left\lfloor N \cdot \frac{-1 + \varepsilon_1 +\varepsilon_2}{3 -3\varepsilon_1} \right\rfloor  
+  \left\lfloor N \cdot \frac{2 - 2 \varepsilon_1 - \varepsilon_2}{3 - 3\varepsilon_1} \right\rfloor - \frac{N}{3}\right) + 2  \\
& = 2 \left( \left\lfloor N \left(-\frac{1}{3} + \delta_1  \right)\right\rfloor + \left\lfloor N \left(1 - \delta_2 \right) \right\rfloor - \frac{N}{3} \right) + 2\,,\ 
\text{for arbitrarily small $\delta_1, \delta_2 > 0$,}
\end{align*}
which means that the contribution of $\gamma_3$ and its iterates to the contact Betti numbers is
\[
1\ \text{in degrees} \ -\frac{2}{3}\,,\ \frac{2}{3}\,,\ \frac{4}{3}\,,\ \frac{6}{3} \ \text{and}\ \frac{8+2k}{3}\,,\ k\in\NN_0\,.
\]

We conclude that
\[
cb_{2j/3} (D) =
\begin{cases}
1 &\textup{ if } j = -1,\,1,\,3\\
2  & \textup{ if } j = 2,\, 4+k,\, k\in\NN_0\\
0 &\textup{ otherwise.}
\end{cases}
\]

Let us now use this example to illustrate the content of Theorem~\ref{thm:invariants}. Since we already know the
contact Betti numbers, it gives us the $\delta$-vector of $D$:
\[
cb_{-8/3} (D) = 0\,,\ cb_{-2/3} (D) = 1 \ \text{and} \ cb_{4/3} (D) = 2 \Rightarrow \delta_7 (D) = \delta_4 (D) = 1\,,
\]
\[
cb_{-4/3} (D) = 0\,,\ cb_{2/3} (D) = 1 \ \text{and} \ cb_{8/3} (D) = 2 \Rightarrow \delta_5 (D) = \delta_2 (D) = 1\,,
\]
\[
cb_0 (D) = 0\,,\ cb_2 (D) = 1 \ \text{and} \ cb_4 (D) = 2 \Rightarrow \delta_3 (D) = \delta_0 (D) = 1
\]
and all other $\delta_k (D)$ are equal to zero. This means that the Ehrhart quasi-polynomial of 
\[
D = \conv ((1/3, 1/3), (1/2,2/3), (2/3,2/3), (2/3,1/3)) \subset \RR^2
\]
must be given by
\begin{align*}
t\equiv 1 \mod 3  \Rightarrow L_D (t) & = \delta_1 (D) \binom{\frac{t-1}{3}+2}{2} + \delta_4 (D) \binom{\frac{t-4}{3}+2}{2} + 
\delta_7 (D) \binom{\frac{t-7}{3}+2}{2} \\ 
& = \frac{1}{2} \left( \frac{t-4}{3} + 2 \right) \left( \frac{t-4}{3} + 1 \right) + \frac{1}{2} \left( \frac{t-7}{3} + 2 \right) \left( \frac{t-7}{3} + 1 \right) \\
& = \frac{1}{9} (t-1)^2\,,
\end{align*} 
\begin{align*}
t\equiv 2 \mod 3  \Rightarrow L_D (t) & = \delta_2 (D) \binom{\frac{t-2}{3}+2}{2} + \delta_5 (D) \binom{\frac{t-5}{3}+2}{2} + 
\delta_8 (D) \binom{\frac{t-8}{3}+2}{2} \\ 
& = \frac{1}{2} \left( \frac{t-2}{3} + 2 \right) \left( \frac{t-2}{3} + 1 \right) + \frac{1}{2} \left( \frac{t-5}{3} + 2 \right) \left( \frac{t-5}{3} + 1 \right) \\
& = \frac{1}{9} (t+1)^2\,,
\end{align*} 
\begin{align*}
t\equiv 0 \mod 3  \Rightarrow L_D (t) & = \delta_0 (D) \binom{\frac{t}{3}+2}{2} + \delta_3 (D) \binom{\frac{t-3}{3}+2}{2} + 
\delta_6 (D) \binom{\frac{t-6}{3}+2}{2} \\ 
& = \frac{1}{2} \left( \frac{t}{3} + 2 \right) \left( \frac{t}{3} + 1 \right) + \frac{1}{2} \left( \frac{t-3}{3} + 2 \right) \left( \frac{t-3}{3} + 1 \right) \\
& = \frac{1}{9} (t+3)^2\,,
\end{align*} 
which is indeed the case.

\section{Contact homology from Ehrhart theory}
\label{sec: chfromehrhart}

In this section we will prove our first main result, Theorem \ref{thm:invariants}, which establishes a relation between the contact homology of a toric contact manifold and the Ehrhart series of its toric diagram.

Take a toric diagram $D=\textup{conv}(v_1, \ldots, v_d)\subseteq \RR^n$ so that $mD$ is integral and let $(M, \xi)$ 
be the corresponding contact manifold. Consider a Reeb vector field determined by $\nu=(mv,m)$ with $v\in \inte D$ 
and $\QQ$-independent coordinates, as explained in Proposition \ref{prop:reeb}. Given such a Reeb, we define in Definition \ref{def:Betti_Euler} the contact Betti numbers $cb_j(D, \nu)$ by counting closed Reeb orbits with fixed degree. We restate Theorem \ref{thm:invariants} for the convenience of the reader and prove it. 

\begin{theorem}
Let $D\subseteq \RR^n$ be a toric diagram and let $(M, \xi)$ be the $m$-Gorenstein toric contact manifold associated to $D$. Then 
$$cb_{2j}(D, \nu)-cb_{2(j-1)}(D, \nu)=\delta_{m(n-j)}$$
where $(\delta_0, \ldots, \delta_{m(n+1)-1})$ is the $\delta$-vector of $\Delta$.  In particular, the contact Betti numbers $cb_{2j}(D, \nu)$ 
do not depend on the choice of toric Reeb vector $\nu$.
\label{main}
\end{theorem}
\begin{proof}
Given a facet $\ell$ of $D$ let $v_{\ell_1}, \ldots, v_{\ell_n}$ be the corresponding 
vertices and let $\eta_\ell=(m h_\ell, k)$ complete 
$$\nu_{\ell_1}=(mv_{\ell_1}, m), \ldots, \nu_{\ell_n}=(mv_{\ell_n}, m)$$ 
to a $\ZZ^{n+1}$-basis. Let $D_\ell=\textup{conv}(v, v_{\ell_1}, \ldots, v_{\ell_n})$. 
We let $b_1^\ell, \ldots, b_n^\ell, b^\ell\in \RR$ be such that
$$\nu=\sum_{i=1}^n b_i^\ell \nu_{\ell_i}+b^\ell \eta_\ell.$$
Equivalently,
$$v=\sum_{i=1}^\ell b_i^\ell v_{\ell_i}+b^\ell h_\ell\textup{ and }\sum_{i=1}^n b_i^\ell+\frac{k}{m}b^\ell=1.$$
By appropriately choosing $\eta_\ell$ we may assume $b^\ell>0$. For $t\in \ZZ^+$ we will denote
$$\iota_t=\#\left(\inte D\cap \frac{1}{t}\ZZ^n\right)=L_{\inte D}(t)=(-1)^n L_D(-t).$$

We will compute $\iota_t$ using the contact Betti numbers of $(D, \nu)$. Note that because the coordinates of $v$ are 
$\QQ$-independent there are no points in $\inte D\cap \frac{1}{t}\ZZ^n$ on $\partial D_\ell$, hence
$$\iota_t=\sum_{\ell} \#\left(\inte D_\ell \cap \frac{1}{t}\ZZ^n\right).$$

Take a point $p\in \inte D_\ell$. Such a point can be written uniquely as
\begin{align*}p&=\sum_{i=1}^n \alpha_i v_{\ell_i}+\alpha v
\end{align*}
with $\alpha_i, \alpha>0$ and $\sum_{i=1}^n \alpha_i+\alpha=1$. Then
\begin{align*}
(mp, m)&=\sum_{i=1}^n \alpha_i \nu_{\ell_i}+\alpha \nu\\
&=\sum_{i=1}^n (\alpha_i+\alpha b_i^\ell) \nu_{\ell_i}+\alpha b^\ell \eta^\ell.
\end{align*}
Hence, since $\nu_{\ell_1}, \ldots, \nu_{\ell_n}, \eta$ is a $\ZZ$-basis, $tp\in \ZZ^n$ if and only if $\alpha_i, \alpha$ 
are such that
$$t \alpha b^\ell\in m\ZZ \textup{ and }t(\alpha_i+\alpha b_i^\ell)\in m\ZZ \textup{ for }i=1, \ldots, n.$$

We now fix $N\in \ZZ^+$ and count the number of points $p$ in $\inte D_\ell\cap \frac{1}{t}\ZZ^n$ with $t\alpha b^\ell=mN$, 
that is, with $\alpha=\frac{mN}{tb^\ell}$. Then $t(\alpha_i+\alpha b_i^\ell)\in m\ZZ$ if and only if there is $m_i\in \ZZ$ such that 
$$\alpha_i=\frac{m}{t}\left(1-\left\{\frac{t\alpha b_i^\ell}{m}\right\}+m_i\right)=\frac{m}{t}\left(1-\left\{\frac{N b_i^\ell}{b^\ell}\right\}+m_i\right).$$
Moreover $\alpha_i>0$ if and only if $m_i\geq 0$.
Now $\sum_{i=1}^n \alpha_i^\ell+\alpha=1$ if and only if

\begin{align*}
1&=\frac{m}{t}\left( \sum_{i=1}^n\left(1-\left\{\frac{N b_i^\ell}{b^\ell}\right\}+m_i\right)+\frac{N}{b^\ell}\right)\\
&=\frac{m}{t}\left(\frac{1}{2} \deg \gamma_\ell^N +1+\sum_{i=1}^n m_i\right)
\end{align*}
which is equivalent to
$$\sum_{i=1}^n m_i=\frac{t}{m}-\frac{1}{2}\deg \gamma_\ell^N -1.$$
Above we used formula \eqref{eq:CZindexQ} to express the degree of $\gamma_\ell^N$ as
$$\frac{1}{2}\deg \gamma_\ell^N =\frac{N}{b}-\sum_{i=1}^n \left\{N\frac{b_i^\ell}{b^\ell}\right\}+n-1.$$

The number of non-negative integer solutions to such equations is found with the following well-known combinatorial lemma:

\begin{lemma}
Given $n, S\in \ZZ^+$ the number of solutions of 
$$\sum_{i=1}^n m_i=S \textup{ with }m_i\in \ZZ^+_0$$
is given by $\binom{S+n-1}{n-1}$.
\end{lemma}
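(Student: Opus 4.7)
The statement is the classical ``stars and bars'' identity, and the plan is to prove it via an explicit bijection. First I would set up a bijection between the solution set $\{(m_1,\ldots,m_n)\in (\ZZ_0^+)^n : \sum_i m_i = S\}$ and the collection of binary strings of length $S+n-1$ containing exactly $n-1$ ``bars'' and $S$ ``stars.'' Concretely, to a tuple $(m_1,\ldots,m_n)$ I associate the string consisting of $m_1$ stars, then a bar, then $m_2$ stars, then a bar, and so on, ending with $m_n$ stars; the total length is $m_1+\cdots+m_n + (n-1) = S+n-1$, and it contains exactly $n-1$ bars by construction.

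Next I would exhibit the inverse: given any binary string of length $S+n-1$ with exactly $n-1$ bars, the bars partition the string into $n$ (possibly empty) blocks of consecutive stars, and letting $m_i$ denote the length of the $i$-th block produces a non-negative integer tuple with $\sum_i m_i = S$. The two assignments are manifestly mutually inverse. It then remains to count the binary strings in question, which is immediate: choosing the positions of the $n-1$ bars among the $S+n-1$ available slots gives exactly $\binom{S+n-1}{n-1}$ strings.

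As an alternative, one could proceed by induction on $n$: the base case $n=1$ is trivial, and the inductive step follows by conditioning on the value of $m_n \in \{0,1,\ldots,S\}$ and invoking the hockey-stick identity $\sum_{k=0}^{S} \binom{k+n-2}{n-2} = \binom{S+n-1}{n-1}$. However, the bijective argument above is cleaner, self-contained, and avoids any auxiliary identity. There is no substantial obstacle here; the lemma is a textbook combinatorial fact and the only ``content'' of the proof is setting up the encoding correctly so that empty blocks (corresponding to some $m_i = 0$) are handled on the same footing as non-empty ones.
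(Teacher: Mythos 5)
Your stars-and-bars bijection is correct and is essentially identical to the paper's proof: the positions of your $k$-th bar in the string are exactly the elements $\sum_{j=1}^k m_j + k$ of the $(n-1)$-element subset of $\{1,\ldots,S+n-1\}$ that the paper uses. Same argument, just phrased with strings instead of subsets.
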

\begin{proof}
The map from the family of such solutions to the family of subsets of $\{1, \ldots, S+n-1\}$ with $n-1$ elements that sends a 
solution $(m_1, \ldots, m_n)$ to the set 
$$\left\{\sum_{j=1}^k m_j+k: k=1, 2, \ldots, n-1\right\}\subseteq \{1, \ldots, S+n-1\}.$$
is easily seen to be a well defined bijection.\qedhere
\end{proof}

Hence, the number of points in $\inte D_\ell\cap \frac{1}{t}\ZZ^n$ with fixed $N$ is $\binom{\frac{t}{m}-\frac{1}{2}\deg \gamma_\ell^N+n-2}{n-1}$ 
where we interpret the binomial coefficient to be zero if $\frac{t}{m}-\frac{1}{2}\deg \gamma_\ell^N$ is not an integer (we remind the reader that 
the degree is not necessarily an integer). Therefore
\begin{align*}L_t(\inte D_\ell)&=\sum_{N\geq 1}\binom{\frac{t}{m}-\frac{1}{2}\deg \gamma_\ell^N+n-2}{n-1}\\
&=\sum_{j\in \frac{1}{m}\ZZ}\left(\#\left\{N\geq 1: \deg \gamma_\ell^N=2j\right\}\right)\binom{\frac{t}{m}-j+n-2}{n-1}.
\end{align*}
Summing over every facet $\ell$ it follows that 
$$\iota_t=\sum_{j\in \frac{1}{m}\ZZ}\binom{\frac{t}{m}-j+n-2}{n-1}cb_{2j}(D, \nu).$$
We now use  generating functions to recover $cb_{2j}= cb_{2j}(D, \nu)$. We compute the Ehrhart series of $\inte \Delta$ in terms of the 
contact Betti numbers:
\begin{align*}\Ehr_{\inte \Delta}(z)&=\sum_{t\geq 0}\iota_{t+m}z^t=\sum_{t\geq 0}\left(\sum_{j\in \frac{1}{m}\ZZ}\binom{\frac{t}{m}-j+n-1}{n-1}cb_{2j}\right)z^t.
\end{align*}
For the binomial coefficient to be non-zero we must have $t=mj+mi$ with $i\in \ZZ_{\geq 0}$; using such substitution, the Ehrhart series 
becomes
\begin{align}\Ehr_{\inte \Delta}(z)&=\sum_{j\in \frac{1}{m} \ZZ}cb_{2j}z^{mj}\sum_{i\geq 0}\binom{i+n-1}{n-1}z^{mi}=\frac{1}{(1-z^m)^n}\left(\sum_{j\in \frac{1}{m} \ZZ}cb_{2j}z^{mj} \right).\label{eq: ehrfromCH}
\end{align}
Note that we used in the last step the identity $\sum_{i\geq 0}\binom{i+n-1}{n-1} z^{mi}=\frac{1}{(1-z^m)^n}$.

The result follows from comparing \eqref{ehrinterior} and \eqref{eq: ehrfromCH}. More precisely, we evaluate the coefficient\footnote{We denote by $[z^j]F(z)$ the degree $j$ coefficient of a polynomial $F(z)$.}
$$[z^{mj}](1-z^m)^{n+1}\Ehr_{\inte \Delta}(z)\,,$$ 
in two different ways. Using equation \eqref{ehrinterior} this coefficient is equal to $\delta_{mn-mj}$. On the other hand, using 
\eqref{eq: ehrfromCH} it is equal to
$$[z^{mj}](1-z^m)\left(\sum_{j\in \frac{1}{m} \ZZ}cb_{2j}z^{mj} \right)=cb_{2j}-cb_{2(j-1)}.\qedhere $$
\end{proof}

As corollary we get that the contact Betti numbers stabilize at the normalized volume of~$D$.

\begin{theorem} \label{thm:combinatorial}
Let $D\subseteq \RR^n$ be a toric diagram and let $(M, \xi)$ be the $m$-Gorenstein toric contact manifold associated to $D$. Given 
$j\in \frac{1}{m}\ZZ \cap \left]-1,0\right]$, the sequence
$$\{cb_{2j+2a}(D)\}_{a\in \ZZ}$$
is monotonically increasing and stabilizes for large $a$. More precisely,
$$cb_{2j+2a}(D)=n!\vol(mD)\textup{ for }a\geq n.$$
\label{stabilization}
\end{theorem}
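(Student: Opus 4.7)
The plan is to derive the statement directly from Theorem~\ref{main} together with Stanley's non-negativity $\delta_k(D)\geq 0$ and the leading-coefficient identity for the Ehrhart quasi-polynomial. Theorem~\ref{main} gives, for every $a\in\ZZ$,
\[
cb_{2(j+a)}(D)-cb_{2(j+a-1)}(D)=\delta_{m(n-j-a)}(D)\geq 0,
\]
which immediately yields the monotonicity. Since $\delta_q(D)=0$ for $q\notin\{0,\ldots,m(n+1)-1\}$, and for $a\geq n+1$ with $j\in\left]-1,0\right]$ we have $m(n-j-a)\leq m(-j-1)<0$, the difference vanishes; hence the sequence is constant from $a=n$ onwards.

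It remains to evaluate $cb_{2j+2n}(D)$. Applying equation~\eqref{eq:invariants} at $a=n$,
\[
cb_{2j+2n}(D)=\sum_{k=0}^{n}\delta_{m(n-j-k)}(D).
\]
As $k$ runs through $\{0,\ldots,n\}$, the index $m(n-j-k)$ traverses $\{-mj,\,-mj+m,\,\ldots,\,-mj+mn\}$. The hypothesis $j\in\frac{1}{m}\ZZ\cap\left]-1,0\right]$ forces $-mj\in\ZZ\cap[0,m)$, so these $n+1$ values are exactly the integers in $\{0,\ldots,m(n+1)-1\}$ lying in the residue class of $-mj$ modulo $m$.

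Finally, I would identify this sum with $n!\vol(mD)$ via the Ehrhart basis expansion~\eqref{Ehrhartbasis}. Each summand $\delta_k(D)\binom{(t-k)/m+n}{n}$ contributes $\delta_k(D)/(n!\,m^n)$ to the coefficient of $t^n$ on the branch $t\equiv k\pmod m$, and by Theorem~\ref{Ehrhart} this leading coefficient equals $\vol(mD)/m^n$ on every branch. Therefore, for each residue $r$,
\[
\sum_{\substack{0\leq k\leq m(n+1)-1\\ k\equiv r\pmod m}}\delta_k(D)=n!\vol(mD),
\]
and taking $r\equiv -mj\pmod m$ gives $cb_{2j+2n}(D)=n!\vol(mD)$. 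The argument is essentially bookkeeping; the only subtlety is verifying that the telescoped indices $\{-mj,\ldots,-mj+mn\}$ exhaust exactly one residue class modulo $m$ inside $\{0,\ldots,m(n+1)-1\}$, which is precisely why the range $\left]-1,0\right]$ is the natural parametrization for $j$.
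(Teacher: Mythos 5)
Your proof is correct and follows essentially the same route as the paper: telescoping via Theorem~\ref{main}, Stanley non-negativity for monotonicity, vanishing of $\delta_{m(n-j-a)}$ for $a\geq n+1$, and identification of the residue-class sum of the $\delta$-vector with the leading Ehrhart coefficient. The only difference is cosmetic — you extract the leading coefficient in the original variable $t$ (with the $m^n$ factors cancelling), whereas the paper reads it off the branch polynomial $L_\Delta(mt-mj)$ directly.
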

\begin{proof}
It follows from theorem \ref{main} that
$$cb_{2j+2a}(D)=\sum_{i=0}^a \delta_{m(n-j-i)}$$
so monotonicity follows immediately. For $a\geq n$,
$$cb_{2j+2a}(D)=\sum_{i=0}^n \delta_{m(n-j-i)}.$$
By \eqref{Ehrhartbasis}, $\frac{1}{n!}\left(\sum_{i=0}^n \delta_{m(n-j-i)}\right)$ is the leading coefficient of the polynomial 
$L_\Delta(mt-mj)$ which is equal to $\vol(mD)$.
\end{proof}

\begin{remark}
The stabilization result implies that the mean Euler characteristic of $(M, \xi)$, defined in this case by
$$\chi(D, \nu)=\lim_{N\to +\infty} \frac{1}{2N} \sum_{j\in [0, N]} cb_{2j}(D, \nu),$$
is given by $\frac{n!}{2}m\textup{vol}(mD)$. The Gorenstein case $m=1$ was proven by the first two authors in~\cite{abreu2016mean}. 
It was also shown that the mean Euler characteristic was the orbifold Euler characteristic of any crepant toric symplectic filling 
(using \cite{batyrev1996strong}); this fact also follows from Theorem \ref{stapledon}.
\end{remark}

Theorem \ref{main} can be used to give direct combinatorial interpretations for the dimensions of other contact homology groups.

\begin{remark} \label{rem:combinatorial}
\leavevmode
\begin{enumerate}
\item Since $\delta_0=1$ it also follows that $$cb_{2(n-1)}=n!\vol(mD)-1.$$
\item We have
$$cb_0(D)=\delta_{mn}=L_{\inte D}(m)=\#\left(\inte(mD)\cap \ZZ^n\right),$$
where the second equality follows by making $z=0$ in~(\ref{ehrinterior}).
\end{enumerate}
\end{remark}

\section{Resolutions of the symplectic cone}
\label{resolutions}

Recall from section \ref{toriccontact} that associated to a toric diagram $D$ we have a toric symplectic cone $(W, \omega, X, \mu)$. Its moment cone is $C=\mu(W)\cup \{0\}$. The symplectic cone $W$ can be compactified near $\mu^{-1}(B_\epsilon(0))$ by adding a single point corresponding to the vertex $0$ of the moment cone $C$; this creates a toric singularity that can be described in algebraic geometric terms as a toric variety. 

Given the toric diagram $D\subseteq \RR^n$, its cone $\sigma$ is the cone over $D\times \{1\}$ in $\RR^{n+1}$, that is, 
$$\sigma=\{(tx, t): x\in D, t\geq 0\}\subseteq \RR^{n+1}.$$
Note that this is the cone generated by $\nu_i=(mv_i, m)$ where $\nu_i$ are the vectors normal to the facets of $C$ and $v_i$ are the vertices of $D$, hence $\sigma$ is the dual of the cone $C$. Associated to $\sigma$ we have an affine toric variety $\overline W=X_\sigma$ which is the union of $W$ with the singularity (see \cite{cox}). 

Now a subdivision $\T$ of $D$ induces a fan $\Sigma$ refining the (fan given by the) cone $\sigma$, and hence gives a toric variety $X_\Sigma$ and a (partial) resolution of the singularity $X_\Sigma\to X_\sigma=\overline W$; we call this the fan over $\T$. The fan $\Sigma$ consists in the family of cones over $\theta\times \{1\}$ where $\theta\in \T$; in particular there is a correspondence between $\T_d$ and $\Sigma(d+1)$. When $\T$ is a rational triangulation (i.e., $m\T\subseteq \ZZ^n$ where $m$ is the order of the toric diagram $D$) such a resolution is a toric crepant resolution since every generator of cones in $\Sigma$ has last coordinate $m$ (cf. \cite[Proposition 11.2.8.]{cox}). 

The resolution $X_\Sigma$ is smooth if and only if the minimal generators of every cone of $\Sigma$ can be extended to a basis of $\ZZ^{n+1}$, which is equivalent to $m=1$ and the subdivision $\T$ being a unimodular triangulation. If $m>1$ or $m=1$ and $\T$ is a non-unimodular triangulation then $X_\Sigma$ is an orbifold. 
Note that every polytope admits a triangulation and when $n\leq 2$ it always admits a unimodular triangulation, but for $n>2$ that is no longer true; in particular not every such cone $\overline W$ admits a smooth crepant toric resolution.

\begin{remark}
In \cite{mclean2016reeb} McLean proved a result that implies that the minimal discrepancy of the isolated singularity in $X_\sigma$ is half the degree of the first non-trivial contact homology group. By Theorem \ref{main} this is the same as the smallest $r$ for which there is an integral point in $(r+1)\textup{int}\, (mD)$. In particular the singularity is terminal if and only if there are no integral points in the interior of $mD$. Thus, if the singularity is not terminal, it admits a (partial) toric resolution given by the star subdivision centered at some integral point.
\end{remark}

\subsection{Symplectic structure on $X_\Sigma$}
\label{Xsigma}

We explain here how we can give symplectic structures to $X_\Sigma$, which a priori is an abstract algebraic variety. When we give a (toric) symplectic structure to $X_\Sigma$ we should get a convex polyhedral set $P$ with normal fan $\Sigma$; that is $P$ should take the form
$$P=\{x\in \RR^{n+1}: \langle x, (v,1) \rangle\geq a_v \textup{ for all }v\in \T_0\}$$
for some constants $a_v\in \RR$. Note that for $\Sigma$ being the normal fan of $P$ the constants $a_v$ have to be chosen in a way that the faces of $P$ are dual to the cones of $\Sigma$. This condition translates as follows: given $T\in \T_k$, 
$$P\cap \{ \langle x, (v,1) \rangle=a_v \textup{ for all }v\in T\cap \T_0\}$$
is a face of $P$ of codimension $k+1$. To state the conditions in which this happens we introduce the following notions:

\begin{definition} \label{def:support}
 Let $\Sigma\subseteq \RR^{n+1}$ be a fan. A support function $\varphi$ on $\Sigma$ is a function $\varphi: |\Sigma|\to \RR$ such that $\varphi$ is linear in each cone of $\Sigma$. We say that $\varphi$ is convex if 
 $$\varphi(\lambda x+(1-\lambda)y)\leq \lambda\varphi(x)+(1-\lambda)\varphi(y) \, \, \, \forall x,y\in |\Sigma|.$$
 We say $\varphi$ is strictly convex if it is convex and the above inequality is strict for every $x, y\in |\Sigma|$ 
 such that there is no cone $\sigma\in \Sigma$ containing $x$ and $y$.
 \end{definition}
 
 Note that a support function is uniquely determined by choosing its values along rays $\rho\in \Sigma(1)$, where 
 $\rho=\RR_{\geq 0}\cdot (v,1)$ and $v\in \T_0$, or alternatively, by choosing the values $a_v=\varphi(v,1)$. 
 Alternatively, $\varphi$ is determined by its Cartier data $\{m_T\}_{T\in \T_n}$ where $m_T\in \RR^{n+1}$ is defined by 
 asking that $\varphi(x)=\langle m_T, x\rangle$ when $x\in \sigma$ and $\sigma\in \Sigma(n+1)$ is the cone over $T$. 
 Given a support function $\varphi$, we define its associated polyhedral set $P$ as
\begin{align*}P&=\{x\in \RR^{n+1}: \langle x, (v,1) \rangle\geq \varphi(v,1) \textup{ for all }v\in \T_0\}\\
&=\{x\in \RR^{n+1}: \langle x, \nu \rangle\geq \varphi(\nu) \textup{ for all }\nu \in |\Sigma|\}. \end{align*}

\begin{proposition}[Lemma 6.13, \cite{cox}]
Given a fan $\Sigma$ and a support function $\varphi$ on $\Sigma$, the polyhedral set above defined has normal fan $\Sigma$ if and only if $\varphi$ is strictly convex. Moreover, in this case, the vertices of $P$ are precisely given by the Cartier data $\{m_T\}_{T\in \T_n}$.
\label{normalfan}
\end{proposition}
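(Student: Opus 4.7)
The plan is to analyze how the piecewise-linear function $\varphi$ controls the face structure of the polyhedral set $P$, organized around the Cartier data. For each top-dimensional cone $\sigma_T\in\Sigma(n+1)$ (the cone over $T\in\T_n$) the restriction $\varphi|_{\sigma_T}$ is linear, so there is a unique $m_T\in(\RR^{n+1})^\ast$ with $\varphi(x)=\langle m_T,x\rangle$ for $x\in\sigma_T$. The compatibility of $\varphi$ across a shared face $\sigma_T\cap\sigma_{T'}$ translates into $m_T-m_{T'}$ vanishing on that face, and everything will be read off from these local data.

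For the ``if'' direction, I would assume $\varphi$ is strictly convex and first verify that every $m_T$ belongs to $P$. By linearity of $\varphi$ on each maximal cone it suffices to check $\langle m_T,\nu\rangle\geq\varphi(\nu)$ at each ray generator $\nu\in\Sigma(1)$. When $\nu\in\sigma_T$ this is equality; for $\nu\in\sigma_{T'}$ with $T'\neq T$, pick $x\in\inte(\sigma_T)$ and apply strict convexity on the segment from $x$ to $\nu$ together with the fact that $\langle m_T,\cdot\rangle$ is the linear extension of $\varphi|_{\sigma_T}$: this forces $\langle m_T,\nu\rangle>\varphi(\nu)$. Next, since the $n+1$ primitive generators of $\sigma_T$ are linearly independent, the $n+1$ active constraints at $m_T$ pin down a unique point, so $m_T$ is a vertex of $P$.

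I would then prove, for each $\tau\in\Sigma$ of dimension $k$, that the face
\[
F_\tau=P\cap\bigcap_{\nu\in\tau(1)}\{x:\langle x,\nu\rangle=\varphi(\nu)\}
\]
equals $\conv\{m_T:\sigma_T\supseteq\tau\}$ and has codimension exactly $k$. The previous paragraph gives the inclusion $\supseteq$ and the claim on $m_T$; strict convexity ensures that for $\nu\notin\tau$ the constraint $\langle x,\nu\rangle=\varphi(\nu)$ is inactive at points of $\inte F_\tau$, so no extra constraints collapse the dimension. The recession cone of $F_\tau$ is $\tau^\perp$, giving the right codimension. Since every face of $P$ has some collection of active facet constraints, whose normals span a cone of $\Sigma$, this gives a bijection between faces of $P$ and cones of $\Sigma$, which is exactly the statement that $\Sigma$ is the normal fan of $P$. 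The vertex claim then follows immediately: vertices correspond to top cones $\sigma_T$, and $F_{\sigma_T}=\{m_T\}$.

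For the converse, I would argue contrapositively. If $\varphi$ fails strict convexity then there exist adjacent maximal cones $\sigma_T,\sigma_{T'}$ sharing a facet $\tau$ such that $m_T-m_{T'}$ vanishes not only on $\tau$ but on some larger linear space, i.e.\ the two linear pieces either coincide ($m_T=m_{T'}$) or form a concave bend across $\tau$. In the first case the two cones $\sigma_T,\sigma_{T'}$ of $\Sigma$ produce the same vertex of $P$, so the normal fan of $P$ is coarser than $\Sigma$. In the second case $m_T\notin P$ because $\langle m_T,\nu\rangle<\varphi(\nu)$ for some ray $\nu$ of $\sigma_{T'}$, and again the normal fan cannot be $\Sigma$. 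The main obstacle is the middle step: carefully turning the pointwise strict-convexity inequality into the statement that the face $F_\tau$ has codimension exactly $\dim\tau$, neither larger (which would happen if $\varphi$ were not convex, making some $m_T$ fall out of $P$) nor smaller (which would happen if two adjacent $m_T,m_{T'}$ coincided). Once this dictionary between strict convexity and the correct codimensions of the $F_\tau$ is established, both directions and the vertex description fall out uniformly.
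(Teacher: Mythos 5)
The paper does not actually prove this proposition: it is quoted verbatim from Cox--Little--Schenck (their Lemma 6.1.13 together with the surrounding discussion of normal fans), so there is no internal argument to compare yours against. Judged on its own, your outline follows the standard route --- identify the Cartier data $m_T$ as the candidate vertices, show strict convexity forces $\langle m_T,\nu\rangle$ to beat $\varphi(\nu)$ strictly off $\sigma_T$, and match cones $\tau\in\Sigma$ with faces $F_\tau$ of the correct codimension --- and the ``if'' direction through the vertices is essentially sound. One caveat you should make explicit: with the paper's literal conventions (convexity defined by $\leq$, but $P$ cut out by $\langle x,\nu\rangle\geq\varphi(\nu)$) the linear pieces of a convex $\varphi$ lie \emph{above} the extensions $\langle m_T,\cdot\rangle$, so $m_T$ lands on the wrong side of $P$; your argument silently adopts the Cox--Little--Schenck sign convention (where ``convex'' means $\varphi\leq\langle m_T,\cdot\rangle$ everywhere), which is what makes the statement true. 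That is an inconsistency in the paper's definitions rather than an error of yours, but a complete proof has to fix a convention and stick to it.

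The genuine gap is in your description of the faces. Here $P$ is unbounded --- its recession cone is $|\Sigma|^\vee$, the full-dimensional moment cone $C$ --- so $F_\tau$ is \emph{not} $\conv\{m_T:\sigma_T\supseteq\tau\}$ (already for $\tau=\{0\}$ that would say $P$ itself is a bounded polytope), and the recession cone of $F_\tau$ is $|\Sigma|^\vee\cap\tau^\perp$, not $\tau^\perp$; if it were the linear space $\tau^\perp$, then $P$ would contain lines and could not have vertices at all. The correct statement is $F_\tau=\conv\{m_T:\sigma_T\supseteq\tau\}+\bigl(|\Sigma|^\vee\cap\tau^\perp\bigr)$, and since that intersection can have dimension strictly less than $n+1-\dim\tau$, it does not by itself ``give the right codimension.'' To get $\codim F_\tau=\dim\tau$ you should instead exhibit a point $\hat x\in F_\tau$ (e.g.\ the barycenter of the relevant $m_T$, or $m_T$ itself when $\tau$ is maximal in only one cone) at which every constraint $\langle x,\nu\rangle\geq\varphi(\nu)$ with $\nu\notin\tau(1)$ is strict, so that a full neighbourhood of $\hat x$ inside the affine space $m_T+\tau^\perp$ lies in $F_\tau$; strict convexity is exactly what guarantees such a point exists. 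Finally, the converse as you sketch it also leans on the unproved (though standard) fact that for a support function on a fan with convex full-dimensional support, failure of convexity, respectively of strictness, is always witnessed across a single wall between adjacent maximal cones; that lemma should be stated and proved, since it is what legitimises your two-case dichotomy.
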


Now from $P$ we can construct a toric symplectic manifold having $P$ as the image of its moment map. This can be done using the symplectic cutting construction as presented in \cite{okitsu2013cutting} (and based on \cite{lerman1995symplectic}). Moreover this manifold has the structure of an algebraic variety with fan $\Sigma$. We state here this result and we refer to the literature for its proof, mentioning some adjustments.

\begin{proposition} \label{prop:symplectic}
Let $\Sigma\subseteq \RR^{n+1}$ be a simplicial, full dimensional fan admitting a strictly convex support function $\varphi$. Then $X_\Sigma$ admits a symplectic structure $\omega_\varphi$ making it a toric symplectic orbifold with moment map image 
$$P=\{x\in \RR^{n+1}: \langle x, \nu \rangle\geq \varphi(\nu) \textup{ for all }\nu \in |\Sigma|\}.$$
\end{proposition}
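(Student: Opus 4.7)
The plan is to build $(X_\Sigma,\omega_\varphi)$ as an orbifold symplectic reduction from linear data and then identify it with the algebraic toric variety $X_\Sigma$, following the Delzant--Lerman--Tolman construction adapted to non-compact simplicial fans as in \cite{lerman1995symplectic, okitsu2013cutting}.

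Concretely, I would first enumerate the rays $\rho_1,\ldots,\rho_d\in\Sigma(1)$ with primitive generators $\nu_1,\ldots,\nu_d\in\ZZ^{n+1}$ and consider the map $\beta\colon\ZZ^d\to\ZZ^{n+1}$, $e_j\mapsto\nu_j$, which is surjective after tensoring with $\QQ$ because $\Sigma$ is full-dimensional. Set $K:=\ker(\beta\colon\TT^d\to\TT^{n+1})$, a compact abelian Lie group of dimension $d-n-1$ (possibly with finite components). Equip $\CC^d$ with its standard K\"ahler form and $\TT^d$-moment map $\mu_{\TT^d}(z)=\tfrac{1}{2}(|z_1|^2,\ldots,|z_d|^2)$, and form the restricted $K$-moment map $\mu_K=\iota^*\mu_{\TT^d}$, where $\iota^*\colon(\RR^d)^*\to\mathfrak{k}^*$ is the dual of the inclusion.

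Next, I would take as reduction level the functional $\lambda=(\varphi(\nu_1),\ldots,\varphi(\nu_d))\in(\RR^d)^*$ and set $X:=\mu_K^{-1}(\iota^*\lambda)/K$. Two points need to be checked in the standard pattern. First, the simplicial hypothesis ensures that at any point of the level set the vanishing coordinates $\{j:z_j=0\}$ are indexed by rays of a common cone of $\Sigma$, so the corresponding normals $\nu_j$ are $\QQ$-linearly independent; consequently the stabilizer in $K$ is finite, the $K$-action on $\mu_K^{-1}(\iota^*\lambda)$ is locally free, and $X$ carries an induced symplectic orbifold structure $\omega_\varphi$ with a residual effective Hamiltonian $\TT^{n+1}=\TT^d/K$-action with moment map $\bar\mu\colon X\to\RR^{n+1}$. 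Second, unpacking the definitions shows that $\bar\mu([z])$ lies in the affine hyperplane $\langle x,\nu_j\rangle=\varphi(\nu_j)$ whenever $z_j=0$ and in the half-space $\langle x,\nu_j\rangle\geq\varphi(\nu_j)$ otherwise, so $\bar\mu(X)\subseteq P$; the reverse inclusion follows from Proposition~\ref{normalfan}, since the vertices of $P$ are the Cartier data $\{m_T\}$, each realized as the image of the orbifold fixed point corresponding to the coordinate subspace indexed by $\sigma_T$.

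Finally, to identify $X$ with $X_\Sigma$ as a complex orbifold I would invoke the Kempf--Ness correspondence: one must show $\mu_K^{-1}(\iota^*\lambda)\subseteq\CC^d\setminus Z_\Sigma$, where $Z_\Sigma$ is the irrelevant locus in Cox's homogeneous coordinate construction of $X_\Sigma$, after which the symplectic quotient is a real slice of the GIT quotient $(\CC^d\setminus Z_\Sigma)/\!\!/K_\CC=X_\Sigma$ (cf.~\cite{cox}). The main obstacle is precisely this inclusion: in the non-compact setting it cannot be extracted from a compactness argument but must be derived from strict convexity of $\varphi$, via an inequality showing that if the set of vanishing coordinates of $z$ is not contained in any cone of $\Sigma$ then $\mu_{\TT^d}(z)$ cannot lie in $(\iota^*)^{-1}(\iota^*\lambda)$. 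Once this is done, the rest amounts to transporting the reduced symplectic form to $X_\Sigma$ via the resulting $\TT^{n+1}$-equivariant orbifold diffeomorphism.
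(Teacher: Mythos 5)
Your proof is correct in outline but takes a genuinely different route from the paper. You run the Delzant--Lerman--Tolman construction: reduce $\CC^d$ by $K=\ker(\TT^d\to\TT^{n+1})$ at the level $\iota^*\lambda$ determined by $\varphi$, and then identify the quotient with $X_\Sigma$ via Kempf--Ness and Cox's homogeneous coordinate construction. The paper instead builds the symplectic orbifold by successive symplectic cuts starting from $T^*\TT^{n+1}$ (following Okitsu, based on Lerman's symplectic cutting), noting that unimodularity is only needed there for smoothness of the end result, and then invokes Lemma 9.2 of Lerman--Tolman to give the resulting K\"ahler orbifold the structure of the toric variety $X_\Sigma$, using Proposition~\ref{normalfan} to know that the normal fan of $P$ is $\Sigma$. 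Two remarks on your version. First, the ``main obstacle'' you isolate is in fact already settled by Proposition~\ref{normalfan}: if $z\in\mu_K^{-1}(\iota^*\lambda)$ then $\mu_{\TT^d}(z)-\lambda\in\im(\beta^t)$, so there is $x\in\RR^{n+1}$ with $\tfrac12|z_j|^2=\langle x,\nu_j\rangle-\varphi(\nu_j)$ for all $j$; hence $x\in P$, the vanishing set $\{j:z_j=0\}$ indexes exactly the facets containing the face of $P$ through $x$, and the normal cone of that face is a cone of $\Sigma$ because $\Sigma$ is the normal fan of $P$. This gives the inclusion into $\CC^d\setminus Z_\Sigma$, local freeness of the $K$-action, and (choosing $|z_j|^2=2(\langle x,\nu_j\rangle-\varphi(\nu_j))$ for any $x\in P$) surjectivity of the reduced moment map onto $P$, without the detour through vertices and Cartier data. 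Second, the step that genuinely deserves care and that you only gesture at is the Kempf--Ness identification in this non-compact, merely semiprojective setting (every $K_{\CC}$-orbit in $\CC^d\setminus Z_\Sigma$ must meet the level set in a single $K$-orbit); the paper sidesteps GIT entirely by constructing the complex structure directly on the reduced space. What your approach buys is a single global reduction with explicit coordinates; what the paper's buys is that the orbifold and complex-analytic bookkeeping is delegated to existing references.
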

\begin{proof}
Since $\Sigma$ is simplicial and full dimensional, $P$ is a simple and strongly convex polyhedral set. In \cite{okitsu2013cutting} a toric symplectic orbifold with moment map image $P$ is constructed by starting with $T^\ast \mathbb{T}^{n+1}$ and performing successive symplectic cuts (introduced in \cite{lerman1995symplectic}). Note that although the construction is stated only for unimodular convex polyhedral set it works when we drop this condition as long as it is still simple. Indeed, in the intermediate steps of the cutting process we get orbifolds, and the unimodularity condition is only used in Remark 2.8 to ensure that the final manifold is smooth.

Theorem 5.1 in \cite{okitsu2013cutting} gives also a compatible Kähler structure which is $\mathbb T^{n+1}$-invariant. 
Now Lemma 9.2 in \cite{lerman1997hamiltonian} shows that this Kähler orbifold can be given the structure of a toric variety 
isomorphic to $X_\Sigma$, since by Proposition \ref{normalfan} the normal fan of $P$ is $\Sigma$. Note that the result there 
is only stated in the case that the orbifold is compact (equivalently, $\Sigma$ is complete, or $P$ is bounded) but the proof 
works verbatim to the non-compact case. We remark that in this proof it is implicitly shown that given $\sigma\in \Sigma(n+1)$, 
with corresponding vertex $m_\sigma\in P$, we have that
$$\mu^{-1}\left(\bigcup_{m_\sigma\in F} \textup{int }F\right),$$
has the structure of an affine toric variety with cone $\sigma$, where the union runs over faces of $P$ containing $m_\sigma$. 
\qedhere
\end{proof}

Note that although the underlying algebraic variety structure $X_\Sigma$ is always the same, the symplectic form depends a 
lot on $\varphi$. The polyhedral set $P$ always has the same combinatorial structure, but by changing $\varphi$ we change 
for instance the (lattice) length of its edges which are the symplectic areas of the corresponding spheres.

\begin{remark}
When $\Sigma$ is complete the results of chapter 6 of \cite{cox} show that from a strictly convex support function $\varphi$ 
we get a (toric-invariant) ample divisor and thus an embedding $X_\Sigma \hookrightarrow \CC P^{s}$, for some $s\in\NN$. 
In this case the symplectic form $\omega_\varphi$ can be obtained by pulling-back the Fubini-Study form in $\CC P^s$. 
\end{remark}

This shows that given a toric $\QQ$-Gorenstein contact manifold $M$ with toric diagram $D$, a triangulation $\mathcal T$ of 
$D$ and a strictly convex support function defined on the fan $\Sigma$ over $D$ one gets a crepant symplectic filling 
$(X_\Sigma, \omega_\varphi)$ of $M$, which is smooth if and only if $\T$ is unimodular -- we call a filling obtained in this way a 
crepant toric filling. We now prove that, in this context, strictly convex support functions always exist.

\begin{proposition} \label{prop:filling}
Let $D$ be a toric diagram, $\T$ a triangulation and $\Sigma$ the fan over $\T$. Then $\Sigma$ admits a strictly convex 
support function. In particular every compact $\QQ$-Gorenstein contact toric manifold admits a not-necessarily smooth 
$\QQ$-crepant symplectic filling.
\end{proposition}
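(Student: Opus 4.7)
Plan: I would construct a strictly convex support function $\varphi$ on $\Sigma$ by specifying its values on the rays and extending linearly over each maximal cone. Since every ray of $\Sigma$ is of the form $\RR_{\geq 0}\cdot(v,1)$ for some vertex $v \in \T_0$, such a $\varphi$ is determined by a choice of real numbers $h_v := \varphi(v,1)$. Because $\T$ is a simplicial subdivision, linear extension on each $(n+1)$-dimensional cone is unambiguous and continuous, so any such choice produces a well-defined support function $\varphi:|\Sigma|\to\RR$.

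The crux is to arrange that $\varphi$ is strictly convex at every interior wall of $\Sigma$. I would set $h_v := F(v)$ for a strictly convex smooth function $F:\RR^n \to \RR$ (for instance $F(x) = \|x\|^2$, possibly perturbed to be generic with respect to $\T$). Strict convexity of $\varphi$ across the wall corresponding to two adjacent top-dimensional simplices $\theta_1 = \conv(u, w_1, \dots, w_n)$ and $\theta_2 = \conv(v^\ast, w_1, \dots, w_n)$ of $\T$ amounts to a single strict linear inequality, namely
$$
h_{v^\ast} < \widetilde{A}_1(v^\ast),
$$
where $\widetilde{A}_1$ denotes the affine extrapolation of $h$ from $\{u, w_1, \dots, w_n\}$ to $v^\ast$. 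Geometrically this is the standard \emph{lower envelope} characterization of a coherent triangulation: for a sufficiently generic strictly convex $F$, the projection of the lower convex hull of the lifted points $\{(v, F(v)) : v \in \T_0\}$ in $\RR^{n+1}$ matches $\T$ exactly, and this inequality is equivalent to the projection giving $\T$ rather than a coarsening. The resulting PL function then realises $\T$ as its complex of linearity and is strictly convex by construction.

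For the ``in particular'' claim, apply Proposition \ref{prop:symplectic} to the strictly convex support function $\varphi$ just constructed to obtain a toric symplectic orbifold $(X_\Sigma, \omega_\varphi)$. Since $\T$ is rational (so $m\T_0 \subseteq \ZZ^n$), every primitive generator of a ray of $\Sigma$ lies in the hyperplane $\{x_{n+1} = m\}$; by Proposition \ref{prop:c_1} the resulting filling of $(M_D,\xi_D)$ is then $\QQ$-crepant.

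The main obstacle is the strict-convexity verification across every interior wall. The delicate point is that the expression $v^\ast = \lambda_0 u + \sum_i \lambda_i w_i$ as an affine combination forces $\lambda_0 < 0$ (since $v^\ast$ and $u$ lie on opposite sides of the wall), so the required inequality does \emph{not} follow from a direct application of Jensen's inequality to $F$. The argument must instead exploit that $\T$ triangulates the convex polytope $D$, so that the wall genuinely separates $u$ from $v^\ast$ inside $D$, and pick $F$ adapted to $\T$ so that the lower envelope of its lift realizes $\T$; the existence of such an $F$ is the technical heart of the statement.
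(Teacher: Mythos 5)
Your argument reduces the proposition to the assertion that the \emph{given} triangulation $\T$ is regular (coherent): a choice of heights $h_v$ on $\T_0$ whose piecewise--linear extension over $\T$ is convex and strictly convex across every interior wall is, essentially by definition, a lift whose lower envelope induces exactly $\T$, and conversely a strictly convex support function on $\Sigma$ restricts on the slice $D\times\{1\}$ to precisely such a lift. You correctly flag this as ``the technical heart of the statement'', but you do not prove it, and it cannot be proved in this generality: not every rational triangulation of a polytope is regular. Already for $n=2$ there are rational configurations consisting of the three vertices of a triangle $D$ together with three interior points whose ``twisted'' triangulation is the standard example of a non-regular triangulation; for such $\T$ the fan $\Sigma$ over $\T$ admits \emph{no} strictly convex support function, so no choice of $F$ can work. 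The auxiliary claim that a generic strictly convex $F$ (e.g.\ a perturbation of $\|x\|^2$) does the job is also false even when $\T$ is regular: the lower envelope of $\{(v,F(v)):v\in\T_0\}$ produces \emph{some} regular triangulation of $\T_0$ (the Delaunay triangulation when $F(x)=\|x\|^2$), which in general is not the prescribed $\T$. (Minor point: with the lower-envelope convention the strictness condition at a wall is $h_{v^\ast}>\widetilde A_1(v^\ast)$, not $<$.)

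What genuinely survives is the ``in particular'' clause, which is all the filling application needs: one is free to \emph{choose} the triangulation, and regular rational triangulations of $D$ always exist (take the lower faces of the convex hull of a generic rational lift of a rational point set containing the vertices of $mD$; the resulting $\T$ is rational and comes equipped with its convex lift by construction). Your concluding step --- feeding the strictly convex $\varphi$ into Proposition~\ref{prop:symplectic} and using that all ray generators lie at height $m$ to get $\QQ$-crepancy --- then goes through. For comparison, the paper argues by summing $|\langle x,m_\tau\rangle|$ over the walls $\tau\in\Sigma(n)$ rather than by lifting; but that route also requires the linear span of each wall to avoid the interiors of the maximal cones, which already fails for the star triangulation of a triangle at an interior point, so it too implicitly constrains $\T$. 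In either approach the honest statement needs $\T$ to be regular, and you should either add that hypothesis or construct the triangulation together with its convex lift as above.
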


\begin{proof}
We note that the proof of Theorem 6.1.18 in \cite{cox} adapts to this case (although the result proved there does not apply directly). 
For each $\tau\in \Sigma(n)$ let $m_\tau\in \RR^{n+1}$ be any vector such that $\tau=\{x\in \RR^{n+1}: \langle x, m_\tau \rangle=0\}$ 
(this is unique up to scaling) and define a support function $\varphi: |\Sigma|\to \RR$ by
$$\varphi(x)=\sum_{\tau\in \Sigma(n)}|\langle x, m_\tau \rangle|.$$
Convexity follows from triangle inequality and we get strict convexity by noticing that if $x, y$ are in two different cones 
$\sigma, \sigma'\in \Sigma(n+1)$ then there is some $\tau$ for which the hyperplane $\textup{span}(\tau)$ separates $x$ and $y$, 
hence $\langle x, m_\tau\rangle$ and $\langle y, m_\tau\rangle$ have different signs and we cannot have equality in the triangle 
inequality.

It remains to show that $\varphi$ is piecewise linear in every cone $\sigma\in \Sigma(n+1)$. By the construction of $\Sigma$ as 
a fan over $\T$, the hyperplane $\textup{span}(\tau)$ does not intersect the interior of $\sigma$ (this is why we do not have to take 
a refinement of $\Sigma$ as in \cite{cox}). Therefore $\langle x, m_\tau\rangle$ does not change sign for $x\in \sigma$ and hence 
$\varphi_{|\sigma}$ is linear. \qedhere
\end{proof}

\begin{example}
Consider the toric diagram $D=\textup{conv}((0,0), (1,0), (0,1), (2,2))$. This gives a Gorenstein contact manifold diffeomorphic to $S^2\times S^3$ (see \cite[Section 6.1]{abreu2012contact}). We have two combinatorially distinct unimodular triangulations of $D$ that give two different smooth crepant toric fillings:
\begin{center}
\includegraphics[scale=0.4]{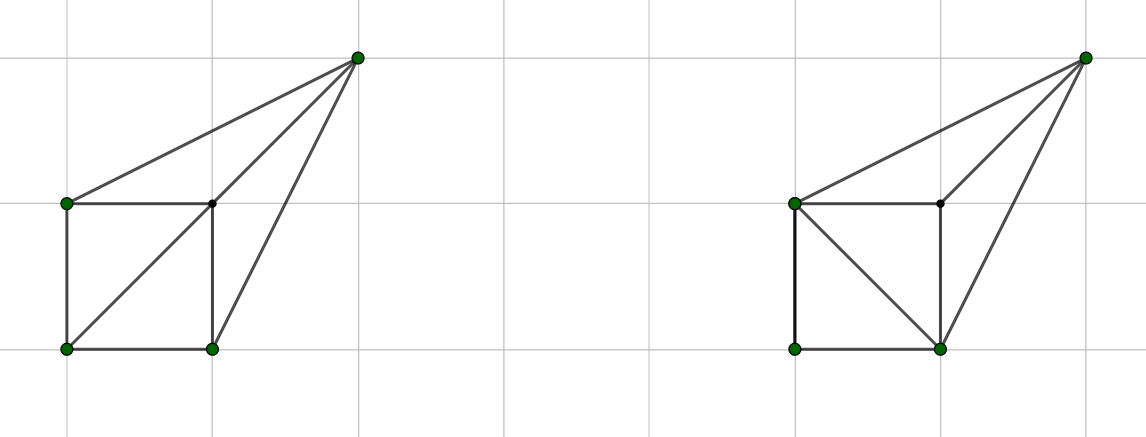}
\end{center}
The first resolution is the total space of the canonical bundle $K_{\mathbb F_1}$ where $\mathbb F_1$ is the Hirzebruch surface $\mathbb P(\Oo_{\mathbb P^1}\oplus \Oo_{\mathbb P^1}(1))$. The two resolutions are related by an Atiyah flop.
\end{example}

\subsection{Orbifold cohomology of $X_\Sigma$}

A result of Stapledon in \cite{stapledon} relates the Ehrhart polynomial of $D$ with the orbifold cohomology of the varieties $X_\Sigma$ described above. 

The orbifold cohomology is an orbifold invariant introduced by Chen and Ruan in \cite{chen2004new}. In general it assigns a $\QQ$-graded ring $H^\ast_\textup{orb}(X)$ to an orbifold $X$. This grading is actually a $\frac{1}{m}\ZZ$-grading when the orbifold is $\QQ$-Gorenstein of order $m$. If the orbifold is smooth, then the orbifold cohomology ring is isomorphic to the singular cohomology.

\begin{theorem}
Let $D$ be a rational toric diagram of order $m$ and $\T$ a rational triangulation of $D$ (i.e., $m\T_0\subseteq \ZZ^n$).  Let $\Sigma$ be the fan over $\T$ and $X_\Sigma$ the toric variety with fan $\Sigma$. Then 
$$\dim H^{2j}_{\textup{orb}}(X_\Sigma; \QQ)=\delta_{mj}$$
for $j\in \frac{1}{m}\ZZ$, and the remaining orbifold cohomology groups are trivial. 
\label{stapledon}
\end{theorem}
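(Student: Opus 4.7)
The plan is to compute the Chen-Ruan cohomology of $X_\Sigma$ through the twisted-sector decomposition of its inertia stack and match it to the $\delta$-polynomial by computing the Ehrhart series of $D$ in two ways, then comparing generating functions.

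First, I would invoke the standard twisted sector description for a simplicial toric orbifold (see Appendix~\ref{appendix}):
\[
H^*_{\textup{orb}}(X_\Sigma;\QQ) = \bigoplus_{\sigma \in \Sigma}\bigoplus_{v \in \bx(\sigma)} H^{*-2\operatorname{age}(v)}(V(\sigma);\QQ),
\]
where $\bx(\sigma)$ is the set of box elements attached to $\sigma$ (lattice points in $\operatorname{relint}(\sigma)$ whose unique expansion $v = \sum t_i \rho_i$ in terms of the primitive ray generators has $0 < t_i \leq 1$), $\operatorname{age}(v) = \sum t_i$, and $V(\sigma)$ is the closure of the torus orbit associated to $\sigma$. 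By the $\QQ$-Gorenstein condition (Corollary~\ref{cor:c_1}) every primitive ray generator lies on the hyperplane $\{x_{n+1} = m\}$, so $\operatorname{age}(v) = v_{n+1}/m \in \frac{1}{m}\ZZ_{>0}$. This immediately gives the vanishing $H^q_{\textup{orb}}(X_\Sigma;\QQ) = 0$ for $q \notin \frac{2}{m}\ZZ_{\geq 0}$ and reduces the theorem to the generating-function identity
\[
P_{\textup{orb}}(z) := \sum_{j} \dim H^{2j}_{\textup{orb}}(X_\Sigma;\QQ)\,z^{mj} = \sum_{\sigma \in \Sigma}\sum_{v \in \bx(\sigma)} z^{v_{n+1}}\,P_{V(\sigma)}(z^m) \stackrel{?}{=} \sum_{k=0}^{m(n+1)-1}\delta_k z^k.
\]

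Second, I would compute $\Ehr_D(z)$ directly from $\T$. Writing a lattice point in $\sigma_\T := \operatorname{cone}(D \times \{1\})$ as lying in the relative interior of a unique cone $\sigma \in \Sigma$ and using the standard half-open tiling $\operatorname{relint}(\sigma) \cap \ZZ^{n+1} = \bigsqcup_{a \in \ZZ_{\geq 0}^{\dim\sigma}} \bigl(\bx(\sigma) + \textstyle\sum a_i \rho_i\bigr)$ together with $z^{(\rho_i)_{n+1}} = z^m$, one obtains
\[
\Ehr_D(z) = \frac{1}{(1-z^m)^{n+1}} \sum_{\sigma \in \Sigma}\sum_{v \in \bx(\sigma)} z^{v_{n+1}}(1-z^m)^{n+1-\dim\sigma}.
\]
Comparing with \eqref{ehrseries} rewrites the $\delta$-polynomial as the same box double-sum with factors $(1-z^m)^{n+1-\dim\sigma}$ in place of $P_{V(\sigma)}(z^m)$. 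The theorem therefore reduces to proving the equality of these two double sums.

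To close the argument I would use the classical fact that $P_{V(\sigma)}(z^m)$ is the $h$-polynomial of the star of $\sigma$ in $\Sigma$, or equivalently of the link of the corresponding face in $\T$. Applying inclusion-exclusion over cones $\tau \supseteq \sigma$ rewrites each factor $(1-z^m)^{n+1-\dim\sigma}$ as an alternating sum of these $h$-polynomials, and the telescoping across all cones regroups the Ehrhart box-sum into the orbifold Poincar\'e expression; this is the combinatorial content of Stapledon's Theorem 4.3 in~\cite{stapledon} (with Batyrev-Dais~\cite{batyrev1996strong} treating the smooth $m=1$ case). The hardest step is precisely this last reorganization: the local analytic factor $(1-z^m)^{n+1-\dim\sigma}$ is not individually equal to $P_{V(\sigma)}(z^m)$, and their equality at the level of the box sum hinges on Stanley's non-negativity of the $\delta$-vector together with the Dehn-Sommerville symmetry of the $h$-vectors of links of simplices in $\T$, both of which are essential to make the inclusion-exclusion telescope without cancellations. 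Once this identification is in place, extracting the coefficient of $z^{mj}$ on both sides yields $\dim H^{2j}_{\textup{orb}}(X_\Sigma;\QQ) = \delta_{mj}$.
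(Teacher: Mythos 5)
Your overall architecture is the same as the paper's proof in Appendix~\ref{appendixB}: write both the orbifold Poincar\'e polynomial and the Ehrhart series of $D$ as box sums over the cones of $\Sigma$ and compare coefficients. But the closing step, as you describe it, would not go through, for two concrete reasons. First, your box convention is inconsistent between the two halves of the argument. For the twisted-sector decomposition the correct index set is the set of lattice points $\sum_i t_i\rho_i$ with $0<t_i<1$ (strict on both sides, as in Appendix~\ref{appendix}); with your convention $0<t_i\le 1$ the point $\sum_i\rho_i$ is a box element of every cone, producing spurious twisted sectors already when $X_\Sigma$ is smooth. On the Ehrhart side, by contrast, your half-open tiling of $\operatorname{relint}(\sigma)\cap\ZZ^{n+1}$ genuinely requires the $\le$ convention. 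So the two double sums you propose to compare are indexed by \emph{different} sets, and that discrepancy is exactly where the content of the comparison lives.

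Second, the comparison needs neither Stanley non-negativity nor Dehn--Sommerville symmetry, and reaching for them suggests the step was not actually carried out. The correct argument is an elementary regrouping: every $v\in\bx^{\le}(\sigma)$ decomposes uniquely as $v=\mu+\sum_{\rho_j\in R(\sigma)\setminus R(\eta)}\rho_j$ with $\eta\subseteq\sigma$ a face and $\mu\in\bx^{<}(\eta)$ (collect into $\eta$ the rays where $t_i<1$); since every ray generator has last coordinate $m$, such a $v$ contributes $z^{\mu_{n+1}}z^{m(\dim\sigma-\dim\eta)}$. Swapping the order of summation, the inner sum over $\sigma\supseteq\eta$ becomes $\sum_{\sigma\supseteq\eta}z^{m(\dim\sigma-\dim\eta)}(1-z^m)^{n+1-\dim\sigma}=h_\eta(z^m)$, which is precisely the definition of the $h$-polynomial of $\eta$ and, by Stapledon's Lemma 4.1 (quoted as Proposition~\ref{orbicohboxsum}), the Poincar\'e polynomial of the twisted sector $V(\eta)$. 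This is exactly how the paper proceeds: it parametrizes the lattice points of the cone over $D$ from the outset as $\nu=\mu+\sum_{\nu_j\in R(\theta)\setminus R(\eta)}\nu_j+\sum_{\nu_j\in R(\theta)}m_j\nu_j$ with $\mu\in\bx(\eta)$, so the $h$-polynomial weights appear automatically and no inclusion-exclusion or positivity input is needed. With that regrouping supplied (and the box convention fixed on the cohomology side), your proof closes and coincides with the paper's.
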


This fact was known when $X_\Sigma$ is smooth at least since \cite{batyrev1996strong}. The Gorenstein case ($m=1$) was proven in \cite[Theorem 4.6]{stapledon}. We give an adaptation of Stapledon's argument for the rational case in the appendix, see Theorem \ref{stapledongeneral}.

Combining this result with Theorem \ref{main} we get the contact Betti numbers of $D$ from the orbifold cohomology of $X_\Sigma$.

\begin{corollary}
Let $D\subseteq \RR^n$ be a rational toric diagram and $(M, \xi)$ its associated $\QQ$-Gorenstein toric contact manifold. Let $\T$ be a rational triangulation of $D$, $\Sigma$ the fan over $\T$ and $X_\Sigma$ the toric variety with fan $\Sigma$. Then we have for every $j\in \QQ$
$$cb_{2j}(D)=\sum_{k\geq 0} \dim H_{\textup{orb}}^{2n-2j+2k}(X_\Sigma; \QQ).$$
\label{isostapledon}
\end{corollary}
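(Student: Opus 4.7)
The plan is to combine the two main ingredients already established in the paper. The first is Theorem~\ref{main}, which identifies successive differences of the contact Betti numbers with entries of the $\delta$-vector of $D$; the second is Theorem~\ref{stapledon}, which identifies those same entries with the dimensions of the orbifold cohomology of $X_\Sigma$. The result should then drop out by a telescoping sum and term-by-term substitution.

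First I would fix $j\in\QQ$ and telescope the identity
\[
cb_{2j}(D) - cb_{2(j-1)}(D) = \delta_{m(n-j)}(D)
\]
from Theorem~\ref{main}. Iterating downward yields, for any $K\geq 0$,
\[
cb_{2j}(D) = cb_{2(j-K)}(D) + \sum_{k=0}^{K-1} \delta_{m(n-j+k)}(D).
\]
The boundary term vanishes once $K$ is large enough. Indeed, there are only finitely many simple closed toric Reeb orbits $\gamma_1,\ldots,\gamma_d$ (one per facet of $D$), and formula~\eqref{eq:CZindexQ} shows that each $\gamma_\ell$ has positive mean Conley-Zehnder index $2/b^\ell>0$ (since $b^\ell>0$ by convention), so the degrees of the iterates $\gamma_\ell^N$ tend to $+\infty$ with $N$. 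Consequently $cb_{2j'}(D)=0$ for $j'$ sufficiently negative, and sending $K\to\infty$ gives
\[
cb_{2j}(D) = \sum_{k \geq 0} \delta_{m(n-j+k)}(D).
\]

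Next I would substitute Stapledon's identity from Theorem~\ref{stapledon}, namely $\delta_{mj'}(D) = \dim H^{2j'}_{\textup{orb}}(X_\Sigma;\QQ)$ for $j'\in\frac{1}{m}\NN_0$, with both sides understood to vanish otherwise. Applying this termwise with $j' = n - j + k$ turns the previous display into
\[
cb_{2j}(D) = \sum_{k\geq 0} \dim H^{2n-2j+2k}_{\textup{orb}}(X_\Sigma;\QQ),
\]
which is exactly the claim. There is no real obstacle: once Theorems~\ref{main} and~\ref{stapledon} are granted, the corollary is purely a telescoping plus re-indexing argument, and the only subtle point is justifying the vanishing of $cb_{2j'}(D)$ for very negative $j'$, which is needed so that the telescoping converges.
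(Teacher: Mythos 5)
Your proposal is correct and follows essentially the same route as the paper: the paper's proof also observes that, by Theorem~\ref{main} (telescoped, using the vanishing of the contact Betti numbers in sufficiently negative degrees) and Theorem~\ref{stapledon}, both sides equal $\sum_{k\geq 0}\delta_{m(n-j+k)}(D)$. Your explicit justification of the boundary term vanishing via the positive mean index $2/b^\ell$ is a correct filling-in of a step the paper leaves implicit.
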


\begin{proof}
By Theorems \ref{main} (for the left hand side) and \ref{stapledon} (for the right hand side) both sides are equal to $\sum_{0\leq k<j+1}\delta_{m(n-j+k)}$.  \qedhere
\end{proof}

\subsection{Smooth case of Corollary \ref{isostapledon}} \label{ss:isostapledon}

We give here a direct proof of Corollary \ref{isostapledon} when $m=1$ and $X_\Sigma$ is smooth (equivalently, $D$ is an integral toric diagram and $\T$ is a unimodular triangulation) that does not go through the Ehrhart polynomial. Recall that if $X_\Sigma$ is smooth then the orbifold cohomology is just the singular cohomology. 

The proof uses symplectic (co)homology of the contact type boundary $X_\Sigma$\footnote{To be precise, $X_\Sigma$ is the symplectic completion of a contact type boundary symplectic manifold.} and its positive/negative and $S^1$-equivariant versions. We refer to \cite{viterbo1999functors,oancea2013gysin,oancea2014linearized} for details on the constructions and for the results we will need. We will mostly follow the grading conventions of \cite{oancea2014linearized} with two differences: the dimension of our symplectic completion $X_\Sigma$ is $2n+2$ instead of $2n$, and we use the symplectic field theory shift of $n-2$ in the index of contact homology.

The key fact that will be necessary in the proof is the vanishing of symplectic (co)homology:

\begin{lemma}
Let $D$ be a toric diagram, $\T$ a unimodular triangulation and $\Sigma$ the fan over $\T$. Let $X_\Sigma$ be the smooth toric symplectic manifold described in \ref{Xsigma}. Then $SH_\ast(X_\Sigma)=0$ and $SH_\ast^{S^1}(X_\Sigma)=0$.
\end{lemma}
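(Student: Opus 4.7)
The plan is to establish the vanishing of $SH_*(X_\Sigma)$ first, and then to deduce the $S^1$-equivariant statement from it by a Gysin-type argument.

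\textbf{Vanishing of $SH_*$.} The manifold $X_\Sigma$, equipped with $\omega_\varphi$, is a Liouville filling of $(M_D,\xi_D)$, and crepancy of the resolution (which here follows from $m=1$ together with unimodularity of $\T$) forces $c_1(X_\Sigma)=0$, so the grading on $SH_*$ is well defined. Picking a rational Reeb direction $\nu \in \textup{int}(D)$, the corresponding circle subgroup $S^1_\nu \subset \TT^{n+1}$ acts Hamiltonianly on all of $X_\Sigma$, and its moment map $H_\nu$ is proper, bounded below, and linear in the Liouville coordinate on the cylindrical end (it is essentially the Hamiltonian whose time-one flow at infinity is the Reeb flow of $\alpha_\nu$). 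This places $X_\Sigma$ in the standard setup of a Liouville manifold equipped with a \emph{positive} Hamiltonian $S^1$-action; the classical mechanism is that the circle symmetry yields, at the chain level, a null-homotopy of the identity on the Floer complex of $H_\nu$, so that the continuation colimit computing $SH_*$ is forced to vanish. Alternatively, one can directly invoke the vanishing theorem for symplectic homology of crepant resolutions of isolated Gorenstein (toric) singularities due to McLean--Ritter~\cite{mclean2018mckay}.

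\textbf{Vanishing of $SH^{S^1}_*$.} Applying the Bourgeois--Oancea Gysin-type exact triangle
\[
\cdots \to SH_*(X_\Sigma) \to SH^{S^1}_*(X_\Sigma) \xrightarrow{\,\cdot u\,} SH^{S^1}_{*-2}(X_\Sigma) \to SH_{*-1}(X_\Sigma) \to \cdots ,
\]
the vanishing of $SH_*$ implies that multiplication by the equivariant parameter $u$ is an isomorphism on $SH^{S^1}_*$. Combined with the boundedness-below of Conley--Zehnder indices of closed $R_\nu$-orbits (noted in the introduction and proved in~\cite{abreu2012contact,abreu2018contact}), which translates to a lower bound on the degrees of generators of $SH^{S^1}_*(X_\Sigma)$, this $u$-periodicity forces $SH^{S^1}_*(X_\Sigma)=0$.

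\textbf{Main obstacle.} The delicate step is the rigorous implementation of the $S^1$-symmetry vanishing argument in the first paragraph. While positive Hamiltonian $S^1$-actions are a standard source of vanishing in symplectic homology, checking transversality and setting up compatible admissible Hamiltonians on a toric Liouville manifold, where the compact exceptional divisors are middle-dimensional and $\TT^{n+1}$-invariant, demands some care (the subcritical Weinstein argument does not apply here because the Lagrangian skeleton has middle-dimensional cells). In practice the cleanest route is to invoke McLean--Ritter directly, after verifying that the isolated singularity $X_\sigma$ is Gorenstein and that $X_\Sigma$ is a smooth crepant resolution, both of which are built into the toric diagram $D$ and the unimodular triangulation $\T$.
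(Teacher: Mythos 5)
The key step — the vanishing of $SH_\ast(X_\Sigma)$ — is not actually established by your argument. The ``null-homotopy of the identity'' mechanism you invoke is not the standard consequence of a positive Hamiltonian $S^1$-action (that is a displaceability-type argument, which is unavailable here precisely because, as you note, the skeleton has middle-dimensional compact components), and your fallback of citing McLean--Ritter does not cover the general case: their vanishing theorem is for crepant resolutions of isolated finite quotient singularities $\CC^{n+1}/G$, which among toric Gorenstein cones occurs only when $D$ is a simplex (the paper itself points out in the introduction that the overlap with \cite{mclean2018mckay} is exactly the lens-space case). For a general toric diagram the singularity $X_\sigma$ is not a quotient singularity, so neither of your two routes closes the argument.

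The mechanism that actually works — and is what the paper does — is an index-growth argument using the toric structure directly. Take the family of Hamiltonians $H_\alpha = \alpha\langle\mu,\nu\rangle$ with $\nu=(v,1)$, $v\in\inte D$. For all but countably many $\alpha$ there are \emph{no} non-constant $1$-periodic orbits (the flow is toric, so simple closed orbits have finitely many periods for each $\alpha$), and the only generators are the constant orbits at the vertices $m_T$ of $P$. Writing $\nu=\sum_{j=1}^{n+1}b_j\nu_j$ in terms of the normals at such a vertex forces $\sum_j b_j=1$, and in adapted complex coordinates $H_\alpha=\sum_j\tfrac{\alpha b_j}{2}|z_j|^2$, whence the Conley--Zehnder index is $\sum_{j=1}^{n+1}\bigl(2\lfloor\alpha b_j\rfloor+1\bigr)>2\alpha-n-1\to+\infty$. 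Since symplectic homology is the colimit over $\alpha\to\infty$, every class dies, and the same computation kills the $S^1$-equivariant theory at once — so your Gysin-sequence step, while correct in principle (it does require justifying that $SH^{S^1}_\ast$ is supported in degrees bounded below, e.g.\ via the positive/negative long exact sequence and the lower bound on Conley--Zehnder indices of Reeb orbits), is not needed.
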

\begin{proof}
We let $\nu=(v, 1)\in \RR^{n+1}$ be a vector associated to a certain Reeb vector field $R_\nu$ in $M$. 
Given $\alpha\in \RR^+$ we define a Hamiltonian $H_\alpha: X_\Sigma\to \RR$ by 
$$H_\alpha(x)=\alpha\langle \mu(x), \nu\rangle$$
where $\mu: X_\Sigma\to \RR^{n+1}$ is the moment map of $X_\Sigma$ (whose image is a cone $P$). 
The corresponding Hamiltonian vector field is toric and so, for each $\alpha\in \RR^+$, there are only finitely many 
periods of its simple non-constant closed orbits. Hence, for almost all values of $\alpha\in \RR^+$ the only $1$-periodic 
Hamiltonian orbits of $H_\alpha$ are the constant orbits corresponding to critical points of $H_\alpha$ and we will only consider
those values of $\alpha$. 
The critical points of $H_\alpha$ correspond to vertices of $P$. More precisely, if $m$ is a vertex of $P$ 
then $\mu^{-1}(m)=\{p\}$ and $p$ is a critical point of $H_\alpha$. 
We will show that the indices of all the constant orbits $c_p$ get arbitrarily large when we let $\alpha$ go to 
$+\infty$, which proves the vanishing claimed by the definition of symplectic homology as a colimit of Floer homology of Hamiltonians with slope going to $+\infty$.

Let $m\in P$ be a vertex and let $\nu_1, \ldots, \nu_{n+1}$ be the normals to the facets intersecting at $m$. Write 
$\nu=\sum_{j=1}^{n+1} b_j \nu_j$; note that this equality implies $\sum_{j=1}^{n+1}b_j=1$ by looking at the last coordinate. By changing coordinates we may assume that $\nu_j=e_j$ are the coordinate vectors and $\nu=(b_1, \ldots, b_{n+1})$, thus, near $p$ we can give complex coordinates $z_1, \ldots, z_{n+1}$ to $X_\Sigma$ such that $p$ corresponds to all $z_j=0$ and 
$$H_\alpha(z_1, \ldots, z_{n+1})=\sum_{j=1}^{n+1}\frac{\alpha b_j}{2}|z_j|^2.$$
Thus the flow (near $p$) is given by
$$\varphi_t(z_1, \ldots, z_{n+1})=\left(e^{2\pi it\alpha b_1}z_1, \ldots, e^{2\pi it\alpha b_d}z_1\right).$$
The above condition on the values of $\alpha$ implies that $\alpha b_j \not\in \ZZ$, $j=1,\ldots, n+1$,
and the Conley-Zehnder index is
$$\sum_{j=1}^{n+1}\left(2\lfloor \alpha b_j\rfloor+1\right)>\sum_{j=1}^{n+1}\left(2\alpha b_j-1\right)=2\alpha-n-1.\qedhere$$

\end{proof}

The long exact sequence for positive/negative symplectic (co)homology (see \cite[Proposition 1.5]{viterbo1999functors} and \cite[Lemma 4.8]{oancea2013gysin}) gives isomorphisms
$$SH_\ast^{+}(X_\Sigma)\overset{\cong}{\rightarrow} H_{\ast+n}(X_\Sigma, M)\textup{ and }SH_\ast^{+, S^1}(X_\Sigma)\overset{\cong}{\rightarrow} H_{\ast+n}^{S^1}(X_\Sigma, M).$$
Here $H_{\ast+n}^{S^1}(X_\Sigma, M)$ denotes $S^1$-equivariant homology with respect to the trivial $S^1$-action on $(X_\Sigma, M)$, that is, $H_{\ast}^{S^1}(X_\Sigma, M)\cong H_\ast(X_\Sigma, M)\otimes \ZZ[u]$ where $u$ is a generator in degree $2$. Thus as vector spaces we have
$$SH^{+, S^1}_\ast(X_\Sigma)=\bigoplus_{k\geq 0}H_{\ast+n-2k}(X_\Sigma, M)\cong \bigoplus_{k\geq 0}H^{n+2-\ast+2k}(X_\Sigma)$$
where we used Lefschetz duality for the last isomorphism. The isomorphism between (linearized) contact homology and positive 
equivariant symplectic homology in \cite[Theorem 1.4]{oancea2014linearized} 
then gives\footnote{The shift in the index between contact homology and positive equivariant symplectic homology is the SFT 
shift in contact homology, which was not used in \cite{oancea2014linearized}.}
$$HC_\ast(M)=SH^{+, S^1}_{\ast-n+2}(X_\Sigma)\cong \bigoplus_{k\geq 0}H^{2n-\ast+2k}(X_\Sigma).$$

\section{Contact homology of prequantization}
\label{baseorbifold}
An important way in which contact manifolds appear is as prequantizations of symplectic manifolds/orbifolds; we briefly explain this construction in the toric case. Let $B$ be a compact symplectic toric orbifold of dimension $2n$. Compact symplectic toric orbifolds are classified by simple rational polytopes with facets labelled by positive integers (see \cite{lerman2002contact} or \cite{lerman1997hamiltonian}).
The polytope $\Delta$ is the image of the moment map on $B$ and the label of a facet $F$ is the order of the structure group of points in $\mu^{-1}(\inte F)$. We write the polytope $\Delta$ as
\begin{equation}\Delta=\{x\in \RR^{n}: \langle x, v_i \rangle+b_i\leq 0, \textup{ for } i=1, \ldots, d\}.\label{polytope}\end{equation}
It is convenient to choose $v_i$ to be given by $v_i=\eta_i \widetilde{v_i}$ where $\eta_i$ is the label on the facet normal to $v_i$ and $\widetilde{v_i}\in \ZZ^n$ is the primitive inwards pointing vector; we will call these $v_i$ the weighted normals. From the weighted normals we can recover the labels $\eta_i$ as the greatest common divisor of the components of $v_i$. The construction of $B$ from the labelled polytope is similar to the Delzant construction with weighted normals $v_i$ replacing the primitive normals. 

We now define prequantization of orbifolds. This construction in the smooth case was introduced by Boothby and Wang in \cite{boothby1958contact} and was adapted to the orbifold case in \cite{thomas1976almost}; contact forms arising from prequantization of manifolds (orbifolds) are called (almost) regular contact forms.

\begin{definition}
Let $(M, \xi)$ be a contact $(2n+1)$-manifold with contact form $\alpha$ and let $(B, \omega)$ be a symplectic $2n$-orbifold. We say that $M$ is the prequantization of $B$ if 
\begin{enumerate}
\item The flow of the Reeb vector field $R_\alpha$ is $1$-periodic and induces an almost free action of $S^1=\RR/\ZZ$ on $M$;
\item $B$ is the quotient space $M/S^1$; so there is a principal $S^1$-orbibundle $\pi: M\to B$;
\item $\pi^\ast \omega=d\alpha$.
\label{prequantization}
\end{enumerate}
\end{definition}

Note that $\alpha\in \Omega^1(M)$ is the connection form, $d\alpha\in \Omega^2(M)$ the curvature form and by (3) in the definition $[\omega]\in H^2_{dR}(B)$ is the characteristic class classifying the $S^1$-orbibundle $M\to B$. In particular $[\omega]$ has to be integral (for details on de Rham cohomology of orbifolds see \cite[Section 2.1]{adem2007orbifolds}). Conversely, from a symplectic orbifold $(B, \omega)$ such that $[\omega]$ is an integral cohomology class one can construct a $S^1$-orbibundle with a contact form $\alpha$.

Another way to describe prequantization is as follows: we take $W'\to B$ to be the complex line bundle with Chern class $[\omega]$ and let $M\subseteq W'$ be the unit circle bundle (giving some Hermitian structure to $M\to W'$); then $W=W'\setminus\{\textup{zero section}\}$ is the symplectization of $M$. Moreover, the symplectic reduction of $W$ with respect to the $S^1$ action gives back $B$. 

\subsection{Prequantization of toric orbifolds}

Let $(B, \omega)$ be a compact symplectic toric orbifold as before, classified by a moment polytope $\Delta=\mu(B)$ as in \eqref{polytope} and set of weights $\{\eta_i\}$. According to theorem 6.3 in \cite{guillemin1994kaehler} the Poincaré dual of the class of $\omega$ is
$$PD[\omega]=\sum_{i=1}^d b_i[\mu^{-1}(F_i)]$$
where $[\mu^{-1}(F_i)]\in H_{2n-2}(B)$ is the homology class represented by the manifold that is the pre-image of the facet $F_i$ (normal to $v_i$) under the moment map. To guarantee that this class is integral we assume that $b_i\in \ZZ$. Note that, unlike in the smooth/Delzant case, this does not imply that $\Delta$ is integral. 

In \cite{lerman2002maximal} the prequantization (in the smooth case) is described as follows: let $C\subseteq \RR^{n+1}$ be the cone over $\Delta\times \{1\}$, that is, 
\begin{align}
C&=\{t(x,1): t\in \RR^+_0, x\in \Delta\}\nonumber \\
&=\{y\in \RR^{n+1}: \langle y, \nu_i\rangle \geq 0 \textup{ for }i=1, \ldots, d\}
\label{cone}
\end{align}
where $\nu_i=(v_i, b_i)\in \RR^{n+1}$. Then the prequantization of $B$ is the contact manifold $M$  corresponding to $C$ described in section \ref{toriccontact} with contact form $\alpha=\alpha_{e_{n+1}}$ corresponding to the toric Reeb vector $e_{n+1}$. This is still true if $B$ is a orbifold.

\begin{proposition}[Lemma 3.7, \cite{lerman2002maximal}]
Let $(B, \omega)$ be a $2n$-dimensional compact symplectic toric orbifold with labelled moment polytope $(\Delta, \{\eta_i\})$ as in \eqref{polytope}. Let $C\subseteq \RR^{n+1}$ be the cone in \eqref{cone}; assume that $C$ is smooth. Now let $(M, \alpha)$ be the contact toric manifold corresponding to $C$ where $\alpha$ is the toric contact form with $R_\alpha=R_{e_{n+1}}$. Then the prequantization of $(B, \omega)$ is $(M, \alpha)$.
\end{proposition}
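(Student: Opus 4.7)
The plan is to realize both $(M, \alpha_{e_{n+1}})$ and the prequantization of $(B,\omega)$ as reductions from $\CC^d$ by specific subgroups of $\TT^d$, and to observe that the prequantization structure arises from a residual $S^1$-action. First I would introduce two subgroups: $K_B := \ker \beta_B \subset \TT^d$, where $\beta_B \colon \TT^d \to \TT^n$ is represented by the matrix $[v_1 \mid \cdots \mid v_d]$ of weighted normals, and $K_M := \ker \beta_M \subset \TT^d$, where $\beta_M$ is represented by $[\nu_1 \mid \cdots \mid \nu_d]$ with $\nu_i = (v_i, b_i)$. By the Lerman--Tolman extension of the Delzant construction to labelled polytopes, $(B,\omega)$ is the symplectic reduction $\mu^{-1}_{K_B}(\lambda)/K_B$ of $\CC^d$ at a level $\lambda = \lambda(b)$ determined by $b_1,\ldots,b_d$; by construction (cf.\ equation~(\ref{eq:defK})), the symplectic cone $(W,\om,X)$ that is the symplectization of $(M,\xi)$ is the reduction $\mu^{-1}_{K_M}(0)/K_M$.

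Next I would verify the inclusion $K_M \subset K_B$, which is immediate from the defining conditions (since $\sum y_j \nu_j \in 2\pi\ZZ^{n+1}$ forces $\sum y_j v_j \in 2\pi \ZZ^n$), and identify $K_B/K_M$ with $S^1$. The short exact sequence $1 \to S^1 \to \TT^{n+1} \to \TT^n \to 1$ coming from projection onto the first $n$ coordinates (with kernel generated by $e_{n+1}$) induces a well-defined map $K_B/K_M \to S^1$; that it is an isomorphism follows from a rank count using the hypothesis that $C$ is a good cone, which guarantees that $M$ is smooth and that $K_M$ has the expected codimension inside $K_B$.

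With this in hand, I would apply reduction in stages. The residual $S^1 = K_B/K_M$ acts on $W$ with moment map equal to the $(n+1)$-th component of $\mu_W \colon W \to C$, since its Lie algebra corresponds to the direction $e_{n+1} \in \RR^{n+1}$. The level set $\mu_{W,n+1}^{-1}(1)$ is the slice of the cone at height $1$, which by construction of $M$ from $C$ is precisely $M$, and the toric contact form $\alpha_{e_{n+1}}$ is the restriction to $M$ of the Liouville $1$-form on $W$. Reduction in stages then identifies $M/S^1 \cong B$, once one checks that the level of the $K_B$-reduction from $\CC^d$ matches the labelled data of $(B,\omega)$, i.e.\ that the resulting moment polytope is $\Delta = \{x : \langle x, v_i \rangle + b_i \leq 0\}$ with facet labels $\eta_i$.

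The final piece is the curvature equation $\pi^*\om = d\alpha_{e_{n+1}}$, which is a standard consequence of symplectic reduction at a regular level of a circle action: the Liouville $1$-form restricts to a connection form on the principal $S^1$-orbibundle $M \to B$ whose curvature is the pull-back of the reduced symplectic form. The main obstacle I anticipate is the linear-algebra bookkeeping to match the level $\lambda(b)$ of the $K_B$-reduction with the constants $b_i$ defining $\Delta$: this requires carefully tracking how the facet labels $\eta_i$, which encode the orbifold structure along $\mu_B^{-1}(\inte F_i)$, interact with the $b_i$-levels that appear in the cone description $C = \{\langle y, \nu_i\rangle \geq 0\}$.
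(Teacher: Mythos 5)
Your argument is correct in outline, but it takes a genuinely different route from the paper. The paper's proof is essentially a citation: it invokes Lerman's Lemma 3.7 from \cite{lerman2002maximal}, which proves the statement for smooth $B$ by directly checking that the $S^1$-action generated by $R_{e_{n+1}}$ is free (it suffices to check on the rays $\RR_{>0}(v^\ast,1)$ over vertices $v^\ast$ of $\Delta$, where freeness follows from $\{(v_\ell,b_\ell)\}_\ell\cup\{e_{n+1}\}$ being a $\ZZ$-basis), and then observes that the only change in the orbifold case is that this set is merely linearly independent, so it generates a finite-index sublattice of $\ZZ^{n+1}$ whose index is the order of the isotropy group — hence the action is almost free rather than free. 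You instead rebuild everything by reduction in stages from $\CC^d$, using $K_M\subset K_B$ with $K_B/K_M\cong S^1$ the circle generated by $e_{n+1}$; this is sound, and the bookkeeping you flag does work out: for $y\in\mathfrak k_M$ one has $\sum_j y_j b_j=0$, so the Lerman--Tolman level $\lambda(b)$ restricts to $0$ on $\mathfrak k_M$, matching the level-$0$ reduction defining $W$, and the residual moment map $\langle\mu_W,e_{n+1}\rangle$ has level set exactly $M$. Your approach buys a self-contained construction in which $\pi^\ast\omega=d\alpha$ falls out of reduction at a regular level; its cost is that the one point the paper actually needs to argue — almost-freeness of the $S^1$-action and the identification of the isotropy orders — is absorbed as a black box into the Lerman--Tolman statement that $K_B$ acts almost freely on $\mu_{K_B}^{-1}(\lambda)$, rather than being exhibited concretely via the lattice index of $\{(v_\ell,b_\ell)\}_\ell\cup\{e_{n+1}\}$ as the paper does. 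If you adopt your route, you should still record that isotropy computation explicitly, since it is what makes $M\to B$ a principal $S^1$-\emph{orbi}bundle in the sense of the definition of prequantization.
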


\begin{proof}
This was proved when $B$ is smooth in \cite{lerman2002maximal}. The proof works just the same except that instead of proving 
that the $S^1$ action induced by the Reeb flow of $R_\alpha=R_{e_{n+1}}$ is free we prove that it is almost free. It was argued that 
to show the freeness of the action it was enough to check in the rays of $\RR^+(v^\ast, 1)$ of $C$ where $v^\ast$ is a vertex of 
$\Delta$ and that this follows from the fact that $\{(v_\ell, b_\ell)\}_\ell \cup \{e_{n+1}\}\subseteq \ZZ^{n+1}$ is a $\ZZ$-basis where 
$\ell$ runs the indices of the $n$ facets of $\Delta$ whose intersection is $v^\ast$; this followed by the Delzant condition on $\Delta$. 
This is not true anymore, but it is true that $\{(v_\ell, b_\ell)\}_\ell \cup \{e_{n+1}\}$ is linearly independent, so it generates a lattice with 
finite index over $\ZZ^{n+1}$, and this index is the order of the isotropy group of $x$ for $x$ with image in the ray $\RR^+(v^\ast, 1)$.
\qedhere
\end{proof}

\begin{remark}
We will always assume $\Delta$ to be such that the cone $C$ over $\Delta$ is good, as this is necessary for $M$ to be smooth (otherwise $M$ would be a contact orbifold).
\end{remark}

We now give a description of the polytopes $\Delta$ for which the prequantization $M$ is Gorenstein. Note that in this section we will only consider Gorenstein contact manifolds (meaning that $c_1(\xi)=0$ in $H^2(M;\ZZ)$ or, equivalently, $m=1$ in the notation of the previous sections) and not $\QQ$-Gorenstein as we did previously.

\begin{definition}
We say that a polytope $\Delta$ as in \eqref{polytope} is almost-reflexive if $b_i=1$ for $i=1, \ldots, d$.

We say that $\Delta$ is $r$-Gorenstein if $r\Delta$ is almost-reflexive (up to translation) and the cone $C$ over $\Delta$ is a good cone. 
\end{definition}

Note that an integral almost-reflexive polytope is a reflexive polytope.

\begin{proposition}
Let $(B, \omega)$ be a compact toric symplectic orbifold with labelled polytope $\Delta$. Then its prequantization $(M, \alpha)$ is a smooth toric Gorenstein contact manifold if and only if $\Delta$ is $r$-Gorenstein for some $r\in \ZZ^+$.\label{rgorenstein}
\end{proposition}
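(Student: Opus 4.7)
The approach is to translate both conditions---$M$ being Gorenstein and $\Delta$ being $r$-Gorenstein for some $r$---into the existence of a single integer linear functional $\nu^\ast = (w,r) \in (\ZZ^{n+1})^\ast$ evaluating to $1$ on the primitive defining normals of the cone $C$, and then observe that these reformulations coincide. The definition of $r$-Gorenstein already bakes in the goodness of $C$, which is exactly the condition for $M$ to be smooth, so I only need to match the algebraic Gorenstein conditions on the two sides.

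First I would apply Proposition~\ref{prop:c_1} with $m=1$ to $C$: the contact manifold $(M,\alpha)$ is Gorenstein if and only if there exists $\nu^\ast = (w, r) \in (\ZZ^{n+1})^\ast$, with $w \in (\ZZ^n)^\ast$ and $r \in \ZZ$, such that $\nu^\ast$ takes the value $1$ on every primitive defining normal of $C$. Under the assumption that the vectors $\nu_i = (v_i, b_i)$ associated to $\Delta$ are themselves primitive, this is the system
\[
\langle w, v_i \rangle + r b_i = 1, \quad i = 1, \ldots, d.
\]
Next, a direct computation from the description $\Delta = \{x : \langle x, v_i\rangle + b_i \le 0\}$ shows that $r\Delta - w$ is cut out by
\[
\langle y, v_i \rangle + \bigl(r b_i + \langle w, v_i \rangle\bigr) \le 0, \quad i = 1, \ldots, d.
\]
Hence $r\Delta - w$ is almost-reflexive precisely when $r b_i + \langle w, v_i\rangle = 1$ for all $i$, matching the Gorenstein equation above via $p = -w$. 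This already gives the equivalence at the level of the algebraic identity.

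It then remains to verify (i) that $r$ may be taken to be a positive integer, and (ii) that in the reverse direction the a priori real translation vector is integral. For (i), since $\nu^\ast(\nu_i) = 1 > 0$ for every $i$, $\nu^\ast$ lies in the interior of $C$; because $C$ is the cone over $\Delta\times\{1\}$, its interior sits inside $\{(y,t) : t > 0\}$, forcing $r > 0$. For (ii), the good-cone hypothesis implies that at each vertex of $C$ the $n$ meeting primitive normals can be completed to a $\ZZ$-basis of $\ZZ^{n+1}$. Since $\nu^\ast$ takes the integer value $1$ on each primitive normal, it is integer-valued on such a basis and thus lies in $(\ZZ^{n+1})^\ast$; in particular $p = -w \in \ZZ^n$, so the translation witnessing $r$-Gorenstein is automatically integral.

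The main obstacle I anticipate is reconciling the primitivity assumption used throughout: Proposition~\ref{prop:c_1} is stated for primitive defining normals, while the vectors $\nu_i = (v_i, b_i) = (\eta_i \widetilde v_i, b_i)$ coming from a labelled polytope are only forced to be primitive when $\gcd(\eta_i, b_i) = 1$. One must either argue that this coprimality is automatic under the integrality conventions of prequantization (so that a $\nu^\ast \in (\ZZ^{n+1})^\ast$ with $\nu^\ast(\nu_i) = 1$ exists only when $\nu_i$ is already primitive), or else suitably reinterpret the $r$-Gorenstein condition in the non-primitive case. This is the step I expect to require the most care.
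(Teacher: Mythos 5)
Your proof is correct and follows essentially the same route as the paper: apply Proposition~\ref{prop:c_1} with $m=1$, write $\nu^\ast=(w,r)$, and observe that $\nu^\ast(\nu_i)=\langle w,v_i\rangle+rb_i=1$ is exactly the condition that $r\Delta$, translated by $w$, is almost-reflexive. The extra checks you flag all go through: positivity of $r$ and integrality of $w$ follow as you indicate, and the primitivity worry dissolves because the good-cone hypothesis — present on both sides of the equivalence, being required for smoothness of $M$ and built into the definition of $r$-Gorenstein — already forces each $\nu_i=(v_i,b_i)$ to be primitive (and in any case an integral vector on which an integral functional takes the value $1$ is automatically primitive).
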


\begin{proof}
The condition that the cone $C$ is a smooth cone is necessary to ensure that $M$ is smooth. 

By Proposition~\ref{prop:c_1}, the prequantization $M$ of $B$ has trivial first Chern class if and only if there is $\nu^\ast\in (\ZZ^{n+1})^\ast$ such that $\nu^\ast(\nu_j)=1$ for $j=1, \ldots, d$, where $\nu_j=(v_j, b_j)$. Any such $\nu^\ast$ can be uniquely written as $\nu^\ast(y)=\langle (w, r), y\rangle$ for some $w\in \ZZ^n, r\in \ZZ$. So $M$ has trivial first Chern class if and only if there are $w, r$ such that
$$\langle x, \nu_j\rangle +b_j\geq 0\Leftrightarrow \langle rx, \nu_j\rangle +(1-\langle w, \nu_j\rangle)\geq 0 \Leftrightarrow \langle rx-w, \nu_j\rangle+1\leq 0.$$
But this condition is equivalent to $r\Delta$ being almost-reflexive (after a translation by $w$).\qedhere
\end{proof}

\begin{remark}\label{rem: deltaDcorrespondence}
Note that when $\Delta$ is $1-$Gorenstein, in which case we say that $\Delta$ is orbi-reflexive, the cone $C$ has normals $\nu_i=(v_i,1)$, so the toric diagram associated to $M$ is $D=\textup{conv}(v_1, \ldots, v_d)=\Delta^\circ$ where $v_i$ are the (weighted) normals of $\Delta$. When $r>1$ we have to take a change of basis to transform $\nu_j=(v_j, b_j)$ in $\mu_j=(u_j, 1)$ which amounts to finding a matrix $A\in M_{n, n+1}(\ZZ)$ such that 
$$\begin{bmatrix}
A\\ 
\begin{matrix}
-w-&
r
\end{matrix}
\end{bmatrix}\in GL_{n+1}(\ZZ).$$
In this case $u_j=A \nu_j^t$ and $D=\textup{conv}(u_1, \ldots,  u_d)$.
\end{remark}

We also remark that any toric (Gorenstein) contact manifold is the prequantization of some orbifold. Indeed suppose that $\frac{w}{r}\in \textup{int}\, D\cap \QQ^n$ with $w\in \ZZ^n$ and $r\in \ZZ^+$. Then the Reeb flow of the contact form $\alpha_\nu$ where $\nu=(w, r)$ induces an $S^1$ action on $M$ and the quotient $M/S^1$ is a symplectic toric orbifold $B$ with a moment polytope $\Delta$ which is $r$-Gorenstein and which can be obtained explicitly by reverting the above construction. In a basis free way, the normals to the facets of $\Delta$ are the projections of $\mu_j$ in $\ZZ^{n+1}/\nu \ZZ\cong \ZZ^n$. 

At least in the case that $B$ is smooth, this number $r$ can be characterized geometrically.

\begin{proposition}
\label{chernclassr}
Suppose we have the conditions of Proposition \ref{rgorenstein} and moreover $B$ is smooth. Then $B$ is monotone and $c_1(TB)=r[\omega].$
\end{proposition}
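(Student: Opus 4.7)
The plan is to express both $c_1(TB)$ and $[\omega]$ as linear combinations of the classes of the toric prime divisors $D_i := \mu^{-1}(F_i)$ and to recognize the difference $c_1(TB) - r[\omega]$ as a standard linear relation in the cohomology of the smooth toric variety, forced to vanish.

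First I would recall the two formulas. The paper has already cited the Guillemin formula
\[
\mathrm{PD}[\omega] = \sum_{i=1}^d b_i\,[D_i] \in H_{2n-2}(B;\RR)\,,
\]
and since $B$ is smooth and compact the anti-canonical class of the underlying toric variety is the sum of its prime divisors, so
\[
\mathrm{PD}[c_1(TB)] = \sum_{i=1}^d [D_i]\,.
\]
Subtracting $r$ times the first from the second gives
\[
\mathrm{PD}\bigl(c_1(TB) - r[\omega]\bigr) = \sum_{i=1}^d (1 - r b_i)\,[D_i]\,.
\]

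Next I would unpack the $r$-Gorenstein hypothesis: by assumption there exists $w\in\RR^n$ such that $r\Delta - w$ has all support constants equal to $1$, which after substitution in the defining inequalities of $r\Delta$ yields $\langle w, v_i\rangle + r b_i = 1$, i.e.\ $1 - r b_i = \langle w, v_i\rangle$ for every $i$. Hence the display above becomes $\sum_{i=1}^d \langle w, v_i\rangle\,[D_i]$, which vanishes in $H_{2n-2}(B;\RR)$ by the classical linear relation
\[
\sum_{i=1}^d \langle u, v_i\rangle\,[D_i] = 0\,,\qquad u\in\RR^n\,,
\]
coming from the principal divisor of the torus character $\chi^u$ on $B$, extended by linearity from the dual lattice. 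Since $H^2$ of a smooth compact toric manifold is torsion-free, the identity $c_1(TB) = r[\omega]$ descends to $H^2(B;\ZZ)$, and monotonicity is then immediate from $r>0$.

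The argument is essentially bookkeeping: the $r$-Gorenstein condition is designed precisely to convert the discrepancy between $c_1(TB)$ and $r[\omega]$ into one of the standard toric linear relations, so I do not anticipate any substantial obstacle. The only subtlety worth flagging is the mild abuse of allowing the translation vector $w$ to be real rather than integral, which is harmless once one works in $\RR$-cohomology and uses torsion-freeness of $H^2(B;\ZZ)$ to descend.
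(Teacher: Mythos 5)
Your proof is correct, but it takes a different route from the paper. The paper's own argument is a two-line reduction: for $r=1$ the polytope is reflexive and the identity $c_1(TB)=[\omega]$ is classical, and for general $r$ one observes that replacing $\Delta$ by $r\Delta$ rescales $[\omega]$ by $r$ while leaving $B$ and $c_1(TB)$ unchanged. You instead prove the statement directly in the Picard group: you combine $c_1(TB)=\sum_i[D_i]$, Guillemin's formula $\mathrm{PD}[\omega]=\sum_i b_i[D_i]$, the relation $\langle w,v_i\rangle + r b_i=1$ extracted from the $r$-Gorenstein condition (which is exactly what the proof of Proposition~\ref{rgorenstein} establishes), and the linear equivalence $\sum_i\langle u,v_i\rangle[D_i]=0$ coming from the principal divisor of $\chi^u$. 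This is in effect the cohomological counterpart of the moment-cone computation in Proposition~\ref{prop:c_1}, specialized to the base rather than the cone. What your version buys is self-containedness: it does not outsource the $r=1$ case to "well known", and it makes transparent that the $r$-Gorenstein condition is precisely the statement that $c_1(TB)-r[\omega]$ is one of the standard toric relations. The paper's version is shorter and highlights the scaling behaviour of $[\omega]$. Your closing remarks about working in $\RR$-cohomology and descending via torsion-freeness of $H^2$ of a smooth compact toric manifold are the right way to handle the non-integral translation vector $w$; no gap there.
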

\begin{proof}
If $r=1$ then $\Delta$ is a reflexive polytope and this is well known. 
In the general case we note that scaling the polytope $\Delta$ by a factor of $r$ scales $[\omega]$ by a factor of $r$ and does not affect $c_1(TB)$.
\end{proof}

Assume $B$ is smooth. The long exact sequence for the $S^1$-bundle gives
$$\pi_2(M)\rightarrow \pi_2(B)\overset{\partial}{\rightarrow} \pi_1(S^1)\cong \ZZ\rightarrow \pi_1(M)\rightarrow \pi_1(B).$$

We claim that the map $\partial$ is given by evaluating $[\beta]\in \pi_2(B)$ at the characteristic class $[\omega]$. Let $\beta: S^2\to B$ be a representative of $[\beta]\in \pi_2(B)$ and let $\widetilde \beta: D^2\to M$ be the composition of $\beta$ with the map $D^2\to S^2$ that collapses the boundary $\partial D^2\cong S^1$ to a point. Then $[\partial \beta]=[\widetilde \beta_{|S^1}]$ and we have
$$\langle[\omega], [\beta]\rangle =\int_{S^2}\beta^\ast \omega=\int_{D^2}\widetilde \beta^\ast \pi^\ast \omega=\int_{D^2}\widetilde \beta^\ast d\alpha=\int_{S^1}
(\widetilde \beta_{|S^1})^\ast \alpha=\deg(\partial [\beta]).$$

It follows that, if $B$ is simply connected as is always the case for toric symplectic manifolds, then:
$$\pi_1(M)\cong \ZZ/\langle [\omega], \pi_2(B)\rangle.$$

\begin{remark}
Each edge $e\in \Delta_1$ corresponds to a sphere $S_e=\mu^{-1}(e)\subseteq B$. It is well known that the classes $[S_e]$ of these spheres generate $\pi_2(B)\cong H_2(B)$.

Moreover $\langle [\omega], [S_e]\rangle=\ell(e)$ is the integral length of $e$ (that is, the number of integral interior points +1), so it follows that $\langle [\omega], \pi_2(B)\rangle=p\ZZ$ where
$$p=\gcd\{\ell(e): e\in \Delta_1\}.$$
Comparing with Proposition 2.10 in~\cite{abreu2018contact}, it follows that
$$p=\gcd\left\{\begin{vmatrix}
\vert &  & \vert\\ 
\nu_{i_1} & \ldots & \nu_{i_{n+1}}\\ 
\vert &  & \vert
\end{vmatrix}: 1\leq i_1<\ldots<i_{n+1}\leq d\right\}.$$ 
It is possible to prove this combinatorial fact directly.
\end{remark}

This last fact gives a relation between $r$ and the minimal Chern number. Recall that the minimal Chern number of $B$ is $k\in \ZZ^+$ such that $\langle c_1(TB), \pi_2(B)\rangle=k\ZZ$.

\begin{corollary}
Suppose we have the conditions of Proposition \ref{rgorenstein} and moreover $B$ is smooth. Then the minimal Chern number of $B$ is $k=rp$ where $p=|\pi_1(M)|$.
\end{corollary}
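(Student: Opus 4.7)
The proof should be essentially a one-line deduction from the ingredients already assembled immediately before the corollary, so the plan is to spell out that calculation cleanly.

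The plan is to start from the definition of the minimal Chern number, namely $\langle c_1(TB),\pi_2(B)\rangle = k\ZZ$, and substitute the identity $c_1(TB) = r[\omega]$ from Proposition~\ref{chernclassr} (which applies because the hypotheses of Proposition~\ref{rgorenstein} together with smoothness of $B$ are exactly what is assumed). This gives
\[
k\ZZ \;=\; \langle c_1(TB),\pi_2(B)\rangle \;=\; \langle r[\omega],\pi_2(B)\rangle \;=\; r\,\langle [\omega],\pi_2(B)\rangle.
\]

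Next I would invoke the description of $\pi_1(M)$ derived from the Gysin/homotopy long exact sequence of the $S^1$-bundle $M\to B$ displayed just above the corollary: since $B$ is smooth and toric it is simply connected, so
\[
\pi_1(M) \;\cong\; \ZZ/\langle [\omega],\pi_2(B)\rangle,
\]
which forces $\langle [\omega],\pi_2(B)\rangle = p\ZZ$ with $p = |\pi_1(M)|$. Combining the two displays yields $k\ZZ = rp\ZZ$, hence $k = rp$ as required.

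The only step that required any real work has already been carried out in the text (Proposition~\ref{chernclassr} and the computation of $\partial\colon \pi_2(B)\to\pi_1(S^1)$ as evaluation against $[\omega]$), so there is no substantive obstacle left; the corollary is a bookkeeping consequence. If one wanted to present it more concretely one could additionally recall the combinatorial description $p=\gcd\{\ell(e): e\in\Delta_1\}$ to make clear that $p$ is computable from the polytope, but this is optional.
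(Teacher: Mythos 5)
Your proposal is correct and follows exactly the paper's own argument: the paper's proof simply cites Proposition \ref{chernclassr} together with the isomorphism $\pi_1(M)\cong \ZZ/\langle[\omega],\pi_2(B)\rangle$, and you have merely spelled out the same one-line computation $k\ZZ = r\langle[\omega],\pi_2(B)\rangle = rp\ZZ$. No gaps.
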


\begin{proof}
This is clear from proposition \ref{chernclassr} and the isomorphism $$\pi_1(M)\cong \ZZ/\langle [\omega], \pi_2(B)\rangle.\qedhere$$
\end{proof}

\subsection{Contact homology of $M$ from $B$} 

We will now prove Theorem \ref{thm:quotients}, which relates the orbifold cohomology of the base $B$ and the contact homology of its prequantization $M$.  For the convenience of the reader we restate it now.

Let $\nu=(w,r)\in \ZZ^{n+1}$ be a toric Reeb vector and let $\varphi_t: M\to M$ be the flow of $-R_\nu$; this flow is 1-periodic and induces the action of $S^1=\RR/\ZZ$ on $M$. Denote by $M^T$ the fixed point set of $\varphi_T$ and let $B_T=M^T/S^1$. Then the inertia orbifold of $B$ is
\[\bigsqcup_{0<T\leq 1} B_T.\]
In particular, each $B_T$, $0<T<1$, is the disjoint union of twisted sectors and $B_0=B_1=B$. We denote by $F_T^\ast$ the contribution of the twisted sectors contained in $B_T$ to $H^\ast_{orb}(B)$, so that 
\begin{equation}\label{eq: baseorbifolddecomposition}H_{orb}^\ast(B; \QQ)=\bigoplus_{0<T\leq 1} F_T^\ast.
\end{equation}
Note in particular that $F_0=F_1=H^\ast(B; \QQ)$ is the singular cohomology of $B$. 

\begin{theorem}
Suppose we have the conditions of Proposition \ref{rgorenstein}. Then the contact homology of $M$ is completely determined by the decomposition \eqref{eq: baseorbifolddecomposition} and is given by
$$\textup{HC}_\ast(M, \xi)=\bigoplus_{k\geq 0}\bigoplus_{0<T\leq 1}F_T^{\ast-2rT+2-2rk}.$$
\label{chfromorbifold}
\end{theorem} 
\begin{proof}

In this proof we will use the language and the results from appendices \ref{appendix} and \ref{appendixB}. Proposition \ref{orbicohboxsum} describes the orbifold cohomology of the base $B$ in terms of the combinatorics of its fan $\Sigma$. On the other hand, Lemma \ref{lem: ehrhartstartriangulation} describes the Ehrhart series of $D$ in terms of its combinatorics. Our strategy will be to compare the two formulas using the correspondence between cones of $\Sigma$ and faces of $D$. Indeed, we have the following bijections:
\[\{\textup{face of codim}= k\textup{ of }\Delta\}\leftrightarrow\{\textup{cone of dim}=k\textup{ of }\Sigma\}\leftrightarrow \{\textup{face of dim}=k-1\textup{ of }D\}\,.\]
A face of $\Delta$ with normals $v_1, \ldots, v_k$ corresponds to the cone $\tau$ of $\Sigma$ with rays $v_i$ which in turn corresponds to the face $g$ of $D$ with vertices $u_1= A\nu_1,\ldots, u_k=A\nu_k$ (see Remark \ref{rem: deltaDcorrespondence}). We note that under this correspondence there is a bijection between $\bx(\tau)$ (as defined in \eqref{eq: boxtau}) and $\bx(g)$ (as defined in \eqref{eq: boxg}). If
\[\sum_{i=1}^k c_i v_i\in \bx(\tau)\subseteq \ZZ^n\]
then there is a unique $0\leq T<1$ such that
\begin{equation}\label{eq: Tdefinition}\sum_{i=1}^k c_i\nu_i+Te_{n+1}\in \ZZ^{n+1}\,.
\end{equation}
Applying the matrix $$\begin{bmatrix}
A\\ 
\begin{matrix}
-w-&
r
\end{matrix}
\end{bmatrix}\in GL_{n+1}(\ZZ)$$
from Remark \ref{rem: deltaDcorrespondence} we get
\[\sum_{i=1}^k c_i \mu_i+T(w,r)\in \bx(g).\]

Recall that the twisted sectors of $M$ are in bijection with
\[\bx(\Sigma)=\bigcup_{\tau\in \Sigma}\bx(\tau)=\bigcup_{g\subseteq D}\bx(g).\]

\begin{lemma}\label{lem: twistedsectorsbox}
The twisted sectors contributing to the summand $F_T^\ast$ of $H^\ast_{orb}(B)$ are the ones corresponding to elements of \[\bx_T(g)=\{\mu\in \bx(g): T(\mu)=T\}\] 
for some face $g$ of $D$. 
\end{lemma}
\begin{proof}
Consider the commutative diagram of exact sequences
\begin{center}
\begin{tikzcd}
& & S^1\arrow[d, "j"]\\
\widetilde K \arrow[r]\arrow[d]& \TT^d\arrow[r, "\widetilde \beta"]\arrow[d, equal] & \TT^{n+1}\arrow[d]\\
 K \arrow[r]\arrow[d, "\phi"]& \TT^d \arrow[r, "\beta"]& \TT^{n}\\
S^1&  & 
\end{tikzcd}
\end{center}
where 
\[\beta\left(\sum_{i=1}^d c_i e_i\right)=\sum_{i=1}^d c_i v_i\quad,\quad\widetilde \beta\left(\sum_{i=1}^d c_i e_i\right)=\sum_{i=1}^d c_i \nu_i\,,\]
and $K, \widetilde K$ are the kernels of $\beta$ and $\widetilde \beta$, respectively.  The map $j\colon S^1\to \TT^{n+1}$ sends $t\in \RR/\ZZ=S^1$ to $-t e_{n+1}\in \RR^{n+1}/\ZZ^{n+1}=\TT^{n+1}$ and the map $\phi$ is induced by the diagram: if $\kappa\in K$ then $\widetilde \beta(\kappa)=j(t)$ for a unique $t\in S^1$, so we take $\phi(\kappa)=t$. We let $Z=\mu_K^{-1}(b)\subseteq \CC^d$, then we have $M=Z/\widetilde K$ and $B=Z/K$ (see the Appendix \ref{appendix: toricorbifolds} and note that the symplectization of $M$ is by definition the symplectic reduction of $\CC^d$ by $\widetilde K$). The action of $S^1=K/\widetilde K$ on $M=Z/\widetilde K$ is precisely the action of $S^1$ on $M$ induced by the Reeb flow of $-R_{\nu}$.  

As explained in Appendix \ref{appendix}, the twisted sectors of $B$ correspond to elements $\kappa=\sum_{i=1}^d c_i e_i \in K$ such that $Z^\kappa\neq \emptyset$, which in turn correspond to an element
\[\sum_{i\in R(\tau)} c_i v_i\in \bx(\tau)\]
for the cone $\tau\in \Sigma$ generated by the rays $v_i$ for which $c_i\neq 0$. If we let $T$ be such that \eqref{eq: Tdefinition} holds, then
\[\widetilde\beta(\kappa)=\sum_{i=1}^d c_i\nu_i=-T e_{n+1}=j(T)\quad\textup{in }\TT^{n+1}\,,\]
so $\phi(\kappa)=T$. Hence the twisted sector corresponding to $\kappa$ is contained in $B_T=M^T/S^1$, which proves the Lemma.\qedhere
\end{proof}

We denote by $P_q(B_T)$ the Poincaré polynomial of $B_T$ with the orbifold shift, i.e. $P_q(B_T)= \sum_{j\in \QQ} \dim F_T^{2j} q^j$. By (the proof of) Proposition \ref{orbicohboxsum}, Lemma \ref{lem: twistedsectorsbox} and the correspondence between cones of $\Sigma$ and faces of $D$, we have:
\[P_q(B_T)=\sum_{g\subseteq D}\left(\sum_{f\supseteq g}q^{\dim f-\dim g}(1-q)^{n-\dim f-1}\right)\sum_{\mu\in \bx_T(g)}q^{\psi(\mu)}.\]
 By Lemma \ref{lem: ehrhartstartriangulation} it follows that
\[\frac{\sum_{j=0}^n \delta_j q^j}{1-q}=(1-q)^n\Ehr_D(q)=\sum_{0\leq T<1}\frac{q^{rT}}{1-q^{r}}P_q(B_T)\,. \]
We will now replace $q$ by $q^{-1}$ in the previous equality and multiply both sides by $q^n$. The equation then becomes
\begin{equation}\label{eq: proofprequantization}
\frac{q}{1-q}\sum_{j=0}^n \delta_{n-j}q^j=\sum_{0\leq T<1}\frac{q^{r(1-T)+n}}{1-q^{r}}P_{q^{-1}}(B_T).\end{equation}
Note that the left hand side is equal to
\[\frac{q}{1-q}\sum_{j=0}^n \delta_{n-j}q^j=\sum_{j\geq 1}cb_{2j-2}q^j\]
by Theorem \ref{main}. 
By orbifold Poincaré duality \cite[Proposition 3.3.1]{chen2004new}, we have
\[q^n P_{q^{-1}}(B_T)=P_q(B_{1-T})\,.\]
Thus the right hand side of \eqref{eq: proofprequantization} is equal to
\[\sum_{0\leq T<1}\frac{q^{r(1-T)}}{1-q^{r}}P_{q}(B_{1-T})=\sum_{0< T\leq 1}\frac{q^{rT}}{1-q^{r}}P_{q}(B_{T})\]
It follows from comparing the $q^{j+1}$ coefficient on both sides of \eqref{eq: proofprequantization} that
\[cb_{2j}=\sum_{0<T\leq 1}\sum_{k\geq 0}\dim F_T^{2j-2rT-2kT+2},\]
which proves Theorem \ref{chfromorbifold}.\qedhere
\end{proof}

As observed in the introduction, Theorem \ref{chfromorbifold} has two corollaries that were already known. The case $r=1$ is a consequence of Stapledon's result \cite[Theorem 4.3]{stapledon} together with Theorem~\ref{thm:invariants}, see Corollary \ref{cor:Stapledon}. When the base $B$ is smooth, and assuming that contact homology is a well-defined contact invariant, our result specializes to \cite[Proposition 9.1]{bourgeois2002morse}, see Corollary~\ref{cor:Bourgeois}.

\begin{remark}
Suppose that $r=1$ and $B$ is smooth; that is, suppose that $\Delta$ is a reflexive Delzant polytope. Then by Poincaré duality in $B$ we have
$$\delta_j=\dim H^{2j}(B; \QQ)=\dim H^{2(n-j)}(B; \QQ)=\delta_{n-j}.$$
This is a manifestation of Hibi's palindromic theorem, cf. Theorem \ref{hibbi}.

More generally, suppose $r>1$ but $B$ is still smooth, so that Corollary \ref{cor:Bourgeois} applies. For simplicity of notation write $d_j=\dim \textup{HC}_{2j}(M, \xi)$ and $b_j=\dim H_{2j}(B; \QQ)$. We have:

\begin{align*}
\sum_{j=0}^n \delta_{n-j} z^j&=(1-z)\left(\sum_{j\geq 0} d_j z^j\right)=(1-z)\left(\sum_{i=0}^n\sum_{k\geq 0} b_j z^{j+(r-1)+rk}\right)\\
&=z^{r-1}\frac{1-z}{1-z^r}\left(\sum_{i=0}^n b_j z^j\right).
\end{align*}
We used Theorem \ref{main} in the first equality and Corollary \ref{cor:Bourgeois} in the second. Hence $\delta_n=\ldots=\delta_{n-r+2}=0$. Moreover since $\sum_{i=0}^n b_j z^j$ and $\frac{1-z^r}{1-z}$ are both palindromic polynomials it follows that $\frac{1}{z^{r-1}}\sum_{j=0}^n \delta_{n-j} z^j$ is also palindromic, that is, $\delta_{j}=\delta_{n-r+1-j}$ for $j=0, 1, \ldots, n-r+1$. This can be seen from the extension of Hibi's palindromic theorem for Gorenstein polytopes in \cite[Theorem 4]{lee2015extension}.
\end{remark}

\subsection{Examples with a smooth base}

We begin by considering the only two examples of Delzant $r$-Gorenstein polytopes with $n=2$ and $r>1$. These two are the moment polytopes of $(\CC P^2, \omega_{FS})$  and $(S^2\times S^2, \omega_0\oplus \omega_0)$ where $\omega_0$ is the volume form on $S^2$ with total volume $1$. These two have $r=3$ and $r=2$, respectively. These reflexive polytopes are the moment polytopes of $(\CC P^2, 3\omega_{FS})$ and $(S^2\times S^2, 2\omega_0\oplus 2\omega_0)$.

One can construct the toric diagrams of the prequantizations of these four cases as explained above. All of these correspond to familiar contact manifolds according to \cite{abreu2018contact},
namely the sphere $S^5$, the unit cosphere bundle $S^\ast S^3\cong S^2\times S^3$, the Lens space $L^5_3(1,1,1)$ and the unit cosphere bundle $S^\ast \RR P^3\cong S^2\times \RR P^3$.

Now Corollary \ref{cor:Bourgeois} gives the contact homology of these four contact manifolds as sums of the homology of the base. We start with $S^5$ which is the prequantization of $\CC P^2$ with $r=3$. The Betti numbers of $\CC P^2$ are $\dim H_\ast(\CC P^2; \QQ)=1$ if $\ast=0,2,4$ and $0$ otherwise. So the contact Betti numbers of $S^5$ are given as follows:
$$\begin{matrix}
 &  &1  & 1 & 1 & & &\\ 
 &  &  &  &  &1 & 1&1 \\ 
 &&&&&&&&\ddots\\
\hline
 0& 0 & 1 & 1 & 1 & 1&1&1& \ldots
\end{matrix}$$
This should be interpreted as follows: the columns correspond to (even) degrees of homology or contact homology, the first lines are the copies of the homology of the base with the shifts given by Corollary \ref{cor:Bourgeois}, and the last line is the contact homology. So the final result is that $\dim \textup{HC}_{2j}(S^5)=1$ if $j\geq 2$ and $0$ if $j=0, 1$. We play the same game but now for the prequantization of $(\CC P^2, 3\omega_{FS})$ (that is, with $r=1$) to get the following contact Betti numbers for $L^5_3(1,1,1)$:
$$\begin{matrix}
1 &1  &1  &  &  & \\ 
 &1  &1  &1  &  & \\ 
 &  & 1 &1  &1  & \\ 
 &&&&&\ddots\\
\hline
 1& 2 & 3 & 3 & 3 & \ldots
\end{matrix}$$
For $S^\ast S^3$ as the prequantization of $(S^2\times S^2, \omega_0\oplus \omega_0)$ we get the following:
$$\begin{matrix}
 & 1 &2  & 1 &  & \\ 
 &  &  &1  & 2 & 1\\ 
 &&&&&&\ddots\\
\hline
 0& 1 & 2 & 3 & 3 &3& \ldots
\end{matrix}$$
And for $S^\ast \RR P^3$ as the prequantization of $(S^2\times S^2, 2\omega_0\oplus 2\omega_0)$ we find:
$$\begin{matrix}
1 & 2 &1  &  &  & \\ 
 &1  &2  &1  &  & \\ 
 &  & 1 &2  &1  & \\ 
 &&&&&\ddots\\
\hline
 1& 3 & 4 & 4 & 4 & \ldots
\end{matrix}$$

\subsection{Example with orbifold base and $r>1$} \label{topexample}
We consider now an example with $r>1$. We begin again with the lens space $M=L_3^5(1,1,1)$ with toric diagram $D=\textup{conv}(v_1, v_2, v_3)$ with $v_1=(0,0)$, $v_2=(1,0)$ and $v_3=(2,3)$. Take the Reeb vector $\nu=(1,1,2)$, corresponding to the point in the interior of $D$ with coordinates $(1/2, 1/2)$, and let $B$ be the orbifold obtained by quotienting $M$ by the Reeb action. In this situation $r=2$. To find $\Delta$ we perform a change of coordinates in $GL(3, \ZZ)$ sending $\nu$ to $e_3$, for instance:
$$\begin{bmatrix}
-2 &0 &1 \\ 
-1 & 1 & 0\\ 
1 &0  &0 
\end{bmatrix}\begin{bmatrix}
1 &0  &  1&2 \\ 
1 & 0 & 0 & 3\\ 
2 &  1&1  & 1
\end{bmatrix}=\begin{bmatrix}
 0& 1 & -1 & -3\\ 
 0& 0 & -1 & 1\\ 
 1&0  & 1 & 2
\end{bmatrix}\,.$$
Thus 
$$\Delta=\{(x,y)\in \RR^2: x\geq 0, -x-y+1\geq 0, -3x+y+2\geq 0\}.$$
The only point with non-trivial isotropy is the point that maps to the vertex $(3/4, 1/4)\in \Delta$ that corresponds to the edge normal to $\nu_2, \nu_3$ in $C$. Since
$$\begin{vmatrix}1&1&2\\
1&0&3\\
2&1&1\end{vmatrix}=4$$
the structure group at this point is $\ZZ/4\ZZ$ and the only $T$ for which $B_T\neq \emptyset$ are $T=1/4, 2/4, 3/4, 1$. It can be seen that $B$ is the weighted projective space $\CC P^2(4,1,1)$. The non-twisted sector is $B_1=\CC P^2(4,1,1)$ and the twisted sectors are all just a point $B_{1/4}=B_{2/4}=B_{3/4}=\{\ast\}$. These three twisted sectors $B_{k/4}$ correspond to
$$\frac{k}{4}\nu_2+\frac{k}{4}\nu_3+\frac{k}{4}\nu=(k,k,k)\in \bx_{k/4}(\Sigma)\subseteq \ZZ^3 \textup{ for }k=1,2,3.$$
Thus the degree shift associated to $B_{k/4}$ is $2\big(\frac{k}{4}+\frac{k}{4}\big)=k$. Recalling that a weighted projective space has the same singular cohomology as the smooth projective space,
$$F_1=\begin{cases}\QQ &\textup{ if }\ast=0,2,4\\
0 &\textup{ otherwise}
\end{cases} \textup{ and }\, F_{k/4}=\begin{cases}\QQ &\textup{ if }\ast=k\\
0 &\textup{ otherwise}
\end{cases} \textup{ for }k=1,2,3$$
 so the Chen-Ruan cohomology of $\CC P^2(4,1,1)$ is obtained as the sum of these:
\begin{align*}H^\ast_{orb}(\CC P^2(4,1,1); \QQ)=\begin{cases}\QQ &\textup{ if }\ast=0,1,3,4\\
\QQ\oplus \QQ &\textup{ if }\ast=2\\
0 &\textup{ otherwise}
\end{cases}.\end{align*}

Now Theorem \ref{chfromorbifold} computes $\textup{HC}_\ast(L^5_3(1,1,1))$ as a sum with contributions from the different sectors:

$$\begin{matrix}
\,\,\,\;F_1\,\vline&   & 1 & 1 & 2 & 1 & 2 & 1 \\
F_{1/4}\,\vline& 1 &   & 1 &   & 1 &   & 1 \\
F_{2/4}\,\vline&   & 1 &   & 1 &   & 1 &   \\
F_{3/4}\,\vline&  &   & 1 &   & 1 &   & 1 \\ \hline
&1  & 2  & 3 & 3 & 3 & 3 & 3
\end{matrix}$$

Here the first four lines are the ranks contributed by the twisted sector corresponding to $T$ according to Theorem~\ref{chfromorbifold} and the last line is the total rank of the contact homology. For example the first line means
$$F_1^{\ast-2}\oplus F_1^{\ast-6}\oplus F_1^{\ast-10}\oplus \ldots=\begin{cases}\QQ\oplus \QQ &\textup{ if }\ast=4k+2, k>0\\
\QQ &\textup{ if }\ast=2 \textup{ or }\ast=4k, k>0\\
0 &\textup{ otherwise.}
\end{cases}$$
Note that the final result obtained coincides with the previous computation of $\textup{HC}_\ast (L^5_3(1,1,1))$ as one would expect.

\appendix
\section{Chen-Ruan Cohomology of Toric Orbifolds}
\label{appendix}

In this appendix we describe how to compute the Chen-Ruan cohomology of a toric orbifold given its moment polytope. Before that we explain briefly the construction of orbifold cohomology. We refer to \cite{adem2007orbifolds} for more details.

We think of orbifolds as (equivalence classes of) Lie groupoids $\G$. Recall that a Lie groupoid essentially consists of an object space $G_0$, a morphism space $G_1$ and source and target arrows $s,t: G_1\to G_0$ obeying certain conditions. We say that $\G$ is an étale groupoid if $s,t$ are local isomorphisms. The orbifold associated to $\G$ is to be interpreted as $G_0/\!\!\sim$ where $\sim$ is the equivalence relation defined by $s(g)\sim t(g)$ for $g\in G_1$. 

Two groupoids give the same orbifold if they are Morita equivalent. If we establish an analogy between groupoids as the underlying structure of an orbifold and atlases as the underlying structure of a manifold, then Morita equivalence is analogous to the existence of a common refinement of the atlases. Note that a non-étale groupoid can be Morita equivalent to an étale one, and we still say that such groupoid represents an orbifold. So orbifolds are Morita equivalence classes of orbifolds containing some étale groupoid. We denote by $|\G|$ the underlying topological space of the orbifold.

\subsection{Quotient orbifolds}

Let $X$ be a manifold and $G$ be a compact Lie group acting almost freely on $X$. Then $X/G$ is an orbifold with groupoid structure $\G=G\ltimes X$ where its object space is $X$, its morphism space is $G\times X$ and $s(g,x)=x, \, t(g,x)=gx$. We assume for simplicity that $G$ is abelian, which is the case we are interested in. Then the inertia groupoid of $\G$ is 
$$\Lambda \G=\bigsqcup_{g\in G}\G^g$$
where $\G^g=G\ltimes X^g$ (note that for $G$ abelian each conjugacy class has a unique element). The connected components $\G^g$ with $g\neq 0$ of the inertia groupoid are called the twisted sectors. Now the Chen-Ruan cohomology of $\G$ is defined as the (singular) cohomology of $\Lambda \G$ but with appropriate degree shifting in each twisted sector. More precisely
$$H^d_{orb}(\G)=\bigoplus_{g\in G}H^{d-2\iota_g}(|\G^g|)$$
for some degree shifting numbers $\iota_g\in \QQ$ (also called age grading in the literature). Note that $|\G^g|$ is the quotient topological space $X^g/G$; in particular if $X^g$ is empty then there is no contribution of $g$. We also note that the use of a non-étale groupoid does not affect in any way this construction except in the definition of the degree shifting numbers, since a Morita equivalence of groupoids induces a Morita equivalence of inertia groupoids and hence homeomorphisms between the underlying topological space of their components.

To define the degree-shifting numbers in this setup we have to get an orbifold local chart from this non-étale groupoid. Let $g\in G$ and $x\in X^g$. Take a slice $S$ at $x$. Then $(S, G_x, \pi: S\to S/G_x)$ is an orbifold chart around $x$. We assume that $S$ has an (almost) complex structure; note that this does not have to be inherited from $X$. Then $g$ acts infinitesimally on $T_x S\cong \CC^n$ and has eigenvalues $e^{2\pi i \lambda_1}, \ldots, e^{2\pi i \lambda_n}$ with $0\leq \lambda_j<1$. Now the degree shifting number is defined to be
$$\iota_g=\sum_{j=1}^n \lambda_j.$$
This is independent of the point $x\in X^g$ chosen.

\subsection{Toric orbifolds}
\label{appendix: toricorbifolds}

We now explain how to compute the orbifold cohomology in our cases of interest of toric orbifolds. We can consider for instance a compact toric symplectic orbifold $B$ with labeled moment polytope $\Delta$ as in section \ref{baseorbifold} or take $B=X_\Sigma$ and $\Delta=P$ as in section \ref{resolutions}. 

The construction of $B$ from 
$$\Delta=\{x\in \RR^{n}: \langle x, v_i \rangle+b_i\leq 0, \textup{ for } i=1, \ldots, d\}$$
(where, as in section \ref{baseorbifold}, the $v_i$ are the primitive vector times the corresponding label) is the following, as described in \cite{lerman1997hamiltonian}: consider the torus $\TT^d=\RR^d/\ZZ^d$ and its action on $\CC^d$ by
$$(c_1, \ldots, c_n)\cdot (z_1, \ldots, z_n)=\left(e^{2\pi i c_1}z_1, \ldots, e^{2\pi i c_d}z_d\right).$$
Consider the map $\beta: \TT^d\to \TT^n$ defined by $\beta(e_j)=v_j$ and let 
$$K=\ker \beta=\left\{\sum_{i=1}^d c_i e_i: \sum_{i=1}^d c_iv_i\in \ZZ^n\right\}\subseteq \TT^d.$$
Then $B$ is defined to be the symplectic reduction of $\CC^d$ by $K$. Hence $B$ is the orbifold with groupoid structure $K\ltimes Z$ where $Z=\mu_K^{-1}(b)$ is the pre-image of an appropriate point $b\in \mathfrak k^\ast$ in the dual of the Lie algebra of $K$. 

Now take $\kappa=\sum_{i=1}^d c_ie_i\in K$ and assume that $Z^\kappa\neq \emptyset$. Without loss of generality we assume that $c_1, \ldots, c_k\neq 0$ and $c_{k+1}=\ldots=c_n=0$. Then $z\in Z^\kappa$ if and only if $z_1=\ldots=z_k=0$, that is, if and only if $\mu_K([z])$ is in the face $F$ of $\Delta$ normal to $v_1, \ldots, v_k$ (in particular such face must be non-empty). The twisted sector $Z^\kappa/K$ is equivalent (up to a non-effective quotient) to the orbifold with moment polytope $F$, and hence its singular cohomology can be read from the combinatorics of $F$ using a Morse function as explained in \cite[section 3.3]{cannas2001xi}.

It remains to compute $\iota_\kappa$. Let $S\subseteq Z$ be a slice at $z$; concretely, if we assume the facets normal to $v_1, \ldots, v_n$ intersect at a vertex, one may take
$$S=Z\cap (\CC^n\times\RR^{d-n}).$$ We have $T_z Z=T_z S\oplus T_z(K\cdot z)$. Since $T_z Z$ is $\omega$-orthogonal to $T_z(K\cdot z)$ (see for instance \cite[section 23.3]{da2001lectures}) it follows that $T_zZ$ is orthogonal to $i T_z(K\cdot z)$. Hence
\begin{align*}T_z \CC^d&=T_z S\oplus T_z(K\cdot z)\oplus iT_z(K\cdot z)\\
&=T_z S\oplus (T_z(K\cdot z)\otimes \CC)\end{align*}
Now the infinitesimal action of $\kappa$ on $T_z\CC^d\cong \CC^d$ is given by the matrix $$\textup{diag}(e^{2\pi i c_1}, \ldots, e^{2\pi i c_k}, 1 \ldots, 1).$$
Moreover $\kappa$ acts trivially on $K\cdot z$ since $\kappa\cdot\lambda\cdot z=\lambda\cdot \kappa\cdot z=\lambda\cdot z$ for any $\lambda\in K$. Considering the splitting of $T_z\CC^d$ described above it follows that 
$$\iota_\kappa=\sum_{j=1}^k \{c_j\}.$$

\subsection{Formula in terms of Box sums}

The previous description of the orbifold cohomology of a toric orbifold can be packed efficiently in terms of a sum over the Box of the fan corresponding to $\Delta$, as in \cite[section 4]{stapledon}. We let $\Sigma$ be the (simplicial) fan associated to $\Delta$. The cones of $\Sigma$ are in bijection with faces of $\Delta$; more precisely, if $\tau$ is the cone with rays $v_1, \ldots, v_k$ then the corresponding face of $\Delta$ is
$$F=\{v\in P: \langle v, v_i\rangle +b_i=0\}$$
for some $a_1, \ldots, a_k$.

If $\tau\in \Sigma$ is a cone we define its $h$-polynomial as
$$h_\tau(q)=\sum_{\sigma\supseteq \tau} q^{\dim \sigma-\dim \tau}(1-q)^{\codim \sigma}.$$
Note that the $h$-polynomial of $\tau$ is the same as the $h$-polynomial of the polytope determined by the corresponding face $F$:
$$h_\tau(q)=h_F(q)\equiv \sum_{G\subseteq F}q^{\dim(F)-\dim(G)}(1-q)^{\dim(G)}.$$

 Given a cone $\tau\in \Sigma$ with rays $v_1, \ldots, v_k$ we define the box of $\tau$ to be the set
\begin{equation}
\label{eq: boxtau}
\bx(\tau)=\left\{\sum_{j=1}^k c_j v_j: 0<c_j<1\right\}\cap \ZZ^{n+1}.
\end{equation}
Given $\kappa\in\bx(\tau)$ we let
$$\psi(\kappa)=\sum_{j=1}^k c_j \in \QQ.$$

\begin{proposition}
\label{orbicohboxsum}
The orbifold Poincaré polynomial of $X_\Sigma$ is given by
$$\sum_{j\in \QQ}\dim H^{2j}_{orb}(X_\Sigma)q^j=\sum_{\tau\in \Sigma}h_\tau(q)\sum_{\kappa\in \bx(\tau)}q^{\psi(\kappa)}.$$
\end{proposition}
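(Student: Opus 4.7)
The plan is to combine the twisted-sector decomposition of Chen--Ruan cohomology developed in the previous subsection with the Danilov--Jurkiewicz-type formula for the Poincar\'e polynomial of a simplicial toric variety, in the spirit of Stapledon's treatment.

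First, I would parametrize the twisted sectors by Box elements. Writing $X_\Sigma = Z/K$ as in the preceding subsection, an element $g = \sum_{j=1}^{d} c_j e_j \in K$ (with representatives $0 \leq c_j < 1$) satisfies $Z^g \neq \emptyset$ precisely when the indices $\{j : c_j > 0\}$ label the rays of some cone $\tau \in \Sigma$; this is equivalent to the corresponding facets of the moment polyhedral set $P$ having common intersection. Moreover the condition $g \in K$ translates to $v := \sum_j c_j v_j \in \ZZ^{n+1}$, so $g \mapsto v$ yields a bijection
\[
\{g \in K : Z^g \neq \emptyset\} \;\longleftrightarrow\; \bigsqcup_{\tau \in \Sigma} \bx(\tau)
\]
(which includes $\tau = \{0\}$, $v = 0$ for the untwisted sector). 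Under this bijection the twisted sector $Z^g/K$ is identified with the toric subvariety $V(\tau) \subseteq X_\Sigma$, and the degree-shifting number computed above becomes $\iota_g = \sum_j \{c_j\} = \sum_j c_j = \psi(v)$.

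Second, I would identify the Poincar\'e polynomial of $V(\tau)$ with $h_\tau(q)$. The toric variety $V(\tau)$ has fan equal to the star of $\tau$ in $\Sigma$, viewed inside $\RR^{n+1}/\langle \tau \rangle$; since $|\Sigma|$ is the strongly convex cone over $D$, the star fan has strongly convex support, so $V(\tau)$ admits a proper map onto an affine toric variety and deformation retracts onto the compact preimage of its unique torus-fixed point. Standard toric-topology methods---for instance Morse theory on the moment image, or the Jurkiewicz--Danilov presentation for the rational cohomology of a simplicial toric variety---then yield
\[
\sum_j \dim H^{2j}(V(\tau); \QQ)\, q^j \;=\; \sum_{\sigma \supseteq \tau} q^{\dim \sigma - \dim \tau}(1-q)^{\codim \sigma} \;=\; h_\tau(q).
\]
Assembling the twisted-sector decomposition, the bijection, the degree-shift identification, and this Poincar\'e polynomial formula then delivers the stated identity. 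The main technical obstacle is justifying this last identity for $V(\tau)$ when it is non-compact (as happens when $\tau$ lies over a simplex on the boundary of $D$), which is what forces the deformation-retraction argument rather than a direct appeal to Danilov--Jurkiewicz on a complete toric variety.
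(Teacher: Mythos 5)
Your proposal is correct and follows essentially the same route as the paper: the paper's proof likewise assembles the bijection between twisted sectors and $\bigsqcup_{\tau}\bx(\tau)$, the identification of the sector for $v\in\bx(\tau)$ with the star toric variety, the equality of its Poincar\'e polynomial with $h_\tau(q)$ (there quoted from Stapledon's Lemma 4.1, which you instead sketch directly), and the degree shift $\iota_g=\psi(v)$.
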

\begin{proof}
The proof is essentially contained in \cite[section 4]{stapledon}, but is also easy after our description of the orbifold cohomology. We showed before that twisted sectors are in bijection with $\bx(\Sigma)=\bigsqcup_{\tau\in \Sigma} \bx(\tau)$ and the twisted sector corresponding to $\kappa\in \bx(\tau)$ is (up to a trivial quotient) $X_{\Sigma_\tau}$. By \cite[Lemma 4.1]{stapledon}, $h_\tau(q)$ is the Poincaré polynomial (the usual one, not  orbifold) of $X_{\Sigma_\tau}$ and as shown before $\psi(\kappa)$ is the shift associated to the twisted sector.
\end{proof}

\section{Generalizing Stapledon's theorem}
\label{appendixB}

In this appendix we explain how to adapt the proof of \cite[Theorem 4.6]{stapledon} to allow $D$ to be a rational polytope. This corresponds to the case where the toric contact manifold $(M, \xi)$ is $\QQ$-Gorenstein, i.e. $c_1(\xi)$ is torsion, and $c_1(X_{\Sigma})$ is torsion, where $X_{\Sigma}$ is the resolution of the symplectic cone of $M$ associated to a triangulation of $D$. 

Let $D\subseteq \RR^n$ be a polytope with rational vertices $v_1, \ldots, v_d\in \mathbb{Q}^n$. We consider also a rational triangulation $\T$ of $D$. Assume that $m\in \mathbb Z^+$ is an integer such that $mD$ and $m\T$ are an integral polytope and an integral triangulation, respectively. We let $\Sigma$ be the (stacky) fan over $\T$; $\Sigma$ has rays
$$\{\nu=(mv,m)\in \ZZ^{n+1}: v\in \T_0\}.$$

\begin{theorem}
Let $D$ be a rational toric diagram and $\T$ a rational triangulation of $D$. Assume that $m$ is such that $mD$ has integral vertices and $m\T$ is an integral triangulation. Let $\Sigma$ the fan over $\T$ and $X_\Sigma$ the toric variety with fan $\Sigma$. Then 
$$\dim H^{2j}_{\textup{orb}}(X_\Sigma; \QQ)=\delta_{mj}$$
for $j\in \frac{1}{m}\ZZ$, and the remaining orbifold cohomology groups are trivial. 
\label{stapledongeneral}
\end{theorem}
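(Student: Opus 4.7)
The plan is to adapt the proof of \cite[Theorem 4.6]{stapledon} by computing the Ehrhart series of $D$ directly from the fan $\Sigma$, with all occurrences of $(1-z)$ in the integral case replaced by $(1-z^m)$ to account for the fact that the primitive generators $\nu_i = (mv_i, m)$ of the rays of $\Sigma$ now live at height $m$ rather than $1$. Combined with Proposition~\ref{orbicohboxsum}, which expresses the orbifold Poincar\'e series of $X_\Sigma$ as the box-sum
\[
\sum_{j\in\QQ} \dim H^{2j}_{\mathrm{orb}}(X_\Sigma;\QQ)\,q^j = \sum_{\tau\in\Sigma} h_\tau(q)\sum_{v\in\bx(\tau)} q^{\psi(v)},
\]
the problem reduces to matching this with the $\delta$-polynomial of $D$.

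The first step is to enumerate the lattice points of the cone $\sigma$ over $D\times\{1\}$ using the refinement $\Sigma$. Every $w\in\sigma\cap\ZZ^{n+1}$ lies in the relative interior of a unique cone $\tau_w\in\Sigma$, and writing $w=\sum c_i\nu_i$ with $c_i>0$ for $\nu_i\in\tau_w$, we set $a_i=\lfloor c_i\rfloor$ and let $\tau\preceq\tau_w$ be the face generated by those $\nu_i$ with $f_i := c_i - a_i > 0$. Then $v:=\sum_{\nu_i\in\tau} f_i\nu_i \in \bx(\tau)$, and $w$ has the unique decomposition
\[
w = v + \sum_{\nu_i\in\tau} a_i\nu_i + \sum_{\nu_j\in\tau_w\setminus\tau} b_j\nu_j,\qquad a_i\in\ZZ_{\geq 0},\ b_j\in\ZZ_{\geq 1},
\]
with the convention $\bx(\{0\})=\{0\}$. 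Since $\mathrm{ht}(\nu_i)=m$ and $\mathrm{ht}(v)=m\psi(v)$, summing geometric series gives
\[
\mathrm{Ehr}_D(z) = \sum_{\tau\in\Sigma}\sum_{v\in\bx(\tau)} \frac{z^{m\psi(v)}}{(1-z^m)^{\dim\tau}} \sum_{\tau_w\supseteq\tau}\left(\frac{z^m}{1-z^m}\right)^{\dim\tau_w-\dim\tau}.
\]

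The second step is purely algebraic. Multiplying through by $(1-z^m)^{n+1}$ produces the $\delta$-polynomial $\delta_D(z) = \sum_k \delta_k z^k = (1-z^m)^{n+1}\mathrm{Ehr}_D(z)$, and regrouping exponents in the inner sum yields
\[
\delta_D(z) = \sum_{\tau\in\Sigma}\left(\sum_{v\in\bx(\tau)} z^{m\psi(v)}\right) \sum_{\tau_w\supseteq\tau}(1-z^m)^{n+1-\dim\tau_w}\,z^{m(\dim\tau_w-\dim\tau)}.
\]
Setting $q=z^m$, the rightmost factor is exactly the definition of $h_\tau(q)$, so
\[
\delta_D(z) = \sum_{\tau\in\Sigma} h_\tau(z^m)\sum_{v\in\bx(\tau)} z^{m\psi(v)}.
\]
Comparing with Proposition~\ref{orbicohboxsum} under the substitution $q=z^m$, this is the orbifold Poincar\'e series of $X_\Sigma$ written in the variable $z$. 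Reading off the coefficient of $z^k$ gives $\delta_k = \dim H^{2k/m}_{\mathrm{orb}}(X_\Sigma;\QQ)$, equivalently $\delta_{mj} = \dim H^{2j}_{\mathrm{orb}}(X_\Sigma;\QQ)$ for $j\in\frac{1}{m}\ZZ$. The vanishing of $H^{2j}_{\mathrm{orb}}(X_\Sigma;\QQ)$ for $j\notin\frac{1}{m}\ZZ$, as well as in odd integer degrees, is automatic from the box-sum formula: the $h_\tau(q)$ are polynomials in $q$ and every $\psi(v)$ lies in $\frac{1}{m}\ZZ_{\geq 0}$ because $v\in\ZZ^{n+1}$ and the $\nu_i$ all have last coordinate $m$.

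The main obstacle is the combinatorial bookkeeping in the first step: verifying that the decomposition is a bijection between $\sigma\cap\ZZ^{n+1}$ and the indexed set $\{(\tau,v,\tau_w,(a_i),(b_j))\}$. The subtle point is that a lattice point $w\in\mathrm{relint}(\tau_w)$ can have $f_i=0$ for some of the rays $\nu_i\in\tau_w$ (exactly when the corresponding $c_i$ is a positive integer), and one must see that these rays are precisely the ones in $\tau_w\setminus\tau$, forcing $b_j\geq 1$ and so producing the factor $z^m/(1-z^m)$ rather than $1/(1-z^m)$ in the generating function. Once this bijection is in place, the remainder of the argument is a direct manipulation of rational functions.
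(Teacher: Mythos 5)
Your proposal is correct and follows essentially the same route as the paper: both decompose each lattice point of the cone over $D$ uniquely as a box element of a face $\tau$ plus a non-negative integral combination of the rays of $\tau$ plus a strictly positive integral combination of the remaining rays of the carrier cone, sum the resulting geometric series to get the Ehrhart series, multiply by $(1-z^m)^{n+1}$ to recognize $\sum_\tau h_\tau(z^m)\sum_{v\in\bx(\tau)}z^{m\psi(v)}$, and compare with Proposition~\ref{orbicohboxsum}. Your explicit justification of why the rays outside $\tau$ carry coefficients $b_j\geq 1$ is a welcome elaboration of a point the paper leaves implicit.
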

\begin{proof}
We will prove the result by giving a formula for the Ehrhart series of $D$ in terms of a Box sum and then comparing with Proposition \ref{orbicohboxsum}.

For each vertex $v_j\in \frac{1}{m}\ZZ^n$ of $D$ let $\nu_j=(mv_j, m)$. Integral points in $tD\cap \ZZ^n$, for $t\in \mathbb Z_{\geq 0}$, are in bijection with integral points in the cone 
$$C=\{(tv, t): v\in D, t\geq 0\}=\mathbb R_{\geq 0}\langle \nu_1, \ldots, \nu_d\rangle$$
with last coordinate $t$.
If $\nu=(tv,t)\in C$ is an integral vector let $\theta\in \T$ be the face containing $v$ in its relative interior. Then there is a unique way to write $\nu$ as
$$\nu=\kappa+\sum_{\nu_j\in R(\theta)\setminus R(\eta)} \nu_j+\sum_{\nu_j\in R(\theta)}m_j \nu_j$$
where $\eta\subseteq \theta$, $\kappa\in \bx(\eta)$ and $m_j\in \mathbb{Z}_{\geq 0}$ for $\nu_j\in R(\theta)$. Above, we wrote $R(\theta)$ for the set of rays $\nu_j=(mv_j,m)$ for $v_j\in \theta_0\subseteq \T_0$ a vertex of $\theta$.
The last coordinate of such $\nu$ is
$$m\left(\psi(\kappa)+\dim\theta-\dim \eta+\sum_{\nu_j\in R(\theta)}m_j\right).$$
Hence the Ehrhart series of $D$ is
$$\sum_{\theta\in \Sigma} \sum_{\eta\subseteq \theta} \frac{q^{m(\dim \theta-\dim \eta)}}{(1-q^m)^{\dim(\theta)+1}}\sum_{\mu\in \bx(\eta)}q^{m\psi(\mu)}=\frac{1}{(1-q^m)^{n+1}}\sum_{\eta\in \Sigma}h_\eta(q^m)\sum_{\kappa\in \bx(\eta)}q^{m\psi(\kappa)}.$$
Comparing with Proposition \ref{orbicohboxsum} it follows that
\[\sum_{j\in \frac{1}{m}\ZZ}\delta_{mj}q^{mj}=(1-q^m)^{n+1}\Ehr_D(q)=\sum_{\eta\in \Sigma}h_\eta(q^m)\sum_{\kappa\in \bx(\eta)}q^{m\psi(\kappa)}=\sum_{j\in \frac{1}{m}\ZZ} \dim H^{2j}_{\textup{orb}}(X_\Sigma)q^{mj}\,\]
and we are done.\qedhere
\end{proof}

\subsection{Ehrhart series from star subdivision}

We will now prove a Lemma that provides a formula for the Ehrhart series of an integral polytope $D$ when we are given a rational point $w/r$ in the interior of $D$. This will be used in the proof of Theorem \ref{thm:quotients}. The statement and proof are slight variations to those of Theorem \ref{stapledongeneral}: we will consider the star subdivision centered at $w/r$ and obtain a formula by essentially the same argument. 

To state the formula we introduce some notation. Given a face $g\subseteq D$ with vertices $v_1, \ldots, v_k$, we denote by $\bx(g)$ the set
\begin{equation}\label{eq: boxg}\bx(g)=\left\{\sum_{j=1}^k c_j \nu_j+T (w,r)\colon 0<c_j<1\,,\, 0\leq T<1\right\}\cap \ZZ^{n+1}\end{equation}
where $\nu_j=(v_j, 1)$. Given $\kappa\in \bx(g)$ written as $\kappa=\sum_{j=1}^k c_j \nu_j+T (w,r)$, we denote
\[\psi(\kappa)=\sum_{j=1}^k c_j\quad,\quad T(\kappa)=T.\]

\begin{lemma}\label{lem: ehrhartstartriangulation}
Let $D$ be a toric diagram with integral vertices and let $w\in \ZZ^n$, $r\in \ZZ^+$ be such that $w/r\in \inte D.$ Then the Ehrhart series of $D$ is given by

\[\Ehr_D(q)=\frac{1}{1-q^r}\sum_{g\subseteq D}\left(\sum_{f\supseteq g}q^{\dim f-\dim g}(1-q)^{-\dim f-1}\right)\sum_{\kappa\in \bx(g)}q^{\psi(\kappa)+rT(\kappa)}\]
where the first sums run over faces $g\subseteq f$ of $D$.
\end{lemma}
\begin{proof}
For each face $f$ of $D$ with vertices $v_1, \ldots, v_\ell$ we let $D_f\subseteq D$ be the convex hull of the relative interior of $f$ together with $w/r$, that is,
\[D_f=\left\{\sum_{i=1}^\ell a_j v_j+a \frac{w}{r}\,: a_j>0\,,\, a\geq 0\,, \,\sum_{j=1}^\ell a_j+a=1\right\}.\]
The polytope $D$ is the disjoint union 
\[D=\bigsqcup_{f\subseteq D} D_f\,,\]
hence
\[\Ehr_D(q)=\sum_{f\subseteq D}\Ehr_{D_f}(q).\]
Integral points in $t D_f\cap \ZZ^n$ are in bijection with integral points in 
\[C_f=\{(t v, t): v\in D_f\,,t\geq 0\}\]
with last coordinate $t$. Such points can be written uniquely as
\[\nu=\kappa+\sum_{v_j\in V(f)\setminus V(g)} \nu_j+\sum_{v_j\in V(f)} m_j \nu_j+m(w,r)\]
for some face $g\subseteq f$, $\kappa\in \bx(g)$ and $m_j, m\in \ZZ_{\geq 0}$; here we denote by $V(g)$ the set of vertices of the face $g$. The last coordinate of such point is
\[\psi(\kappa)+rT(\kappa)+\dim(f)-\dim(g)+\sum_{v_j\in V(f)} m_j+rm.\]
It follows that
\[\Ehr_{D_f}(q)=\sum_{g\subseteq f}q^{\dim(f)-\dim(g)}(1-q)^{-\dim(f)-1}(1-q^r)^{-1}\sum_{\kappa\in \bx(g)} q^{\psi(\kappa)+rT(\kappa)}.\]

After summing over $f$ we obtain the statement of the Lemma.\qedhere
\end{proof}

\section*{Acknowledgement}

 	We are grateful to the referees for several suggestions that improved the exposition,
in particular to one of them for making us give an entirely self-contained and purely
combinatorial proof of Theorem \ref{thm:quotients}.

\bibliographystyle{plain}
\bibliography{bibliografiaEhrhart}

\end{document}